\documentclass[11pt]{amsart}

%\pdfoutput=1

\usepackage[text={420pt,650pt},centering]{geometry}

\usepackage{color}
\usepackage{esint,amssymb}
\usepackage{graphicx}
\usepackage{MnSymbol}
\usepackage{mathtools}
\usepackage{microtype}

\usepackage{bm}
\usepackage{scalerel} %for scaling the boxes 
\usepackage{dsfont}
\usepackage{stmaryrd}
\usepackage[font={footnotesize}]{caption}
\usepackage{xfrac}% for sfrac
\usepackage{comment} % for commenting out sections
\usepackage[colorlinks=true, pdfstartview=FitV, linkcolor=blue, citecolor=blue, urlcolor=blue,pagebackref=false]{hyperref}

\definecolor{darkgreen}{rgb}{0,0.5,0}
%\definecolor{darkblue}{rgb}{0,0,0.7}
\definecolor{darkred}{rgb}{0.9,0.1,0.1}

\newcommand{\mn}[1]{{\color{blue}{#1}}}

\newtheorem{proposition}{Proposition}
\newtheorem{theorem}[proposition]{Theorem}
\newtheorem{lemma}[proposition]{Lemma}
\newtheorem{corollary}[proposition]{Corollary}

\newtheorem{assumption}[proposition]{Assumption}
\theoremstyle{definition}
\newtheorem{remark}[proposition]{Remark}

\newtheorem{definition}[proposition]{Definition}

\newtheorem{question}[proposition]{Question}

\newcommand{\cref}[1]{Corollary~\ref{c.#1}}

\numberwithin{equation}{section}
\numberwithin{proposition}{section}
%\numberwithin{theorem}{document}

%%% function spaces
\newcommand{\A}{\mathcal{A}}

\renewcommand{\le}{\leqslant}
\renewcommand{\ge}{\geqslant}
\renewcommand{\leq}{\leqslant}
\renewcommand{\geq}{\geqslant}

\newcommand{\N}{\mathbb{N}}
\newcommand{\R}{\mathbb{R}}

\newcommand{\Rd}{{\mathbb{R}^d}}

\newcommand{\ep}{\varepsilon}
\newcommand{\eps}{\varepsilon}

\renewcommand{\a}{\mathbf{a}}
\newcommand{\h}{\mathbf{h}}
\newcommand{\g}{\mathbf{g}}

\newcommand{\f}{\mathbf{f}}
\newcommand{\n}{\mathbf{n}}

\newcommand{\T}{\mathbb{T}}
\renewcommand{\subset}{\subseteq}

\renewcommand{\a}{\mathbf{a}}
\renewcommand{\b}{\mathbf{b}}

  % change the first number to change the size of the bar

  % change the first number to change the size of the bar
  % change the first number to change the size of the bar

\renewcommand{\subset}{\subseteq}

\renewcommand{\fint}{\strokedint}

\newcommand{\Ll}{\left}
\newcommand{\Rr}{\right}

\DeclareMathOperator{\dist}{dist}

\DeclareMathOperator{\sign}{sign}
\DeclareMathOperator{\divg}{div}
\DeclareMathOperator{\supp}{supp}

\renewcommand{\bar}{\overline}

\renewcommand{\tilde}{\widetilde}
\newcommand{\td}{\widetilde}
\renewcommand{\div}{\divg}
\newcommand{\indc}{\mathds{1}}
\newcommand{\1}{\mathds{1}}

\newcommand{\al}{\alpha}

\newcommand{\ga}{\gamma}
\newcommand{\de}{\delta}
\newcommand{\si}{\sigma}

\newcommand{\hyp}{{\mathrm{hyp}}}
\newcommand{\kin}{{\mathrm{kin}}}

\newcommand{\la}{\langle}
\newcommand{\ra}{\rangle}

\newcommand{\omv}{\begin{pmatrix} 1 \\ v \end{pmatrix}}

\newcommand{\ltwog}{L^2_\gamma}

\newcommand{\Qvx}{Q_{v\cdot\nabla_x}^{\sfrac{1}{2}}}
\newcommand{\Qx}{Q_{\nabla_x}^{\sfrac{1}{3}}}
%%  commands for the Besov spaces - feel free to change this definition %%
\newcommand{\norm}[1]{\left\| #1 \right\|}
\newcommand{\p}{\partial}

\newcommand{\Qvxt}{Q_{D_t}^{\sfrac{1}{2}}}
\newcommand{\Qvxteps}{Q_{D_t}^{\sfrac{1}{2},\varepsilon}}
\newcommand{\epsthird}{\varepsilon^{-\sfrac{1}{3}}}
\newcommand{\epshalf}{\varepsilon^{\sfrac{1}{2}}}

%%% LABEL appearance on the side of the document
%%% COMMENT this for a non-draft version
%%Citation keys in blue, small on the side
%\providecommand*\showkeyslabelformat[1]{{\normalfont \tiny#1}}
%%\usepackage[notref,notcite,color]{showkeys}
%\definecolor{labelkey}{rgb}{0,0,1}

\begin{document}

\title[Variational methods for the kinetic Fokker-Planck equation]{Variational methods for the kinetic \\
Fokker-Planck equation}

\begin{abstract}
We develop a functional analytic approach to the study of the Kramers and kinetic Fokker-Planck equations which parallels the classical $H^1$ theory of uniformly elliptic equations. In particular, we identify a function space analogous to $H^1$ and develop a well-posedness theory for weak solutions in this space. In the case of a conservative force, we identify the weak solution as the minimizer of a uniformly convex functional. We prove new functional inequalities of Poincar\'e and H\"ormander type and combine them with basic energy estimates (analogous to the Caccioppoli inequality) in an iteration procedure to obtain the~$C^\infty$ regularity of weak solutions. We also use the Poincar\'e-type inequality to give an elementary proof of the exponential convergence to equilibrium for solutions of the kinetic Fokker-Planck equation which mirrors the classic dissipative estimate for the heat equation. Finally, we prove enhanced dissipation in a weakly collisional limit.
\end{abstract}

\author[D. Albritton]{D. Albritton} 
\address[D. Albritton]{Courant Institute of Mathematical Sciences, New York University, 251 Mercer St., New York, NY 10012}
%\address{Courant Institute of Mathematical Sciences, New York University, New York, NY 10012}
\curraddr{School of Mathematics, Institute for Advanced Study, 1 Einstein Dr., Princeton, NJ 08540, USA}
\email{dallas.albritton@ias.edu}

\author[S. Armstrong]{S. Armstrong}
\address[S. Armstrong]{Courant Institute of Mathematical Sciences, New York University, 251 Mercer St., New York, NY 10012}
\email{scotta@cims.nyu.edu}

\author[J.-C. Mourrat]{J.-C. Mourrat}
\address[J.-C. Mourrat]{ENS Lyon, CNRS, 46 all\'ee d'Italie, 69007 Lyon, France; Courant Institute of Mathematical Sciences, New York University, 251 Mercer St., New York, NY 10012}
\email{jean-christophe.mourrat@ens-lyon.fr}

\author[M. Novack]{M. Novack}
\address[M. Novack]{Courant Institute of Mathematical Sciences, New York University, 251 Mercer St., New York, NY 10012}
\curraddr{School of Mathematics, Institute for Advanced Study, 1 Einstein Dr., Princeton, NJ 08540, USA}
\email{mdn@ias.edu}

\keywords{kinetic Fokker-Planck equation, hypoelliptic equation, hypoelliptic diffusion, Poincar\'e inequality, convergence to equilibrium}
\subjclass[2010]{35H10, 35D30, 35K70}
\date{\today}

\maketitle
\setcounter{tocdepth}{1}
\tableofcontents

\section{Introduction}

\subsection{Motivation and informal summary of results}

In this paper, we develop a well-posedness and regularity theory for weak solutions of the hypoelliptic equation 
\begin{equation}
\label{e.thepde}
-\Delta_v f + v \cdot \nabla_v f  + v \cdot \nabla_x f + \b\cdot \nabla_v f =  f^* \quad \mbox{in} \ {\T^d} \times \Rd \, .
\end{equation}
The unknown function $f(x,v)$ is a function of the position variable $x\in \T^d$ and the velocity variable $v\in\Rd$. The PDE \eqref{e.thepde} is sometimes called the \emph{Kramers equation}. We also consider the time-dependent version of this equation, namely 
\begin{equation}
\label{e.thepde.witht}
\partial_t f -\Delta_v f + v \cdot \nabla_v f + v \cdot \nabla_x f + \b\cdot \nabla_v f =   f^* \quad \mbox{in}  \ 
(0,\infty) \times \T^d \times\Rd \, ,
\end{equation}
which is often called the \emph{kinetic Fokker-Planck equation}.

\smallskip

These equations were first studied by Kolmogorov~\cite{K} and were the main motivating examples for the general theory of H\"ormander~\cite{H} of hypoelliptic equations. They are of physical interest due to their relation with the \emph{Langevin diffusion process} formally defined by
\begin{equation}
\label{e.newton}
\ddot X =  \b(X) - \dot X + \dot B \, ,
\end{equation}
where $\dot X$, $\ddot X$ stand respectively for the first and second time derivatives of $X$, a stochastic process taking values in $\Rd$, and $\dot B$ denotes a white noise process. Equation~\eqref{e.newton} can be interpreted as Newton's law of motion for a particle subject to the force field $\b(X)$, friction and thermal noise. This process can be recast as a Markovian evolution for the pair $(X,V)$ evolving according to
\begin{equation*}  %\label{e.}
\Ll\{
\begin{aligned}  %\label{}
\dot X & =  - V, \\
\dot V & = - \b(X) - V - \dot B.
\end{aligned}
\Rr.
\end{equation*}
The infinitesimal generator of this Markov process is the differential operator appearing on the left side of \eqref{e.thepde}.

\smallskip

Kolmogorov~\cite{K} gave an explicit formula for the fundamental solution of~\eqref{e.thepde.witht} in the case $\b = 0$ and $U = \Rd$, which gives the existence of smooth solutions of~\eqref{e.thepde} and~\eqref{e.thepde.witht} and implies that the operators on the left sides of~\eqref{e.thepde} and~\eqref{e.thepde.witht} are \emph{hypoelliptic}---that is, if~$f$ is a distributional solution of either of these equations and~$f^*$ is smooth, then~$f$ is also smooth. This result is extended to more general equations in H\"ormander's celebrated paper~\cite{H}, where he gave an essentially complete classification of hypoelliptic operators. In the case of the particular equations~\eqref{e.thepde} and~\eqref{e.thepde.witht}, his arguments yield a more systematic proof of Kolmogorov's results and, in particular, interior regularity estimates. 

\smallskip

The study of hypoelliptic equations often falls back on the theory of pseudodifferential operators; see for example Kohn's proof \cite{Kohnsproof} of H\"ormander's classical result \cite{H}, which H\"ormander includes in his monograph \cite{hormandervol3}.
%\footnote{\red{This sentence replaced the sentence ``H\"ormander's analysis of hypoelliptic equations relies on the theory of pseudodifferential operators and is somewhat distinct from the variational theory of elliptic equations, which begins with the construction of weak solutions in the function space~$H^1$."}}
The purpose of this paper is rather to present a functional analytic and variational theory for~\eqref{e.thepde} and~\eqref{e.thepde.witht} which has strong analogies to the familiar theory of uniformly elliptic equations. In particular, in this paper we: 
\begin{itemize}
\item identify a function space $H^1_\hyp$ based on the natural energy estimates and develop a notion of weak solutions in this space;
\item prove functional inequalities for $H^1_\hyp$, for instance a Poincar\'e-type inequality, which implies uniform coercivity of our equations and holds not just on the spatial domain $\T^d$ but on any $C^1$ domain; 
\item develop a well-posedness theory of weak solutions based on the minimization of a uniformly convex functional;
\item develop a regularity theory for weak solutions, based on an iteration of energy estimates, which implies that weak solutions are smooth; 
\item prove dissipative estimates for solutions of~\eqref{e.thepde.witht}, using the coercivity of the variational structure, which imply an exponential decay to equilibrium.
\end{itemize}
Such a theory has until now remained undeveloped, despite the attention these equations have received in the last half century. The definition of the space~$H^1_\hyp$ is not new: it and variants of it have been studied previously in the works~\cite{BG,PV,Car}. However, the functional inequalities and other key properties which are required to work with this space are established here.  A robust notion of weak solutions and corresponding well-posedness theory---besides allowing one to prove classical 
results for~\eqref{e.thepde} and~\eqref{e.thepde.witht} in a different way---is important because it provides a natural framework for studying the stability of solutions (i.e., proving that a sequence of approximate solutions converges to a solution). In fact, it is just such an application---namely, developing a theory of homogenization for~\eqref{e.thepde.witht}---which motivated the present work. Furthermore, we expect that the theory developed here will provide a closer link between the hypoelliptic equations~\eqref{e.thepde} and~\eqref{e.thepde.witht} and the classical theory of uniformly elliptic and parabolic equations, allowing, for example, for a more systematic development of regularity estimates for solutions of the former by analogy to the latter. For instance, it would be interesting to investigate a possible connection between the functional-analytic framework proposed in this paper and the recent works~\cite{WZ1,WZ2,GIMV,Mouh} which develop De Giorgi-Nash-type H\"older estimates for generalizations of the kinetic Fokker-Planck equations with measurable coefficients.\footnote{We refer to works of Guerand and Imbert \cite{guerand2021logtransform} and  Anceschi and Rebucci \cite{anceschi2021note}, which appeared after the first version of the present paper.} 

\smallskip

In the first part of the paper, we address the well-posedness of \eqref{e.thepde} under a weak formulation based on the Sobolev-type space~$H^1_\hyp(\T^d)$, defined below in~\eqref{e.def.h1hypnorm}. In the case in which~$\b$ is a potential field, we provide two proofs of well-posedness.  The first relies on the abstract Lax-Milgram theorem, while the second identifies a \emph{uniformly convex} functional that has the sought-after weak solution as its unique minimizer. The identification of the correct convex functional is inspired by previous work of Br\'ezis and Ekeland \cite{BE1,BE2} on variational formulations of parabolic equations (see also the more recent works~\cite{GBook,ABM} and the references therein). The proof that our functional is {coercive} relies on a new Poincar\'e-type inequality for $H^1_\hyp$, see Theorem~\ref{t.hypoelliptic.poincare} below. The Poincar\'e inequality in fact holds in a much more general setting than the periodic setting in which we consider \eqref{e.thepde}.
% The well-posedness of weak solutions for general (i.e., non-potential) bounded vector fields~$\b$ is then obtained by a fixed point argument, using a maximum principle and a compact embedding of the space~$H^1_\hyp$ into~$L^2_xL^2_v$, in close analogy to the uniformly elliptic setting. 
Our convex-analytic arguments for well-posedness can be immediately adapted to cover non-linear equations such as those obtained by replacing $\Delta_v f$ in \eqref{e.thepde} with $\nabla_v \cdot(\a(x,v,\nabla_v f))$, for $p \mapsto \a(x,v,p)$ a Lipschitz and uniformly maximal monotone operator (uniformly over $x \in \T^d$ and $v \in \Rd$). 

\smallskip

Roughly speaking, the norm~$\left\| \cdot\right\|_{H^1_\hyp(U)}$ is a measure of the size of the vector fields $\nabla_v f$ and $v \cdot \nabla_x f$, but crucially, the former is measured in a strong~$L^2_xL^2_v$-type norm and the latter in a weaker~$L^2_xH^{-1}_v$-type norm (see~\eqref{e.def.h1hypnorm} below). The importance of measuring the vector fields~$\nabla_v f$ and~$v\cdot \nabla_x f$ using different norms also features prominently in other works including~\cite{Bo}, but only spaces of positive regularity are considered there. Measuring the term~$v\cdot \nabla_x f$ in a space of negative regularity in the~$v$-variable is related to the idea of \emph{velocity averaging}, the idea that one should expect better control of the spatial regularity of a solution of~\eqref{e.thepde} or~\eqref{e.thepde.witht} after averaging in the velocity variable. This concept is therefore wired into the definition of the~$H^{1}_\hyp$ norm, allowing us to perform velocity averaging in a systematic way. Once we have proved the existence of weak solutions to~\eqref{e.thepde} in $H^1_\hyp$, we are interested in showing that these solutions are in fact smooth. It is elementary to verify that the differential operators $\nabla_v$ and $v\cdot \nabla_x$ satisfy H\"ormander's bracket condition, and therefore, as exposed in~\cite{H}, a control of both~$\nabla_v f$ and~$v\cdot \nabla_x f$ in~$L^2_xL^2_v$ would yield control of the seminorm of the function~$f$ in a fractional Sobolev space of positive regularity, namely~$H^{\sfrac 12}_x L^2_v$. However, since the natural definition of the function space~$H^1_\hyp(U)$ provides us only with control of~$v\cdot \nabla_x f$ in a space of \emph{negative} regularity in~$v$, we are forced to revisit the arguments of \cite{H}. A key step there is an interpolation-type inequality which converts the $L^2_x H^{-1}_v$ control on $v\cdot\nabla_x f$ (i.e., ``velocity averaged'' regularity) and $L^2_x H^1_v$ regularity on $f$ into $L^2_xL^2_v$ regularity for a type of ``fractional derivative" $(v\cdot\nabla_x)^{\sfrac{1}{2}}f$.\footnote{{The analogous estimate for the heat equation is $f\in H^{\sfrac{1}{2}}_t L^2_x$.}} With this interpolation in hand, we then prove a functional inequality (see Theorem~\ref{t.hormander} below) which asserts that the~$H^1_\hyp(U)$ norm controls exactly one-third of a derivative in arbitrary~$x$-directions in the space $L^2_xL^2_v$ in a weaker (Besov) sense, and almost one-third of a derivative in a stronger (Sobolev) sense. The one-third exponent is identical to that in H\"ormander's paper and is sharp. \footnote{{When translating H\"ormander's work~\cite{H} into the present setting, the vector field is $X_0=\partial_t + v\cdot\nabla_x$, and for simplicity we consider the ``flat case" in which $X_1=\nabla_v$. The regularity along $X_0$ is of index $\sfrac{1}{2}$, while the regularity along $X_1$ is of index $1$.  Then H\"ormander's Theorem 4.3 gives regularity along the commutator $\nabla_x = [X_1,X_0]$ of index $\sfrac{1}{3}$, since $\frac{1}{\sfrac{1}{3}}=\frac{1}{1}+\frac{1}{\sfrac{1}{2}}$. In addition, the exponent $\sfrac{1}{3}$ arises naturally in the following way: consider $\p_t f + v \cdot \nabla_x f - \varepsilon \Delta_v f = 0$ on $\R_+ \times \R^d \times \R^d$. Dimensionally speaking, $[f] = M$, $[x] = L$, $[v] = L/T$, and $[\varepsilon] = L^2/T^3$. The above PDE has a two-parameter scaling symmetry which keeps $\varepsilon$ fixed, namely, $f \to \rho f(\lambda^{\sfrac{2}{3}} t, \lambda x, \lambda^{\sfrac{1}{3}} v)$, $\lambda, \rho > 0$. Here, $\varepsilon$ is considered ``dimensionless": $[\varepsilon] = 1$, that is, we identify $L^2 \sim T^3$. In this convention, the unique exponent $\alpha$ for which $\| (-\Delta)^{\alpha/2}_x f \|_{L^2_{t,x,v}}$ has the same dimensions as $\| \nabla_v f \|_{L^2_{t,x,v}}$ is $\alpha = \sfrac{1}{3}$. Furthermore, the ``flat case" is the formal limit of~\eqref{e.thepde.witht} upon ``zooming in."}}
\smallskip

Once we have proved that an arbitrary $H^1_\hyp$ function possesses at least a fractional derivative in the~$x$ variable, we are in a position to iterate the estimate by repeatedly differentiating the equation a fractional number of times to obtain higher regularity (and eventually smoothness, under appropriate assumptions on~$\b$ and $f^*$) of weak solutions. In order to perform this iteration, we again depart from the original arguments of~\cite{H} and subsequent treatments and rely on an appropriate version of the Caccioppoli inequality (i.e., the basic $L^2$ energy estimate) for the equation~\eqref{e.thepde}. This avoids any recourse to {sophisticated} pseudodifferential operators and once again mimics the classical functional analytic arguments in the uniformly elliptic setting. 

\smallskip

The developments described above and even the variational structure identified for the equation~\eqref{e.thepde} are not restricted to the time-independent setting. Indeed, we show that they can be adapted in a very straightforward way to the kinetic Fokker-Planck equation~\eqref{e.thepde.witht}, the main difference being that the first-order part in a ``sum-of-squares'' representation of the differential operator is now~$\partial_t + v\cdot \nabla_x$ instead of just~$\,v\cdot \nabla_x$. The adaptation thus consists in replacing the latter by the former throughout; the natural function space associated with equation~\eqref{e.thepde.witht}, denoted by~$H^1_{\kin}$, is defined in~\eqref{e.H1kin.def}--\eqref{e.H1kin.norm.def}. We also prove a Poincar\'e inequality for functions in~$H^1_\kin$ which implies the uniform coercivity of the variational structure with respect to the $H^1_\kin$ norm. This allows us to give a rather direct and natural proof of exponential long-time decay to equilibrium for solutions of~\eqref{e.thepde.witht} with constant-in-time right-hand sides.
This result (stated in Theorem~\ref{t.hypo.equilibrium} below) can be compared with the celebrated results of exponential convergence to equilibrium for kinetic Fokker-Planck equations on~$\Rd$ with confining potentials, see in particular \cite{DV,HerauN,HN,EH2,DV2,V,baudoin}, as well as \cite{Talay0,Talay} and references therein for a probabilistic approach. Compared to previous approaches, our proof of exponential convergence is once again closer to the classical dissipative argument for the heat equation based on differentiating the square of the spatial~$L^2$ norm of the solution. Informally, 
our method is based on the idea that hypocoercivity is simply coercivity with respect to the correct norm.

\subsection{Statements of the main results}

We begin by introducing the Sobolev-type function space $H^1_\hyp$ associated with the equation~\eqref{e.thepde}. We let $U\subset\R^d$ either be a bounded $C^1$ domain with boundary, or we consider the boundary-less settings of $\R^d$ itself or the torus $\T^d$ with periodic boundary conditions. {While we do not prove unique solvability in $H^1_\hyp$ of the Dirichlet problem in bounded $C^1$ domains, we nonetheless can prove the Poincar\'e inequality, so we study the two settings (with and without boundary) in tandem.} We denote by~$\gamma$ the standard Gaussian measure on~$\Rd$, defined by
\begin{equation}  
\label{e.def.gamma}
d\gamma(v) := \left( 2\pi \right)^{-\frac d2} \exp\Ll(-\frac 1 2 |v|^2\Rr) \, dv \, .
\end{equation}
For each $p\in [1,\infty)$, we denote by $L^p_\gamma := L^p(\Rd,d\gamma)$ the Lebesgue space with norm
\begin{equation*}
\left\| f \right\|_{L^p_\gamma} 
:= 
\left( \int_{\Rd} \left| f(v) \right|^p \,d\gamma(v)  \right)^{\sfrac 1p}\, ,
\end{equation*}
and by $H^1_\gamma$ the Banach space with norm
\begin{equation*} %\label{}
\left\| f \right\|_{H^1_\gamma} 
:= 
\left( \left\| f \right\|_{L^2_\gamma}^2 + \left\| \nabla f \right\|_{L^2_\gamma}^2 \right)^{\sfrac 12}.
\end{equation*}
The dual space of $H^1_\gamma$ is denoted by $H^{-1}_\gamma$. By abuse of notation, we typically denote the canonical pairing~$\left\langle \cdot,\cdot\right\rangle_{H^1_\gamma,H^{-1}_\gamma}$ between $f\in H^1_\gamma$ and $f^*\in H^{-1}_\gamma$ by
\begin{equation}
\label{e.def.dualitypairing}
\int_{\Rd} f f^* \,d\gamma  := \left\langle f, f^* \right\rangle_{H^1_\gamma,H^{-1}_\gamma}. 
\end{equation}

Concerning the vector field $\b$, we shall often make the following assumption. Throughout the rest of the paper, we shall remind the reader when this assumption is in effect, or when we take more general vector fields $\b$. 
\begin{assumption} %[Confining potential]  
\label{a.conserv}
There exists $W \in C^{0,1}(U;\R)$ such that $\b(x) = - \nabla W(x)$ for almost every $x \in U$.
\end{assumption}
Under the above assumption, we denote by $d\sigma$ the measure on $U$ defined by
\begin{equation} 
\label{e.def.sigma}
d\sigma(x) := \exp(-W(x)) \, dx
\end{equation}
and by $dm$ the measure on $U \times \Rd$ defined by
\begin{equation}  
\label{e.def.m}
dm(x,v) := d\sigma(x) \, d\gamma(v) = \exp \Ll( -W(x) - \frac 1 2 |v|^2 \Rr) \, dx \, dv \, .
\end{equation}
A consequence of this definition and integration by parts is the equality
\begin{equation}\label{e.ibp.B}
    \iint_{\T^d\times\R^d} \left( v\cdot\nabla_x f(x,v) + \b(x)\cdot\nabla_v f(x,v) \right) \,dm = 0
\end{equation}
for all smooth $\T^d$-periodic functions $f$.
\smallskip

Given~$p\in[1,\infty)$, $U\subset\R^d$ and an arbitrary Banach space $X$, we denote by $L^p(U;X)$ the Banach space consisting of measurable functions $f:U \to X$ with norm
\begin{equation*}  %\label{e.}
\|f\|_{L^p(U;X)} := \Ll( \int_{U} \|f(x,\cdot)\|_{X}^p \, dx \Rr)^{\sfrac 1p} \, .
\end{equation*}
It will occasionally be convenient to consider the space $L^p_\sigma(U;X)$, which contains functions for which the norm
\begin{equation*}  %\label{e.}
\|f\|_{L^p_\sigma(U;X)} := \Ll( \int_{U} \|f(x,\cdot)\|_{X}^p \, d\sigma \Rr)^{\sfrac 1p} \, 
\end{equation*}
is finite.  Notice that, on bounded domains, the above norms induced by $dx$ and $d\sigma$ are equivalent under Assumption~\ref{a.conserv}.
\smallskip

We define the space $H^1_\hyp(U)$ by
\begin{equation}  
\label{e.def.H1hyp}
H^1_\hyp(U) := \Ll\{ f \in L^2\left(U;H^1_\gamma\right) \ : \ v \cdot \nabla_x f \in L^2(U;H^{-1}_\gamma) \Rr\} 
\end{equation}
and equip it with the norm
\begin{equation}
\label{e.def.h1hypnorm}
\left\| f \right\|_{H^1_\hyp(U)} 
:=
\left( \left\| f \right\|_{L^2(U;H^1_\gamma)}^2
+ 
\left\| v \cdot \nabla_x f \right\|_{L^2(U;H^{-1}_\gamma)}^2 \right)^{\sfrac 12}.
\end{equation}
When $\b$ satisfies Assumption~\ref{a.conserv}, it is natural to define the $H^1_\hyp$ norm with $\| v \cdot \nabla_x f + \b \cdot \nabla_v f \|_{L^2_\sigma(U;H^{-1}_\gamma)}$ replacing $\| v \cdot \nabla_x f \|_{L^2(U;H^{-1}_\gamma)}$ in~\eqref{e.def.H1hyp}. The two norms are evidently equivalent on a bounded domain.

Given a bounded domain $U\subseteq\Rd$ and a vector field $\b\in L^\infty(U\times \Rd)^d$, we say that a function $f \in H^1_\hyp(U)$ is a \emph{weak solution of~\eqref{e.thepde} in~$U\times \Rd$} if
\begin{equation*}
\forall h\in L^2(U;H^1_\ga), \qquad 
\int_{U\times\Rd} \nabla_v h \cdot \nabla_v f 
\, dx \, d\gamma
=
\int_{U \times \Rd} 
h \left( f^* - v \cdot \nabla_x f - \b \cdot \nabla_v f \right) \, dx \, d\gamma \, .
\end{equation*}
As in \eqref{e.def.dualitypairing}, the precise interpretation of the right side is
\begin{equation}
\label{e.def.dualitypairing.2}
\int_{U} 
\left\langle h(x,\cdot) , \left( f^* - v \cdot \nabla_x f - \b \cdot \nabla_v f \right)(x,\cdot) \right\rangle_{H^1_\gamma,H^{-1}_\gamma} \,dx \, . 
\end{equation}

As mentioned previously, we assume throughout that the domain $U\subset\R^d$ is bounded and has a $C^1$ boundary, or that $U=\T^d$ with periodic boundary conditions or $U=\R^d$. In the case $U\neq \T^d,\R^d$, we denote by $\n_U$ the outward-pointing unit normal to $\partial U$ and define the \emph{hypoelliptic boundary} of $U$ by
\begin{equation}\notag
    \partial_\hyp U := \left\{ (x,v)\in\partial U \times \R^d \, : \,  v\cdot\n_U(x) < 0 \right\} \, .
\end{equation}
We denote by $H^1_{\hyp,0}(U)$ the closure in $H^1_{\hyp}(U)$ of the set of smooth functions with compact support in $\overline{U}\times\R^d$ which vanish on $\partial_\hyp U$.
\smallskip

We give a first demonstration that $H^1_\hyp(U)$ is indeed the natural function space on which to build a theory of weak solutions of~\eqref{e.thepde} by presenting a well-posedness result for the Kramers equation. 
% \mn{Rewrite a bunch of this ... } To the best of our knowledge, this result is new in dimensions larger than one, even in the special case in which the vector field~$\b$ and right-hand side~$f^*$ vanish, the domain~$U$ is a ball and the boundary data~$f_0$ is smooth. As mentioned above, in the case~$d = 1$, a weak solution theory for a PDE analogous to~\eqref{e.thepde} was previously developed in \cite{BG}. There have also been previous results for Dirichlet problems associated with certain hypoelliptic equations, including the works of Ole\u{\i}nik and Radkevi\v{c} \cite{Oleinik,ORbook} and of Kohn and Nirenberg~\cite{KN1,KN2}, but their assumptions do not cover the case of equations such as~\eqref{e.thepde}, since they would require that the singular set be disjoint from the closure of the set~$\{(x,v) \in \partial U \times \Rd \ : \ v \cdot \n_U(x) < 0 \}$.
\begin{theorem}
[Well-posedness of the Kramers equation]
\label{t.hypo.DP}
Let $\b$ satisfy Assumption~\ref{a.conserv}, and let $f^*\in  L^2(\T^d;H^{-1}_\gamma)$ be such that {$\iint_{\T^d\times\R^d} f^*(x,v) \, dm = 0$}. Then there exists a unique weak solution $f \in H^1_\hyp(\T^d)$ to the Kramers equation 
\begin{equation}
\label{e.DP}
\begin{aligned}
& -\Delta_v f + v \cdot \nabla_v f + v \cdot \nabla_x f + \b\cdot \nabla_v f = f^*
& \mbox{in} & \ \T^d \times \Rd\, 
\end{aligned}
\end{equation}
with $\iint_{\T^d\times\R^d} f(x,v) \,dm = 0$. Furthermore, there exists a constant $C(\b,d)<\infty$ such that~$f$ satisfies the estimate
\begin{equation}
\label{e.DP.est}
\left\| f \right\|_{H^1_\hyp(\T^d)} 
\leq 
C \left\| f^* \right\|_{L^2(\T^d;H^{-1}_\gamma)} \, .
\end{equation}
\end{theorem}

We next give an informal discussion regarding how one could naively guess that~$H^1_\hyp$ is the ``correct'' space for solving~\eqref{e.thepde}, and how our proof of Theorem~\ref{t.hypo.DP} will work. We take the simpler case of matrix inversion in finite dimensions as a starting point. Given two matrices~$A$ and~$B$ with~$B$ skew-symmetric and a vector $f^*$, consider the problem of finding $f$ such that 
\begin{equation}  
\label{e.matrix.pb}
(A^* A + B)f = f^*,
\end{equation}
where $A^*$ denotes the transpose of $A$. We propose to approach this problem
by looking for a minimizer of the functional
\begin{equation*}  %\label{e.}
f \mapsto \inf \Ll\{ \frac 1 2 (Af - \g, Af-\g) \ : \ \g \ \text{ such that } \ A^*\g = f^* - Bf \Rr\} ,
\end{equation*}
where $(\cdot,\cdot)$ denotes the underlying scalar product. It is clear that the infimum is non-negative, and if $f$ is a solution to \eqref{e.matrix.pb}, then choosing $\g = Af$ shows that this infimum is actually zero (null). Moreover, since $B$ is skew-symmetric, whenever~$(f,\g)$ satisfy the constraint in the infimum above, we have
\begin{equation} 
\label{e.matrix.functional}
\frac 1 2 (Af - \g, Af-\g) = \frac 1 2 (Af, Af) + \frac 1 2 (\g,\g) - (f,f^*).
\end{equation}
The latter quantity is clearly a convex function of the pair $(f,\g)$. The point is that under very mild assumptions on $A$ and $B$, it will in fact be \emph{uniformly} convex on the set of pairs~$(f,\g)$ satisfying the (linear) constraint $A^*\g = f^* - Bf$. Informally, the functional in~\eqref{e.matrix.functional} is coercive with respect to the seminorm $(f,\g) \mapsto |Af| + |\g| + |A (A^* A)^{-1} B f|$.

\smallskip

With this analogy in mind, and assuming that~$\b$ vanishes for simplicity, we rewrite the problem of finding a solution to \eqref{e.thepde} (with $\b \equiv 0$) as that of finding a null minimizer of the functional
\begin{equation}  
\label{e.mapping.inf}
f \mapsto \inf \Ll\{ \int_{\T^d\times \Rd} \frac 1 2 |\nabla_v f - \g|^2 \, dx \, d\ga \ : \ \nabla_v^* \g = f^* - v \cdot \nabla_x f \Rr\} ,
\end{equation}
where $\nabla_v^* F := -\nabla_v \cdot F + v \cdot F$ is the formal adjoint of $\nabla_v$ in $L^2_\ga$. It is clear that the infimum above is non-negative, and if we are provided with a solution $f$ to \eqref{e.thepde} (with $\b \equiv 0$), then choosing $\g = \nabla_v f$ reveals that this infimum vanishes at $f$. This functional gives strong credence to the definition of the space $H^1_\hyp(U)$ given in \eqref{e.def.H1hyp}. Using convex-analytic arguments, we show that the mapping in \eqref{e.mapping.inf} is uniformly convex, and that its infimum is null. This implies the well-posedness of the problem~\eqref{e.thepde} with $\b \equiv 0$. The proof of coercivity relies on the following Poincar\'e-type inequality for $H^1_\hyp(U)$.

\smallskip

For every $f \in L^1(U;L^1_\ga)$, we denote $(f)_U := |U|^{-1} \int_{U\times \Rd} f(x,v) \, d\sigma(x) \, d\ga(v)$. For the purposes of the Poincar\'e inequality, we may set $U=\T^d$, or $U\subset\R^d$ a general $C^1$ domain. See Proposition~\ref{pro:poincarewithconfining} and~\cite{cao2019explicit} for an extension to the case $U=\R^d$ with a confining potential.

\begin{theorem}[Poincar\'e inequality for $H^1_\hyp$]
\label{t.hypoelliptic.poincare}
For $U=\T^d$ or $U\subset\R^d$ a general bounded $C^1$ domain, there exists a constant $C(U,d)<\infty$ such that for every $f \in H^1_{\hyp}(U)$, we have
\begin{equation} 
\label{e.poincare.mean}
\left\| f -(f)_U \right\|_{L^2(U;L^2_\gamma)} 
\leq
C \left( \left\| \nabla_v f \right\|_{L^2(U;L^2_\gamma)} 
+
\left\| v \cdot \nabla_x f \right\|_{L^2(U;H^{-1}_\ga)}
\right) \, .
\end{equation}
Moreover, if in addition $f\in H^1_{\hyp,0}(U)$, then we have
\begin{equation} 
\label{e.poincare.zerobndry}
\left\| f \right\|_{L^2(U;L^2_\gamma)} 
\leq
C \left( \left\| \nabla_v f \right\|_{L^2(U;L^2_\gamma)} 
+
\left\| v \cdot \nabla_x f \right\|_{L^2(U;H^{-1}_\ga)} \right) \, .
\end{equation}
\end{theorem}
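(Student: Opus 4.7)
The strategy is to combine the classical Gaussian Poincar\'e inequality in the~$v$ variable with a duality argument in~$x$ that extracts spatial control from the transport term~$v\cdot\nabla_x f$. For $f\in H^1_\hyp(U)$, let $g(x) := \int_{\Rd} f(x,v)\,d\gamma(v)$ and write
\[
f - (f)_U = (f-g) + (g - (g)_U),
\]
noting that $(g)_U = (f)_U$ since $\gamma$ is a probability measure. Applying the standard Gaussian Poincar\'e inequality at each fixed~$x$ gives $\|f-g\|_{L^2(U;L^2_\gamma)} \le \|\nabla_v f\|_{L^2(U;L^2_\gamma)}$, so it remains to bound $\|g - (g)_U\|_{L^2(U)}$ by the right-hand side of~\eqref{e.poincare.mean}.

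I would obtain this bound by duality. Given any $\phi \in L^2(U)$ with $\int_U \phi = 0$, Bogovski\u{\i}'s solution of the divergence equation on the Lipschitz domain~$U$ yields $\Phi \in H^1_0(U)^d$ with $\nabla_x\cdot\Phi = \phi$ and $\|\Phi\|_{H^1(U)} \le C\|\phi\|_{L^2(U)}$. Set $h(x,v) := v\cdot\Phi(x)$; since $\int v_iv_j\,d\gamma = \delta_{ij}$, a direct computation yields $\|h\|_{L^2(U;H^1_\gamma)} \le C\|\phi\|_{L^2(U)}$, so $h$ is admissible as a test function against $v\cdot\nabla_x f$. An integration by parts in~$x$, with no boundary contribution since $\Phi$ vanishes on $\partial U$, gives
\[
\int_U \!\!\int_{\Rd}(v\cdot\nabla_x f)(v\cdot\Phi)\,d\gamma\,dx \;=\; -\sum_{i,j}\int_U \partial_{x_i}\Phi_j \!\int_{\Rd} v_iv_j\, f \,d\gamma\,dx.
\]
Splitting $v_iv_j = \delta_{ij} + (v_iv_j - \delta_{ij})$ and using that each polynomial $v_iv_j - \delta_{ij}$ is $L^2_\gamma$-orthogonal to constants, the right-hand side equals $-\int_U \phi\, g\,dx$ plus an error bounded by $C\|\phi\|_{L^2(U)}\|f-g\|_{L^2(U;L^2_\gamma)}$. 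The left-hand side is at most $\|v\cdot\nabla_x f\|_{L^2(U;H^{-1}_\gamma)}\|h\|_{L^2(U;H^1_\gamma)}$. Rearranging and taking the supremum over $\phi$ with $\|\phi\|_{L^2(U)}=1$ produces the bound on $\|g - (g)_U\|_{L^2(U)}$ and hence~\eqref{e.poincare.mean}.

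For the zero-boundary inequality~\eqref{e.poincare.zerobndry} I would argue by contradiction. In view of~\eqref{e.poincare.mean} it suffices to control $|(f)_U|$ by the right-hand side when $f \in H^1_{\hyp,0}(U)$. If this failed there would exist $(f_n) \subset H^1_{\hyp,0}(U)$ with $|(f_n)_U| = 1$ and $\|\nabla_v f_n\|_{L^2(U;L^2_\gamma)} + \|v\cdot\nabla_x f_n\|_{L^2(U;H^{-1}_\gamma)} \to 0$. By~\eqref{e.poincare.mean} the sequence is bounded in $H^1_\hyp(U)$; along a subsequence, the compact embedding $H^1_\hyp(U) \hookrightarrow L^2(U;L^2_\gamma)$ combined with weak closedness of the subspace $H^1_{\hyp,0}(U)$ produces a limit $f_\infty \in H^1_{\hyp,0}(U)$ with $\nabla_v f_\infty = 0$ and $v\cdot\nabla_x f_\infty = 0$, so $f_\infty$ is a constant that must equal $\pm 1$ in the limit. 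The trace analysis developed in Section~\ref{s.boundary} identifies the only constant in $H^1_{\hyp,0}(U)$ as~$0$, yielding a contradiction.

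The algebraic duality in the second paragraph is essentially forced once one notices that pairing $v\cdot\nabla_x f$ with the ansatz $v\cdot\Phi$ reproduces $\nabla_x\cdot\Phi = \phi$ after integration by parts and projection onto the degree-zero Hermite mode in~$v$; the higher Hermite modes are absorbed by the Gaussian Poincar\'e inequality. The genuinely delicate point is the rigidity step behind~\eqref{e.poincare.zerobndry}, which requires both the compact embedding $H^1_\hyp(U) \hookrightarrow L^2(U;L^2_\gamma)$ (which is not standard Rellich and must be extracted from velocity averaging) and a sufficiently strong trace theory at the singular set $\{v\cdot\n_U = 0\} \subset \partial U\times\Rd$ to exclude nonzero constants from $H^1_{\hyp,0}(U)$. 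This is exactly the boundary phenomenon flagged in the introduction and handled in Section~\ref{s.boundary}.
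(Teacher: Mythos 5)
Your proof of \eqref{e.poincare.mean} is correct and is essentially the paper's argument in compressed form: the paper also starts from the Gaussian Poincar\'e inequality in $v$ and also reaches the spatial average through the Bogovski\u{\i} solution of $\nabla\cdot\Phi=\phi$ (that is the content of Lemma~\ref{l.est.L2.H-1}); the only cosmetic difference is that the paper tests $v\cdot\nabla_x f$ against $\phi(x)\xi_i(v)$ with $\xi_i\in C^\infty_c(\Rd)$ of first Gaussian moment $e_i$, passing through the intermediate estimate on $\|\nabla\langle f\rangle_\gamma\|_{H^{-1}(U)}$, whereas you use the polynomial weight $v\cdot\Phi(x)$ directly and absorb the higher Hermite modes in one step. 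Do note that the integration by parts for a general $f\in H^1_\hyp(U)$ should be justified by the density result (Proposition~\ref{p.density}), which the paper invokes at the outset.

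For \eqref{e.poincare.zerobndry} your route is genuinely different. The paper controls $|(f)_U|$ \emph{directly}: it builds an explicit $f_1\in C^\infty_c(\overline U\times\Rd)$ supported near a point of $\partial_\hyp U$, normalized so that $\fint_U\int v\cdot\nabla_x f_1\,d\gamma\,dx=1$, and integrates by parts against $f$; the boundary term vanishes because $ff_1=0$ on $\partial U\times\Rd$. This uses nothing beyond smooth approximation and yields the estimate in one page. Your compactness-and-rigidity argument is also valid, but it imports two nontrivial inputs: the compact embedding $H^1_\hyp(U)\hookrightarrow L^2(U;L^2_\gamma)$ (Proposition~\ref{p.compactembed}, which rests on the H\"ormander inequality, Theorem~\ref{t.hormander}) and the trace lemma (Lemma~\ref{l.trace}) to exclude nonzero constants from $H^1_{\hyp,0}(U)$. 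You should make the non-circularity explicit --- neither of those results uses the Poincar\'e inequality, so the logic closes --- but the price is that the ``easy'' global inequality now depends on the hardest analytic ingredient of the paper, and the constant obtained by contradiction is non-explicit. The paper's construction is the more elementary and self-contained of the two; your argument, on the other hand, generalizes more readily to situations where an explicit test function adapted to $\partial_\hyp U$ is awkward to write down, as in Remark~\ref{r.weaker.boundary.assumption}.
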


The inequality~\eqref{e.poincare.mean} asserts that, up to an additive constant, the full $H^1_\hyp(U)$ norm of a function~$f$ is controlled by the seminorm 
\begin{equation*}
\llbracket f \rrbracket_{H^1_\hyp(U)} := \left\| \nabla_v f \right\|_{L^2(U;L^2_\gamma)} 
+
\left\| v \cdot \nabla_x f \right\|_{L^2(U;H^{-1}_\ga)}.\end{equation*}
In particular, any distribution~$f$ with $\llbracket f \rrbracket_{H^1_\hyp(U)} < \infty$ is actually a function, which moreover belongs to~$L^2_xL^2_\gamma$. The inequality \eqref{e.poincare.zerobndry} is a then simple extension which shows that for functions which vanish on the hypoelliptic boundary, the full $H^1_\hyp$ norm is controlled by the seminorm.

\smallskip

The proof of Theorem~\ref{t.hypoelliptic.poincare} thus necessarily uses the H\"ormander bracket condition, although in this case the way it is used is rather implicit. If we follow H\"ormander's ideas more explicitly, then we obtain more information, namely some positive (fractional) regularity in the~$x$ variable. This is encoded in the following functional inequality, which we call the \emph{H\"ormander inequality}. The definitions of the fractional Sobolev spaces $H^\alpha$ used in the statement are given in Section~\ref{ss.hormander}, see~\eqref{e.def.frac.seminorm}.  The Besov space $\Qx(U)$ is defined in \eqref{e.Qx.norm} in Section~\ref{ss.besov} and measures difference quotients in the spatial variable $x$ of fractional order $\sfrac{1}{3}$. 

\begin{theorem}
[{H\"ormander inequality for {$H^1_\hyp$}}]
\label{t.hormander}
Let $\alpha \in \left[ 0,\tfrac 13 \right)$, and let {$U=\T^d$ or $U = \R^d$}. 
There exists a constant~$C({\al}, d)<\infty$ such that, for every~$f\in H^1_\hyp(U)$, we have the estimate 
\begin{equation}
\label{e.hormander}
\left\| f \right\|_{H^{\alpha}(U;L^2_\gamma)}
\leq
C \left\| f \right\|_{H^1_\hyp(U)} \, .
\end{equation}
For $\alpha=\sfrac{1}{3}$, we have the estimate
\begin{equation}\label{e.hormander.limit}
\left\| f \right\|_{\Qx(U)}
\leq
C \left\| f \right\|_{H^1_\hyp(U)}\, .
\end{equation}
\end{theorem}

The inequality~\eqref{e.hormander} gives control over a norm with non-negative regularity in~$x$ and~$v$. The estimate should be considered as an interior estimate in~$x$; in other words, for $U$ a general domain and any~$f\in H^1_\hyp(U)$, we can apply the inequality~\eqref{e.hormander} after multiplying~$f$ by a smooth cutoff function which vanishes for~$x$ near $\partial U$.
\smallskip

Our next main result asserts that weak solutions of~\eqref{e.thepde} are actually smooth. This is accomplished by an argument which closely parallels the one for obtaining~$H^k$ regularity for solutions of uniformly elliptic equations. We first obtain a version of the Caccioppoli inequality, that is, a reverse Poincar\'e inequality, which states that the $H^1_\hyp$ seminorm of a solution of~\eqref{e.thepde} can be controlled by its~$L^2$ oscillation (see Lemma~\ref{l.caccioppoli} for the precise statement). Combined with Theorem~\ref{t.hormander}, this tells us that a fractional spatial derivative of a solution of~\eqref{e.thepde} can be controlled by the $L^2$ oscillation of the function itself. This estimate can then be iterated: we repeatedly differentiate the equation a fractional amount to obtain estimates of the higher derivatives of the solution in the~$x$ variable; we then obtain estimates for derivatives in the~$v$ variable relatively easily. 

\smallskip

Notice that the following statement implies that solutions of~\eqref{e.thepde} are $C^\infty$ in both variables $(x,v)$ provided that the vector field~$\b$ is assumed to be smooth. For convenience, in the statement below we use the convention~$C^{-1,1} = L^\infty$.

\begin{theorem}[Interior Sobolev regularity for~\eqref{e.thepde}]
\label{t.interior.regularity.both}
Let~$k \in\N$, $r\in (0,\infty)$ and $\b \in C^{k-1,1}(B_r \times \R^d;\Rd)$. There exists a constant~$C<\infty$ depending on
\begin{equation*}
\left(d,k,r, \left\| \b \right\|_{C^{k-1,1}(B_r \times \R^d;\Rd)} \right)
\end{equation*}
such that, for every~$f \in H^1_\hyp(B_r)$ and~$f^* \in L^2(B_r;H^{-1}_\gamma)$ satisfying
\begin{equation} 
\label{e.intreg.both}
-\Delta_v f + v\cdot \nabla_v f + v\cdot \nabla_x f + \b \cdot \nabla_v f = f^* \quad \mbox{in} \ B_r \times \Rd \, ,
\end{equation}
the following holds: If $\p^\alpha f^* \in L^2(B_r;H^{-1}_\gamma)$ for all multi-indices $\alpha \in \N^d \times \N^d$ with $|\alpha| \leq k$, then we have 
$\p^\alpha f\in H^1_{\rm hyp}\left(B_{{\sfrac{r}{2}}}\right)$ and the estimate
\begin{equation*}
\left\| \p^\alpha f \right\|_{H^1_{\rm hyp}\left(B_{{\sfrac{r}{2}}}\right)}
\leq 
C\left( 
\left\| f - \left(f\right)_{B_{r}} \right\|_{L^2 (B_{r};L^2_\gamma)}
+
\sum_{|\beta| \leq k}
\left\| \p^\beta \tilde{f}^* \right\|_{L^2(B_r;H^{-1}_\gamma)}
\right)
\end{equation*}
for all multi-indices $\alpha \in \N^d \times \N^d$ with $|\alpha| \leq k$.
\end{theorem}

The results stated above are for the time-independent Kramers equation~\eqref{e.thepde}. In Section~\ref{s.kin}, we develop an analogous theory for the time-dependent kinetic Fokker-Planck  equation~\eqref{e.thepde.witht} with an associated function space~$H^1_\kin$ (defined in~\eqref{e.H1kin.def}--\eqref{e.H1kin.norm.def}) in place of~$H^1_\hyp$. In particular, we obtain analogues of the results above for~\eqref{e.thepde.witht} which are stated in Section~\ref{s.kin}. 

\smallskip

The long-time behavior of solutions of~\eqref{e.thepde.witht} has been studied by many authors in the last two decades: see the works of Desvillettes and Villani~\cite{DV}, H\'erau and Nier~\cite{HerauN}, Helffer and Nier~\cite{HN}, Eckmann and Hairer~\cite{EH2}, Desvillettes and Villani~\cite{DV2} and Villani~\cite{V} as well as the references in~\cite{V}. Most of these papers consider the case in which~$\b(x) = -\nabla W(x)$ for a potential~$W$ which has sufficient growth at infinity, in which case $dm$ is an explicit invariant measure, and solutions of~\eqref{e.thepde.witht} can be expected to converge exponentially fast to the constant which is the integral of the initial data with respect to the invariant measure. This setting is in a certain sense easier than the Dirichlet problem, since one does not have to worry about the boundary. While our methods could also handle this setting, we formulate a result for the exponential convergence of a solution of the Cauchy-Dirichlet problem with constant-in-time right-hand side  to the solution of the time-independent problem.

\begin{theorem}
[Convergence to equilibrium]
\label{t.hypo.equilibrium}
Let $U\subset \Rd$ be a $C^1$ domain and $\b \in L^\infty(U;C^{0,1}(\Rd))^d$. There exists $\lambda\left(\|\b\|_{L^\infty(U\times \Rd)},U,d\right) > 0$ satisfying the following property. Let $f^* \in L^2(U;H^{-1}_\gamma)$. Suppose that $f_\infty\in H^1_{\hyp,0}(U)$ solves~\eqref{e.DP}, and that for every $T \in (0,\infty)$, $f \in H^1_{\kin}((0,T)\times U)$ solves
\begin{equation}
\label{e.equileq}
\left\{
\begin{aligned}
& 
\partial_t f -\Delta_v f + v \cdot \nabla_v f + v \cdot \nabla_x f + \b\cdot \nabla_v f = f^*
& \mbox{in} & \ (0,T)\times U \times\Rd \, , \\
& f = 0 & \mbox{on} & \ (0,T) \times \partial_\hyp U \, ,
\end{aligned} 
\right. 
\end{equation}
where the boundary condition is satisfied in the sense that $f \in H^1_{\kin,||}((0,T) \times U)$.\footnote{$H^1_{\kin,||}((0,T) \times U)$ is defined to be the closure of test functions $C^\infty([0,T];U)$ vanishing on the lateral part of the hypoelliptic boundary, see subsection~\ref{ss.decay}.} Then, for every $t \ge 0$, we have
\begin{equation}
\label{e.expnn.conv}
\left\| f(t,\cdot) - f_\infty \right\|_{L^2(U;L^2_\ga)} 
\leq 
2\exp(-\lambda t)\left\|f(0,\cdot)- f_\infty \right\|_{L^2(U;L^2_\ga)}.
\end{equation}
\end{theorem}
%The precise sense of the boundary condition in~\eqref{e.equileq} is given below in Section~\ref{s.kin}: see the statement of Proposition~\ref{p.decay}.
Notice that interior regularity estimates immediately upgrade the~$L^2$ convergence in~\eqref{e.expnn.conv} to convergence in spaces of higher regularity (at least in the interior) with the same exponential rate. 

\smallskip

Unlike previous arguments establishing the exponential decay to equilibrium of solutions of~\eqref{e.thepde.witht} which are based on differentiation of perhaps non-transparent
%rather ad-hoc
quantities involving the solution and several (possibly mixed) derivatives in both~$x$ and~$v$, the proof of Theorem~\ref{t.hypo.equilibrium} we give here is elementary and close to the classical dissipative estimate for uniformly parabolic equations. The essential idea is to differentiate the square of the~$L^2$ norm of the solution and then apply the Poincar\'e inequality. We cannot quite perform the computation exactly like this, and so we use a finite difference instead of the time derivative and apply a version of the Poincar\'e inequality adapted to the kinetic equation in a thin cylinder (see Proposition~\ref{p.poincare.kin}). Unlike previous approaches, our method therefore relates the positive constant~$\lambda$ in~\eqref{e.expnn.conv} to the optimal constant in a Poincar\'e-type inequality. {One caveat of Theorem~\ref{t.hypo.equilibrium} is that, while we have a hypoelliptic Poincar{\'e} inequality in the above setting, we do not yet have a well-posedness theory in $H^1_\kin$ except when $U = \T^d$.}

\smallskip

Finally, we prove an enhanced dissipation estimate for solutions to the kinetic Fokker-Planck equation on the torus $\T^d$ with no right-hand side and $\b\equiv 0$ in a weakly collisional limit $\varepsilon\rightarrow 0^+$. The PDE satisfied by $f$ when initial data $f_{\rm in}$ is given then becomes
\begin{equation}
\label{e.thepde.witht.andeps}
\left\{
\begin{aligned}
\partial_t f + v \cdot \nabla_x f &= \varepsilon \left( \Delta_v f - v \cdot \nabla_v f \right) \quad \mbox{in}  \ 
(0,\infty) \times \T^d \times\Rd \, \\
f|_{t=0}&=f_{\rm in}\, .
\end{aligned}
\right.
\end{equation}
The spatial averages $f_{\rm avg}(t,v) := \int_{\T^d} f (t,x,v) dx$ satisfy
\begin{equation}
    \label{eq:equationforleaverages}
    \p_t f_{\rm avg} = \varepsilon \left( \Delta_v f_{\rm avg} - v \cdot \nabla_v f_{\rm avg} \right)
\end{equation}
and decay only on the dissipative timescale $T_{\rm d} \sim \varepsilon^{-1}$, as can be seen by rescaling $t$ in~\eqref{eq:equationforleaverages}. In the setting of~\eqref{e.thepde.witht.andeps}, enhanced dissipation is the observation that $f - f_{\rm avg}$ decays on the faster timescale $T_{\rm e} \sim \varepsilon^{-\sfrac{1}{3}}$:

\begin{theorem}[Enhanced dissipation]\label{t.enhancement}
There exist constants $C(d) < \infty$ and $c(d) > 0$ such that for every $\ep \in (0,1]$, initial data $f_{\rm in} \in L^2(\T^d;L^2_\ga)$ satisfying
\begin{equation}\label{e.meanzero}
    \int_{\T^d} f_{\rm in} (x,v) dx = 0 \qquad \forall v \in \R^d\, ,
\end{equation}
and for $f$ the unique solution of~\eqref{e.thepde.witht.andeps} constructed in Proposition~\ref{pro:solvabilitykinetic}, we have
\begin{equation}\label{e.enhancement}
    \left\| f(t,\cdot,\cdot) \right\|_{L^2(\T^d; L^2_\gamma} \leq C \left\| f_{\rm in} \right\|_{L^2(\T^d; L^2_\gamma} \exp\left( - c \varepsilon^{-\sfrac{1}{3}} t \right) \, .
\end{equation}
\end{theorem}

When enhancement cannot be extracted directly from an explicit solution formula, it is often approached by hypocoercivity techniques, which were developed by Villani~\cite{V} in the context of kinetic theory; see also work of Guo \cite{guolandau}. These methods were adapted to the context of fluid dynamics in work of Beck and Wayne~\cite{beckwaynebar}, Gallagher, Gallay, and Nier~\cite{gallaghergallaynier}, and Bedrossian and Coti-Zelati~\cite{jacobmicheleshear}. In joint work of the first and last authors with Beekie~\cite{abn21}, we demonstrated enhancement for solutions of certain advection-diffusion equations (passive scalars in shear flows) by methods which adhered more closely to H\"ormander's original paper~\cite{H}. Theorem~\ref{t.enhancement}, which is inspired by~\cite{abn21}, follows from an appropriate time- and $\varepsilon$-dependent version of the H\"ormander inequality from Theorem~\ref{t.hormander}.

\smallskip 
In principle, one may also prove~\eqref{e.enhancement} with $\b$ satisfying Assumption~\ref{a.conserv}, see Remark~\ref{rmk:inprincipleonecan}. It would be interesting to understand this method in the context of the Boltzmann and Landau equations.

\subsection{On unique solvability of the Dirichlet problem}

There is a subtle point in the analysis of the Dirichlet problem for~\eqref{e.thepde} on general domains $U$ which is due to the fact that we should prescribe the boundary condition only on part of the boundary, namely $\partial_\hyp U:= \left\{ (x,v) \in \partial U \times \Rd \, :\, v\cdot \n_U(x) < 0\right\}$, where $\n_U$ denotes the outer normal to~$U$. There is a difficulty coming from the possibly wild behavior of the trace of an $H^1_\hyp$ function near the \emph{singular set}  $\left\{ (x,v) \in \partial U \times \Rd \, :\, v\cdot \n_U(x) = 0\right\}$, where particle trajectories graze the boundary.  The following question remains open:\footnote{It is not difficult to define a pointwise a.e. trace away from the singular set, see Lemma~4.3 in the original version~\cite{armstrong2019variational} of this paper on arXiv, but apparently this has limited usefulness.}
\begin{question}
\label{q.trace}
Does there exist $C(U,d) < \infty$ such that for every $f \in C^\infty_c(\bar U\times \Rd)$,
\begin{equation*}  %\labe
\int_{\partial U \times \Rd} f^2 \, |v\cdot \n_U| \, dx \, d\ga \le C \|f\|_{H^1_\hyp(U)}^2 \quad ?
\end{equation*}
\end{question}
In the case of one spatial dimension ($d=1$), this difficulty has been previously overcome and the well-posedness result was already proved in~\cite{BG}. A generalization to higher dimensions was announced in~\cite{Car}, but we think that the argument given there is incomplete because the difficulty concerning the boundary behavior was not satisfactorily treated. This is explained in more detail in Appendix~A of the original version~\cite{armstrong2019variational} of the present work. A different way to phrase the main difficulty is discussed in Remark~\ref{rmk:difficultywithboundary}. %~\ref{s.discuss}

\smallskip

The original version~\cite{armstrong2019variational} of this paper contained an error in the treatment of the Dirichlet and Cauchy-Dirichlet problems for the Kramers and kinetic Fokker-Planck equations, respectively.\footnote{See two equations below (4.20) in the original version on arXiv (``Arguing as in for the last term in (4.19), ...").} We were unable to repair the proof, see Remark~\ref{rmk:difficultywithboundary} below. In this version, we only prove unique solvability on the torus. \emph{It remains an interesting open question whether unique solvability holds with boundary in the natural $H^1_{\rm hyp}$ class.}

%On one hand, this weakens the well-posedness theory.
% On the other hand, we believe that the original version has already succeeded in stimulating the community in and around kinetic and hypoelliptic equations to revisit, see, e.g.,  Therefore, in this version of the paper, we aim to describe our method and philosophy in a simple setting, on the torus. 

\smallskip

In the intervening years, we succeeded in improving the results in other ways. Foremost, we sharpen the H{\"o}rmander-type inequality from $\alpha = \sfrac{1}{6}-$ to $\alpha = \sfrac{1}{3}-$ \emph{without} cutoffs in the velocity variable. The second and third authors view this as a significant strengthening of the paper, essentially due to the first and fourth authors. This allows us to prove enhanced relaxation to equilibrium, which was not contained in the first version of the paper. There have also been many works revisiting~\cite{H} and at least partially inspired by the first version, see~\cite{Bedrossian2021,bedrossian2020quantitative,ABM,guerand2021logtransform,anceschi2021note,brigati2021time,cao2019explicit,lu2020explicit}.

%The existence proof in Section~\ref{s.wellpose} by means of the Lions-Lax-Milgram theorem is also a new addition.

\subsection{Outline of the paper}
In the next section we present the function space~$H^1_\hyp(U)$ and its important properties, as well as the Besov spaces used in the H\"ormander inequality. In Section~\ref{s.functional} we prove the functional inequalities stated in Theorems~\ref{t.hypoelliptic.poincare} and~\ref{t.hormander} and establish the compactness of the embedding of~$H^1_\hyp(U)$ into~$L^2(U;L^2_\gamma)$.  In Section~\ref{s.wellpose} we give two proofs of Theorem~\ref{t.hypo.DP} on the well-posedness of the Dirichlet problem for the Kramers equation. The interior regularity of solutions, and in particular Theorem~\ref{t.interior.regularity.both}, is obtained in Section~\ref{s.regularity}. Finally, in Section~\ref{s.kin} we prove the analogous results for the kinetic Fokker-Planck equation~\eqref{e.thepde.witht} as well as the exponential decay to equilibrium (Theorem~\ref{t.hypo.equilibrium}) and the enhancement estimate (Theorem~\ref{t.enhancement}).

\section{Function space basics}

In this section, we establish some basic properties of the function space~$H^1_\hyp(U)$ defined in~\eqref{e.def.H1hyp}--\eqref{e.def.h1hypnorm} and introduce several Besov-type spaces which will be necessary for the proof of the H\"ormander inequality.

\subsection{Properties of \texorpdfstring{$H^1_\gamma$}{honegamma} and \texorpdfstring{$H^{-1}_\gamma$}{hminusonegamma}}
We start by setting up some notation that will be used throughout the paper.  We denote the formal adjoint of the operator $\nabla_v$ by $\nabla_v^*$; that is, for every $F \in (H^1_\ga)^d$, we denote
\begin{equation}  
\label{e.def.nablavstar}
\nabla_v^* F := -\nabla_v \cdot F + v \cdot F.
\end{equation}
This definition can be extended to any $F \in (L^2_\ga)^d$, in which case $\nabla_v^* F \in H^{-1}_\ga$ and we have, for every $f \in H^1_\ga$,
\begin{equation*}  %\label{e.}
\int_\Rd f \, \nabla_v^* F \, d\gamma = \int_\Rd \nabla_v f \cdot F \, d\ga \, .
\end{equation*}
Recall that the left side above is shorthand notation for the duality pairing between $H^1_\ga$ and $H^{-1}_\ga$. 
We denote the average of a function $f \in L^1_\gamma$ by 
\begin{equation}
\label{e.def.rangle.gamma}
\left\langle f \right\rangle_\gamma := \int_{\Rd} f \,d\gamma \, .
\end{equation}
Since $1 \in H^1_\ga$, the definition of $\la f \ra_\ga$ can be extended to arbitrary $f \in H^{-1}_\ga$. The Gaussian Poincar\'e inequality states that, for every $f \in H^1_\ga$, 
\begin{equation*}  %\label{e.}
\|f - \la f \ra_\ga\|_{L^2_\ga} \le \|\nabla_v f\|_{L^2_\ga} \, .
\end{equation*}
We can thus replace~$\left\| f \right\|_{L^2_\gamma}$ by~$\left|\left\langle f \right\rangle_\gamma\right|$ in the definition of~$H^1_\gamma$ and have an equivalent norm:
\begin{equation*} \label{}
\left| \left\langle f \right\rangle_\gamma \right|^2
+ \left\| \nabla f \right\|_{L^2_\gamma}^2
\le 
\left\| f \right\|_{H^1_\gamma}^2 
\le 
2\left| \left\langle f \right\rangle_\gamma \right|^2 
+ 3\left\| \nabla f \right\|_{L^2_\gamma}^2 \, .
\end{equation*}
This comparison of norms has the following counterpart for the dual space $H^{-1}_\ga$.
\begin{lemma}[Identification of $H^{-1}_\ga$]
\label{l.rep.H-1}
There exists a universal constant $C < \infty$ such that for every $f^* \in H^{-1}_\ga$,
\begin{equation}  
\label{e.rep.H-1}
C^{-1} \|f^*\|_{H^{-1}_\ga} 
\le \left|  \left\langle f^*\right\rangle_\gamma \right|
+ \inf\left\{ \left\| \h \right\|_{L^2_\gamma} \,:\, \nabla_v^* \h   = f^* -  \left\langle f^*\right\rangle_\gamma \right\} 
\le C \|f^*\|_{H^{-1}_\ga} \, .
\end{equation}
\end{lemma}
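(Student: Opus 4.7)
The plan is to establish the two inequalities in \eqref{e.rep.H-1} separately, with the lower bound being essentially immediate from duality and the upper bound requiring solving a Poisson-type problem in the Gaussian space.

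First I would handle the lower bound $\|f^*\|_{H^{-1}_\gamma} \lesssim |\langle f^*\rangle_\gamma| + \|\h\|_{L^2_\gamma}$ for any admissible $\h$. Given any test function $\varphi \in H^1_\gamma$, I would split the duality pairing as
\begin{equation*}
\langle f^*,\varphi\rangle_{H^{-1}_\gamma,H^1_\gamma}
=
\langle f^*\rangle_\gamma \,\langle \varphi\rangle_\gamma
+
\langle f^* - \langle f^*\rangle_\gamma, \varphi\rangle_{H^{-1}_\gamma,H^1_\gamma},
\end{equation*}
and rewrite the second term using $f^* - \langle f^*\rangle_\gamma = \nabla_v^* \h$ as $\int_{\Rd} \h\cdot\nabla_v\varphi\, d\gamma$. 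Since $|\langle \varphi\rangle_\gamma|\le \|\varphi\|_{L^2_\gamma}\le \|\varphi\|_{H^1_\gamma}$ and $\|\nabla_v\varphi\|_{L^2_\gamma}\le\|\varphi\|_{H^1_\gamma}$, Cauchy--Schwarz gives $|\langle f^*,\varphi\rangle|\le(|\langle f^*\rangle_\gamma|+\|\h\|_{L^2_\gamma})\|\varphi\|_{H^1_\gamma}$. Taking the supremum over $\|\varphi\|_{H^1_\gamma}\le 1$ and then the infimum over admissible $\h$ yields the desired bound with constant $1$.

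For the upper bound I need, for every mean-zero distribution $\tilde f^* := f^*-\langle f^*\rangle_\gamma \in H^{-1}_\gamma$, to produce some $\h \in (L^2_\gamma)^d$ with $\nabla_v^* \h = \tilde f^*$ and $\|\h\|_{L^2_\gamma}\lesssim \|\tilde f^*\|_{H^{-1}_\gamma}$. I would obtain this by solving the Gaussian Poisson problem: find $u$ in the mean-zero subspace $\{w\in H^1_\gamma : \langle w\rangle_\gamma=0\}$ such that
\begin{equation*}
\int_\Rd \nabla_v u\cdot \nabla_v \varphi\,d\gamma
=
\langle \tilde f^*,\varphi\rangle_{H^{-1}_\gamma,H^1_\gamma}
\quad\text{for all }\varphi\in H^1_\gamma.
\end{equation*}
By the Gaussian Poincar\'e inequality already recalled just before the statement, the bilinear form on the left is coercive on the mean-zero subspace (with its $H^1_\gamma$-norm being equivalent to $\|\nabla_v\cdot\|_{L^2_\gamma}$), and the right-hand side defines a bounded linear functional of norm $\lesssim \|\tilde f^*\|_{H^{-1}_\gamma}$ on this subspace since $\varphi$ and $\varphi - \langle \varphi\rangle_\gamma$ give the same pairing against $\tilde f^*$. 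The Lax--Milgram theorem then produces a unique $u$ with $\|\nabla_v u\|_{L^2_\gamma}\lesssim \|\tilde f^*\|_{H^{-1}_\gamma}$. Setting $\h := \nabla_v u$ gives exactly $\nabla_v^*\h = \tilde f^*$, and combined with the trivial bound $|\langle f^*\rangle_\gamma| = |\langle f^*,1\rangle|\le \|1\|_{H^1_\gamma}\|f^*\|_{H^{-1}_\gamma} = \|f^*\|_{H^{-1}_\gamma}$ and $\|\tilde f^*\|_{H^{-1}_\gamma}\le \|f^*\|_{H^{-1}_\gamma}+|\langle f^*\rangle_\gamma|\lesssim \|f^*\|_{H^{-1}_\gamma}$, this yields the upper bound.

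There is no real obstacle here: the only subtle point is the use of the Gaussian Poincar\'e inequality to get coercivity after projecting onto mean-zero functions, and to confirm that the right-hand side is well-defined on the whole $H^1_\gamma$ (not just on mean-zero functions) since $\langle\tilde f^*,1\rangle=0$. Once that is noted, everything follows from Lax--Milgram and duality.
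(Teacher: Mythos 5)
Your proof is correct and follows essentially the same route as the paper: both directions hinge on producing $\h$ as $\nabla_v u$ where $u$ solves the Gaussian Poisson equation $\nabla_v^*\nabla_v u = f^* - \langle f^*\rangle_\gamma$, and on the elementary duality estimate for the converse. The only (cosmetic) difference is that the paper invokes the Riesz representation theorem for the modified inner product $(f,g)\mapsto \langle f\rangle_\gamma\langle g\rangle_\gamma + \int_{\Rd}\nabla_v f\cdot\nabla_v g\,d\gamma$ on all of $H^1_\gamma$, whereas you apply Lax--Milgram on the mean-zero subspace and then extend the variational identity to all of $H^1_\gamma$ using $\langle \tilde f^*,1\rangle = 0$; both steps rest on the same Gaussian Poincar\'e inequality.
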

\begin{proof}
The bilinear form
\begin{equation*}  %\label{e.}
(f,g) \mapsto \la f \ra_\ga \, \la g \ra_\ga + \int_\Rd \nabla_v f \cdot \nabla_v g \, d\ga
\end{equation*}
is a scalar product for the Hilbert space $H^1_\ga$. By the Riesz representation theorem, for every $f^* \in H^{-1}_\ga$, there exists $g \in H^1_\ga$ such that
\begin{equation*}  %\label{e.}
\forall f \in H^1_\ga \qquad \int_\Rd f f^* \, d\ga = \la f \ra_\ga \, \la g \ra_\ga + \int_\Rd \nabla_v f \cdot \nabla_v g \, d\ga \, .
\end{equation*}
(Recall that the integral on the left side is convenient notation for the canonical pairing between $H^1_\ga$ and $H^{-1}_\ga$.) We clearly have $\la g \ra_\ga = \la f^* \ra_\ga$, and thus
\begin{equation*}  %\label{e.}
\left| \la g \ra_\ga\right|^2 + \int_\Rd |\nabla_v g|^2 \, d\ga  \le \|g\|_{H^1_\ga} \, \|f^*\|_{H^{-1}_\ga} \, .
\end{equation*}
This implies that $\|\nabla_v g\|_{L^2_\ga} \le C \|f^*\|_{H^{-1}_\ga}$, and since $\nabla_v^*\nabla_v g = f^* - \la f^* \ra_\ga$, 
this proves the rightmost inequality in \eqref{e.rep.H-1}. Conversely, for any $\h \in L^2_\ga$, if 
\begin{equation*}  %\label{e.}
f^* = \la f^* \ra_\ga + \nabla_v^* \h \, ,
\end{equation*}
then for every $f \in H^1_\ga$,
\begin{equation*}  %\label{e.}
\Ll| \int_\Rd f f^* \, d\ga \Rr| 
\le 
\Ll|\la f \ra_\ga\Rr| \, \Ll|\la f^* \ra_\ga\Rr| + \|\nabla f\|_{L^2_\ga} \, \|\h\|_{L^2_\ga} \, ,
\end{equation*}
and thus the leftmost inequality in \eqref{e.rep.H-1} holds. \end{proof}

We often work with the dual pair of Banach spaces $L^2(U;H^1_\gamma)$ and~$L^2(U;H^{-1}_\gamma)$. With the identification given by Lemma~\ref{l.rep.H-1}, we have
\begin{align} 
\label{e.eq.L2H-1}
%\lefteqn{
\left\| f^* \right\|_{L^2(U;H^{-1}_\gamma)} 
\simeq \left\|  \left\langle f^*\right\rangle_\gamma \right\|_{L^2(U)}
+
\inf \left\{ \left\| \g \right\|_{L^2(U;L^2_\gamma)} 
\,:\, 
\nabla_v^* \g = f^* - \left\langle f^*\right\rangle_\gamma
\right\},
\end{align}
in the sense that the norms on each side are equivalent.

For convenience, for every $f \in L^1(U;L^1_\ga)$, we use the shorthand notation
\begin{equation}
\label{e.mean.U}
(f)_U := |U|^{-1} \int_{U\times \Rd} f(x,v) \, d\sigma(x) \, d\ga(v).
\end{equation}
We will occasionally also use this notation in the case when $f$ depends only on the space variable $x$, in which case we simply have $(f)_U = |U|^{-1} \int_U f \, d\sigma(x)$.

\smallskip
In the proof of the H\"ormander inequality, it will be beneficial to understand which type of finite differences are controlled by $\left\| f \right\|_{H^1_\gamma}$. Recall that
\begin{equation}\notag
%\label{e.def.gamma}
d\gamma(v) := \left( 2\pi \right)^{-\frac d2} \exp\Ll(-\frac 1 2 |v|^2\Rr) \, dv \, .
\end{equation}
The fundamental issue is that $\gamma(\cdot + h)$ is not comparable to $\gamma$, above and below, uniformly in $v$. For instance, while the translation of the measure $\ga$ by a fixed vector $y \in \Rd$ is absolutely continuous with respect to $\ga$, the associated Radon-Nikodym derivative is unbounded (unless $y = 0$).  This distinguishes Gaussians from $e^{-\la x \ra}$, for example, and changes the finite difference characterization of the space
\begin{equation}\notag
    \norm{\nabla_v u}_{L^2(U;L^2_\gamma)} \, ,
\end{equation}
since its finite difference characterization is not in the seminorm
\begin{equation}\notag
    \sup_{h>0} h^{-1} \norm{u(x,v+h) - u(x,v)}_{L^2(U;L^2_\gamma)} \, .
\end{equation}

Towards an appropriate characterization, we first note that a consequence of the logarithmic Sobolev inequality and the Gaussian Poincar\'e inequality is the estimate
\begin{equation}
    \label{eq:ineqwithv}
\norm{|v| u}_{L^2(U;L^2_\gamma)} \lesssim \norm{\nabla_v u}_{L^2(U;L^2_\gamma)} \,
\end{equation}
for functions $u$ satisfying $\langle u \rangle_\gamma =0$; the reader may consult \eqref{e.extrav2integrab} and the ensuing discussion for details.
%
% Recall that $\nabla_v \gamma = - v \gamma$ and $\Delta \gamma = - d \gamma + |v|^2 \gamma$. Then
% \begin{align}
%     \int_{\R^d} |v|^2 |u|^2 \, d\gamma &= d \cdot \int_{\R^d} |u|^2 d\gamma \, dv + \int_{\R^d} |u|^2 \Delta \gamma \, dv \notag\\
%     &{\lesssim} \int_{\R^d} |u|^2 d\gamma + \left| \int_{\R^d} u v \cdot \nabla_v u \right| \, d\gamma \notag\\
%     &\lesssim_\varepsilon \int_{\R^d} |\nabla u|^2 \, d\gamma + \varepsilon \int_{\R^d} |v|^2 |u|^2, \notag
% \end{align}
% where we have used the Gaussian Poincar{\'e} inequality and Young's inequality as well as integration by parts in the second line.
%
The inequality~\eqref{eq:ineqwithv}, together with the product rule, gives that
\begin{equation}\notag
    \norm{\nabla_v (u \gamma^{1/2})}_{L^2(U; L^2(\R^d))} \lesssim \norm{\nabla_v u}_{L^2(U;L^2_\gamma(\R^d))} \, ,
\end{equation}
and since the left-hand side has a finite difference characterization, we have
\begin{equation}\label{e.weighted.difference}
    \sup_{h\in\R^d\setminus\{0\}} |h|^{-1} \norm{u(x,v+h) \gamma^{1/2}(v+h) - u(x,v) \gamma^{1/2}(v)}_{L^2(U;L^2(\R^d))} \lesssim \norm{\nabla_v u}_{L^2(U;L^2_\gamma)} \, .
\end{equation}
We refer to Lunardi~\cite{lunardi} for further discussion.
% \sa{I am currently unsure of the other direction ... and add citation to Lunardi.}

\subsection{Density of Smooth Functions in \texorpdfstring{$H^1_\hyp$}{H1hyyp}}

We show that the set of smooth functions is dense in~$H^1_\hyp$. 
\begin{proposition}
\label{p.density}
The set $C^\infty_c(\overline{U} \times \Rd)$ of smooth functions with compact support in $\overline{U} \times\Rd$ is dense in $H^1_{\hyp}(U)$.
\end{proposition}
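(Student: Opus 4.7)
The plan is to establish density by a two-stage approximation: first truncate $f$ in the velocity variable to reduce to the case of compact $v$-support, then mollify jointly in $(x,v)$, treating the boundary $\partial U$ by a partition of unity and local straightening. The main obstacle I anticipate is ensuring that the boundary-straightening change of coordinates is compatible with the $L^2(U;H^{-1}_\gamma)$ part of the $H^1_\hyp$-norm.

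For the truncation, fix $\chi\in C^\infty_c(\Rd)$ with $\chi \equiv 1$ on $B_1$ and $\supp\chi \subset B_2$, and set $\chi_R(v) := \chi(v/R)$. I would show $f_R := \chi_R f \to f$ in $H^1_\hyp(U)$ as $R \to \infty$. Convergence in $L^2(U; H^1_\gamma)$ follows from dominated convergence, using $|\nabla_v\chi_R| \le C/R$ and the Gaussian weight. For the remaining term, $v\cdot\nabla_x f_R = \chi_R(v\cdot\nabla_x f)$; I would invoke \lref{rep.H-1} to write, for a.e.\ $x\in U$,
\begin{equation*}
(v\cdot\nabla_x f)(x,\cdot) = \la v\cdot\nabla_x f(x,\cdot)\ra_\gamma + \nabla_v^*\h(x,\cdot),
\end{equation*}
with $\h \in L^2(U;(L^2_\gamma)^d)$. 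The identity $\chi_R\nabla_v^*\h = \nabla_v^*(\chi_R\h) + \h\cdot\nabla_v\chi_R$ combined with the continuous embedding $L^2_\gamma\hookrightarrow H^{-1}_\gamma$ then reduces the convergence to checking that both $(1-\chi_R)\h$ and $\h\cdot\nabla_v\chi_R$ tend to zero in $L^2(U;L^2_\gamma)$, which is again dominated convergence.

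Assuming next that $\supp f \subset \overline{U}\times B_{R_0}$, I would cover $\overline{U}$ by finitely many open sets $U_0 \subset\subset U$ and $U_1,\dots,U_N$ meeting the boundary, each $U_i$ carrying a $C^{1,1}$ diffeomorphism $\Phi_i$ that straightens $\partial U\cap U_i$ into a hyperplane, and fix a subordinate partition of unity $\{\theta_i\}$. It suffices to approximate each $\theta_i f$. The interior piece $\theta_0 f$, being compactly supported in $U\times\Rd$, is handled by joint mollification in $(x,v)$: mollification in $x$ commutes with $v\cdot\nabla_x$, while on the bounded $v$-set where the function lives the Gaussian weight is comparable to Lebesgue measure, so mollification in $v$ is continuous on $L^2_\gamma$ and $H^1_\gamma$ (the commutator between $v\cdot$ and $\psi_\eps *_v$ contributes an error of order $\eps$ which vanishes in the limit). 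For a boundary piece $\theta_i f$, I would pull back by $\Phi_i$, translate by $-\tau e_d$ to push the support strictly into the open half-space $\{y_d>0\}$, mollify jointly in $(y,v)$ with a standard kernel, and then send $\tau$ and $\eps$ to zero along a diagonal; translation continuity on $L^2(\Rd;H^1_\gamma)$ and $L^2(\Rd;H^{-1}_\gamma)$ is automatic since these are Banach-valued $L^2$ spaces.

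The hardest point is verifying that the change of coordinates $\Phi_i$ preserves the $H^1_\hyp$-structure: the chain rule turns the operator $v\cdot\nabla_x$ into $(D\Phi_i(y)\,v)\cdot\nabla_y$, so one must check that multiplication by the matrix $D\Phi_i$, which is bounded and Lipschitz in $y$ alone, is a bounded operation on $H^{-1}_\gamma$-valued distributions---this follows by duality from the corresponding fact for $H^1_\gamma$, since the matrix has no $v$-dependence. Given the resulting norm-equivalence on each chart, the translation and mollification steps reduce to the standard density argument for Banach-space-valued Sobolev functions on a half-space, which transfers back through $\Phi_i^{-1}$ to yield the desired approximation in $H^1_\hyp(U)$.
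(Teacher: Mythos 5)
Your $v$-truncation step is correct and fills in a detail the paper only asserts, and your treatment of the interior piece is workable provided the $x$-mollification is performed before the $v$-mollification (the commutator you call ``of order $\eps$'' involves the full spatial gradient $\nabla_x f$, which is only under control once the function has been regularized in $x$; so the two parameters must be sent to zero in the right order, not jointly). The genuine gap is in the boundary step. After straightening by a diffeomorphism $\Phi_i$, the operator $v\cdot\nabla_x$ pulls back to $\bigl((D\Phi_i(y))^{-1}v\bigr)\cdot\nabla_y$, whose coefficients depend on $y$. Translation by $-\tau e_d$ and convolution in $y$ therefore no longer commute with it: writing $g_\tau(y,v)=g(y+\tau e_d,v)$, the difference between the translate of the controlled quantity and the controlled quantity of the translate is $\bigl[\bigl((D\Phi_i(y))^{-1}-(D\Phi_i(y+\tau e_d))^{-1}\bigr)v\bigr]\cdot(\nabla_y g)(y+\tau e_d,v)$, which involves the \emph{full} spatial gradient $\nabla_y g$. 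Membership in $H^1_\hyp$ gives no control whatsoever on $\nabla_y g$ --- only on the single combination $v\cdot\nabla_x f$, and only in the weak norm $L^2(U;H^{-1}_\gamma)$ (indeed, recovering any positive amount of spatial regularity is the content of Theorem~\ref{t.hormander}, and even there one gets only a fractional derivative). So ``translation continuity is automatic'' is false for the transported norm, and the boundedness of multiplication by $D\Phi_i$ on $H^{-1}_\gamma$-valued functions, which you do address, does not touch this commutator. The same objection applies to the mollification in $y$ in the straightened chart.

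The fix is to avoid straightening altogether and exploit that $v\cdot\nabla_x$ has constant coefficients in $x$: translations of $x$ by a fixed vector, dilations $x\mapsto(1-\ep)x$, and convolutions in $x$ all commute with $v\cdot\nabla_x$ up to harmless scalar factors. This is what the paper does: a partition of unity reduces to pieces of $U$ satisfying a quantitative star-shape (cone) condition $B((1-\ep)x,\ep)\subset U$; one then sets $f_\ep(x,v)=\int f((1-\ep)x+y,v)\,\zeta_\ep(y)\,dy$, checks a uniform bound $\|f_\ep\|_{H^1_\hyp(U)}\le C\|f\|_{H^1_\hyp(U)}$ by duality, and concludes via weak convergence and Mazur's lemma; only afterwards, when $v\cdot\nabla_x f_\ep$ has been upgraded to $L^2(U;L^2_\gamma)$, does one mollify in $v$. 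If you replace your straightening-plus-translation step by a translation (or dilation) in the original $x$-coordinates along the cone direction of a Lipschitz boundary patch, your argument goes through.
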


\begin{proof}
We focus on the case when $U \subset \R^d$ is a bounded $C^1$ domain. When $U = \T^d$, the proof can be done more simply by cutting off in $v$ and mollifying.

We decompose the proof into three steps.

\smallskip

\emph{Step 1.} In this step, we show that it suffices to consider the case when $U$ satisfies a convenient quantitative form of the star-shape property. For every $z \in \partial U$, there exist a radius $r > 0$ and a $C^1$ function $\Psi \in C^1(\R^{d-1};\R)$ such that, up to a relabelling of the axes, we have
\begin{equation*}  %\label{e.}
U \cap B(z,r) = \{x = (x_1,\ldots,x_d) \in B(z,r) \ : \ x_d > \Psi(x_1,\ldots,x_{d-1})\}.
\end{equation*}
Since $\Psi$ is a $C^1$ function, there exists $\delta > 0$ such that for every $x \in U \cap B(z,r)$, we have the cone containment property
\begin{equation}  
\label{e.cone.pty}
\Ll\{x + y \ : \ \frac{y_d}{|y|} \ge 1-\delta\Rr\} \cap B(z,r) \subset U.
\end{equation}
Setting
\begin{equation*}  %\label{e.}
z' = z + \Ll(0,\ldots,0,\frac r 2\Rr) \in \Rd,
\end{equation*}
and reducing $\delta > 0$ if necessary, we claim that for every $x \in U\cap B(z,\delta^2)$ and $\ep \in (0,1]$, we have
\begin{equation}
\label{e.star.quant}
B\Ll(x - \ep(x-z'),\delta^2 \ep \Rr) \subset U.
\end{equation}
Assuming the contrary, let $y \in \Rd$ be such that
\begin{equation*}  %\label{e.}
x + y \in B\Ll(x - \ep(x-z'),\delta^2 \ep \Rr) \setminus U.
\end{equation*}
Then
\begin{equation*}  %\label{e.}
\Ll|y + \ep(x-z')\Rr| \le \de^2\ep,
\end{equation*}
and therefore
\begin{align*}  %\label{e.}
\Ll|y  - \ep \Ll(0,\ldots,0,\frac r 2\Rr)\Rr| 
& \le \Ll|y + \ep(x-z) - \ep \Ll(0,\ldots,0,\frac r 2\Rr)\Rr| + \ep|x-z| 
\\
& 
\le \Ll|y + \ep(x-z') \Rr| + \ep|x-z| 
\\
&
\le 2 \de^2\ep.
\end{align*}
Taking $\delta > 0$ sufficiently small, we arrive at a contradiction with the cone property \eqref{e.cone.pty}. 
Now that \eqref{e.star.quant} is proved for every $x$ in a relative neighborhood of $z$, and up to a further reduction of the value of $\delta > 0$ if necessary, it is not difficult to show that one can find an open set $U'$ containing $z$ and $z'$ and such that \eqref{e.star.quant} holds for every $x \in U \cap U'$.

\smallskip

Summarizing, and using the fact that $U$ is a bounded set, we have shown that there exist families of bounded open sets $U_1,\ldots,U_M \subset \Rd$, of points $x_1,\ldots, x_M \in \Rd$ and a parameter $r > 0$ such that 
\begin{equation*}  %\label{e.}
U = \bigcup_{k = 1}^M U_i
\end{equation*}
and
for every $k \in \{1,\ldots,M\}$, $x \in U_k$ and $\eps \in (0,1]$,
\begin{equation*}  %\label{e.}
B\Ll(x - \ep(x-x_k),r\ep\Rr) \subset U_k.
\end{equation*}
By using a partition of unity, we can reduce our study to the case when this property is satisfied for the domain $U$ itself (in place of each of the $U_k$'s). By translation, we may assume that the reference point $x_k$ is at the origin, and by scaling, we may also assume that this property holds with $r = 1$. That is, from now on, we assume that for every $x \in U$ and $\ep \in (0,1]$, we have
\begin{equation}  
\label{e.easy.cone}
B \Ll( (1-\ep) x, \ep \Rr) \subset U.
\end{equation}

\smallskip

\emph{Step 2.} 
Let $f \in H^1_\hyp(U)$. We aim to show that $f$ belongs to the closure of the set $C^\infty_c(\bar U \times \Rd)$ in $H^1_\hyp(U)$. 
Without loss of generality, we may assume that $f$ is compactly supported in $\bar U \times \Rd$. Indeed, if $\chi \in C^\infty_c(\Rd;\R)$ is a smooth function with compact support and such that $\chi \equiv 1$ in a neighborhood of the origin, then the function $(x,v) \mapsto f(x,v) \chi(v/M)$ belongs to $H^1_\hyp(U)$ and converges to $f$ in $H^1_\hyp(U)$ as $M$ tends to infinity. 

\smallskip

Let $\zeta \in C^\infty_c(\Rd;\R)$ be a smooth function with compact support in $B(0,1)$ and such that $\int_\Rd \zeta = 1$. For each $\ep > 0$ and $x \in \Rd$, we write
\begin{equation}  
\label{e.def.zetaep}
\zeta_\ep(x) := \ep^{-d} \zeta(\ep^{-1} x),
\end{equation}
and we define, for each $\eps \in \Ll( 0,\frac 1 2 \Rr]$, $x \in U$ and $v \in \Rd$,
\begin{equation*}  %\label{e.}
f_\ep(x,v) := \int_{\Rd} f((1-\ep) x + y,v) \zeta_\ep(y) \, dy.
\end{equation*}
Note that this definition makes sense by the assumption of \eqref{e.easy.cone}.
The goal of this step is to show that $f$ belongs to the closure in $H^1_\hyp(U)$ of the convex hull of the set $\Ll\{f_\ep \ : \ \ep \in \Ll( 0,\frac 1 2 \Rr]\Rr\}$. By Mazur's lemma (see \cite[page~6]{ET}), it suffices to show that $f_\ep$ converges weakly to $f$ in $H^1_\hyp(U)$. Since it is elementary to show that $f_\ep$ converges to $f$ in the sense of distributions, this boils down to checking that $f_\ep$ is bounded in $H^1_\hyp(U)$. By Jensen's inequality,
\begin{align*}  %\label{e.}
\|\nabla_v f_\ep\|_{L^2(U;L^2_\ga)}^2 
& 
\le 
\int_{U\times \Rd} \int_\Rd |\nabla_v f|^2 \Ll( (1-\ep)x + y,v \Rr) \zeta_\ep(y) \, dy \, dx \, d\ga(v)
\\
& \le 
(1-\ep)^{-1} \|\nabla_v f\|_{L^2(U;L^2_\ga)}^2.
\end{align*}
In order to evaluate $\|v\cdot \nabla_x f_\ep\|_{L^2(U;H^{-1}_\ga)}$, we compute, for every $\varphi \in L^2(U;H^1_\ga)$,
\begin{align*}  %\label{e.}
\lefteqn{
\int_{U\times\Rd} v \cdot \nabla_x f_\ep \, \varphi \, dx \, d\ga
} \qquad & 
\\ & 
= (1-\ep) \int_{U\times \Rd} \int_\Rd v \cdot \nabla_x f \Ll( (1-\ep)x + y,v \Rr) \zeta_\ep(y)  \varphi(x,v) \, dy\, dx\, d\ga(v)
\\ & 
= \int_{U\times \Rd} \int_\Rd v \cdot \nabla_x f \Ll( x + y,v \Rr) \zeta_\ep(y) \, \varphi\Ll(\frac{x}{1-\ep},v\Rr) \, dy\, dx\, d\ga(v)
\\ & 
= \int_{U\times \Rd} \int_\Rd v \cdot \nabla_x f \Ll( y,v \Rr) \zeta_\ep(y-x) \, \varphi\Ll(\frac{x}{1-\ep},v\Rr) \, dy\, dx\, d\ga(v).
\end{align*}
Since, by Jensen's inequality,
\begin{equation*}  %\label{e.}
\int_{U\times \Rd} \Ll| \int_U \zeta_\ep(y-x) \varphi\Ll(\frac{x}{1-\ep},v\Rr) \, dx\Rr|^2 \, dy \, d\ga(v)
\le (1-\ep)^{-1} \|\varphi\|_{L^2(U;L^2_\ga)}^2
\end{equation*}
as well as
\begin{equation*}  %\label{e.}
\int_{U\times \Rd} \Ll| \int_U \zeta_\ep(y-x) \nabla_v\varphi\Ll(\frac{x}{1-\ep},v\Rr) \, dx\Rr|^2 \, dy \, d\ga(v)
\le (1-\ep)^{-1} \|\nabla_v \varphi\|_{L^2(U;L^2_\ga)}^2,
\end{equation*}
we deduce that
\begin{equation*}  %\label{e.}
\int_{U\times\Rd} v \cdot \nabla_x f_\ep \, \varphi \, dx \, d\ga \le (1-\ep)^{-\frac 1 2} \|v\cdot \nabla_x f\|_{L^2(U;H^{-1}_\ga)} \, \|\varphi\|_{L^2(U;H^1_\ga)},
\end{equation*}
and therefore 
\begin{equation*}  %\label{e.}
\|v\cdot \nabla_x f_\ep\|_{L^2(U;H^{-1}_\ga)} \le (1-\ep)^{-\frac 1 2} \|v\cdot \nabla_x f\|_{L^2(U;H^{-1}_\ga)}.
\end{equation*}
This completes the proof that the set $\Ll\{f_\ep \ : \ \ep \in \Ll( 0,\frac 1 2 \Rr] \Rr\}$ is bounded in $H^1_\hyp(U)$, and thus that $f$ belongs to the closed convex hull of this set.

\smallskip

\emph{Step 3.}
It remains to be shown that for each fixed $\ep \in \Ll( 0,\frac 1 2 \Rr]$, the function $f_\ep$ belongs to the closure in $H^1_\hyp(U)$ of the set $C^\infty_c(\bar U \times \Rd)$. For every $\eta \in (0,1]$, we define
\begin{align*}  %\label{e.}
f_{\ep,\eta}(x,v) & := \int_\Rd f_\ep(x,w) \zeta_\eta(v-w) \, dw 
\\
& = 
\int_\Rd \int_\Rd f(y,w) \zeta_\ep(y-(1-\ep)x) \zeta_\eta(v-w) \, dy \, dw.
\end{align*}
From the last expression, we see that $f_{\ep,\eta}$ belongs to $C^\infty_c(\bar U\times \Rd)$ (recall that $f$ itself has compact support in $\bar U \times \Rd$). Moreover, since $\nabla_v f_\ep \in L^2(U;L^2_\ga)$ and
\begin{equation*}  %\label{e.}
\nabla_v f_{\ep,\eta}(x,v) = \int_\Rd \nabla_v f_\ep(x,v-w)  \zeta_\eta(w) \, dw,
\end{equation*}
it is classical to verify that $\nabla_v f_{\ep,\eta}$ converges to $\nabla_v f_\ep$ in $L^2(U;L^2_\ga)$ as $\eta$ tends to $0$. By the definition of $f_\ep$ and the fact that $f_\ep$ is compactly supported, we have that $v \cdot \nabla_x f_\ep \in L^2(U;L^2_\ga)$. The same reasoning as above thus gives that $v \cdot \nabla_x f_{\ep,\eta}$ converges to $v \cdot \nabla_x f_\ep$ in $L^2(U;L^2_\ga)$, and thus a fortiori in $L^2(U;H^{-1}_\ga)$, as $\eta$ tends to $0$. This shows that 
\begin{equation*}  %\label{e.}
\lim_{\eta \to 0} \|f_{\ep,\eta} - f_\ep\|_{H^1_\hyp(U)} = 0
\end{equation*}
and thus completes the proof of the proposition. \end{proof}

\subsection{Besov Spaces}\label{ss.besov}

We shall use the following Besov-type spaces in the proof of the H\"{o}rmander inequality. The first of these spaces measures fractional regularity along the vector field $v\cdot\nabla_x$, while the second measures fractional regularity along $\nabla_x$. As the H\"ormander inequality is an interior estimate, we only consider these spaces in the cases that $U=\R^d$ or $U=\T^d$. To lighten the notation, we may frequently write $\left\| \cdot \right\|_{\Qvx}$ rather than $\left\| \cdot \right\|_{\Qvx(U)}$, as the choice of $U=\R^d,\T^d$ plays no role in the argument. The $Q$ stands for ``quotient."
\begin{definition}\label{d.Qvx}
For measurable $f:U\times\mathbb{R}^d\rightarrow\mathbb{R}$, we define
\begin{equation}\label{e.Qvx.norm}
    \left\| f \right\|_{\Qvx(U)}^2 := \sup_{0< \eta < \infty} \frac{1}{\eta^2} \iint_{U\times\mathbb{R}^d} \left( f(x+\eta^2 v, v) - f(x,v) \right)^2 \,d\gamma(v)\,dx\, .
\end{equation}
\end{definition}

\smallskip

\begin{definition}\label{d.Qx}
For measurable $f:U\times\mathbb{R}^d\rightarrow\mathbb{R}$, we define
\begin{equation}\label{e.Qx.norm}
    \left\| f \right\|_{\Qx(U)}^2 := \sup_{\substack{0< \eta <\infty \\ x'\in \mathbb{S}^{d-1}}} \frac{1}{\eta^{2}} \iint_{U\times\mathbb{R}^d} \left( f(x+\eta^3 x', v) - f(x,v) \right)^2\,d\gamma(v) \,dx\, .
\end{equation}
\end{definition}

\smallskip

\section{Functional inequalities for \texorpdfstring{$H^1_\hyp$}{H1hyp}}
\label{s.functional}

In this section we present the proofs of  Theorems~\ref{t.hypoelliptic.poincare} and~\ref{t.hormander}. 

\subsection{The Poincar\'e inequality for \texorpdfstring{$H^1_\hyp$}{H1hyp}}

We begin with the proof of Theorem~\ref{t.hypoelliptic.poincare}, the Poincar\'e-type inequality for the space $H^1_\hyp(U)$. 
The proof requires the following fact regarding the equivalence (up to additive constants) of the norms $\| h \|_{L^2(U)}$ and $\| \nabla h \|_{H^{-1}(U)}$.

\begin{lemma}
\label{l.est.L2.H-1}
Let $U$ be a Lipschitz domain or $U=\T^d$. Then there exists $C(U,d) < \infty$ such that for every $h \in L^2(U)$,
\begin{equation*}  %\label{e.}
\Ll\|h - (h)_U \Rr\|_{L^2(U)} \le C \|\nabla h\|_{H^{-1}(U)} \, .
\end{equation*}
% In the case $U=\T^d$, there exists $C(W,d) < \infty$ such that for every $h \in L^2(U)$,
% \begin{equation*}  %\label{e.}
% \Ll\|h - (h)_U \Rr\|_{L^2(\T^d)} \le C \|\nabla_x h\|_{H^{-1}(\T^d)} \, .
% \end{equation*}
\end{lemma}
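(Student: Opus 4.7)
The plan is to reduce to mean-zero functions and exploit the surjectivity of the divergence operator $\mathrm{div} : H^1_0(U;\R^d) \to L^2_0(U)$, where $L^2_0(U)$ denotes the subspace of $L^2(U)$ of functions with zero mean. This is the classical Ne\v{c}as/de Rham-type inequality and is a standard duality exercise; the only domain-dependent ingredient is the solvability of the divergence equation on $U$, which is available since $U$ is $C^{1,1}$ (hence Lipschitz).

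First I would observe that, since $\nabla h = \nabla (h - (h)_U)$, one may replace $h$ by $h - (h)_U$ and thereby assume $(h)_U = 0$. Next, I would invoke the Bogovski\u{\i}--type construction (equivalently, the de Rham lemma on a bounded Lipschitz domain): there exists $\phi \in H^1_0(U;\R^d)$ such that $\mathrm{div}\,\phi = h$ in $U$, together with the quantitative bound
\begin{equation*}
\left\| \phi \right\|_{H^1_0(U)} \le C(U,d)\,\left\| h \right\|_{L^2(U)}.
\end{equation*}
Using this $\phi$ as a test function, and recalling that integration by parts is justified because $\phi$ vanishes on $\partial U$ in the trace sense, one computes
\begin{equation*}
\left\| h \right\|_{L^2(U)}^2 = \int_U h\,\mathrm{div}\,\phi = -\left\langle \nabla h, \phi \right\rangle_{H^{-1}(U),H^1_0(U)} \le \left\| \nabla h \right\|_{H^{-1}(U)}\,\left\| \phi \right\|_{H^1_0(U)}.
\end{equation*}
Combining the two displays gives $\|h\|_{L^2(U)}^2 \le C \|\nabla h\|_{H^{-1}(U)} \|h\|_{L^2(U)}$, from which the claim follows by dividing out $\|h\|_{L^2(U)}$ (the case $h \equiv (h)_U$ being trivial).

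The main obstacle here is really only the existence of the Bogovski\u{\i} right inverse with the above $L^2 \to H^1_0$ bound; this is a classical result of Bogovski\u{\i}, and I would simply cite it (alternatively, one can argue by the closed range theorem applied to $\nabla : L^2(U)/\R \to H^{-1}(U;\R^d)$, noting that this operator is injective and that its adjoint $-\mathrm{div} : H^1_0(U;\R^d) \to L^2_0(U)$ has dense range, but quantifying closedness of the range still amounts to the same divergence-solvability statement). Everything else is direct duality, and no hypoellipticity is used—this lemma is purely a statement about the standard Sobolev scale on the bounded domain $U$.
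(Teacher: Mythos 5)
Your proof is correct and is essentially identical to the paper's: both reduce to $(h)_U = 0$, solve $\div \f = h$ with $\f \in H^1_0(U)^d$ and the bound $\|\f\|_{H^1(U)} \le C\|h\|_{L^2(U)}$ (the paper cites \cite{CDK} for this, you cite Bogovski\u{\i}), and conclude by the same duality computation. No differences worth noting.
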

\begin{proof}
We begin by considering the case $U$ is a Lipschitz domain. Without loss of generality, we assume that $(h)_U = 0$. We consider the problem 
\begin{equation} 
\label{e.boundary.pb}
\left\{
\begin{aligned}
& \nabla\cdot \f  = h & \mbox{in} & \ U, \\
& \f = 0 & \mbox{on} & \ \partial U \, . 
\end{aligned}
\right.
\end{equation}
Bogovskii's operator~\cite{bogovskii} (see also Galdi's book~\cite[Section III.3]{galdi}) guarantees the existence of
%According to \cite[Theorem~9.2 and Remark~9.3(iii)]{CDK}, this problem has
a solution~$\f$ with components in $H^1_0(U)$ satisfying the estimate
\begin{equation} 
\label{e.est.bdypb}
\left\| \f \right\|_{H^1(U)} \leq C \left\| h \right\|_{L^2(U)} \, . 
\end{equation}
Then we have
\begin{align*}  %\label{e.}
\|h\|_{L^2(U)}^2
= \int_U h \, \nabla \cdot \f 
= - \int_U \nabla h \cdot \f 
\leq \|\nabla h\|_{H^{-1}(U)} \, \|\f\|_{H^1(U)} \, .
\end{align*}
The conclusion then follows by \eqref{e.est.bdypb}. {In the case $U=\T^d$, the estimate follows from classical Littlewood-Paley estimates, and we omit the details.} \end{proof}

\begin{proof}
[Proof of Theorem~\ref{t.hypoelliptic.poincare}]
Let $f \in H^1_\hyp(U)$. In view of Proposition~\ref{p.density}, we can without loss of generality assume that $f$ is a smooth function. We decompose the proof into five steps.

\smallskip

\emph{Step 1.} We show that 
\begin{equation} 
\label{e.yosccontrol}
\left\| f   - \langle f \rangle_\gamma   \right\|_{L^2(U;L^2_\gamma)} \leq \left\| \nabla_v f\right\|_{L^2(U;L^2_\gamma)}.  
\end{equation}
By the Gaussian Poincar\'e inequality, we have for every $x\in U$ that
\begin{equation*} \label{}
\left\| f(x,\cdot)  - \langle f \rangle_\gamma(x)  \right\|_{L^2_\gamma} \leq \left\| \nabla_vf(x,\cdot) \right\|_{L^2_\gamma}.  
\end{equation*}
This yields~\eqref{e.yosccontrol} after integration over~$x\in U$. 

\smallskip

\emph{Step 2.} We show that
\begin{equation} 
\label{e.bracketuHminus1}
\left\| \nabla \langle f \rangle_\gamma \right\|_{H^{-1}(U)}
\leq
C\left( 
\left\| \nabla_v f \right\|_{L^2(U;L^2_\gamma)} +\left\| v \cdot \nabla_x f \right\|_{L^2(U;H^{-1}_\gamma)} 
\right).
\end{equation}
We select~$\xi_1, \ldots, \xi_d \in C^\infty_c(\Rd)$ satisfying
\begin{equation} 
\label{e.xi_i.2}
\int_{\Rd} v \xi_i(v)\,d\gamma(v) = e_i,
\end{equation}
and for each test function $\phi\in H^1_0(U)$ and $i \in \{1,\ldots,d\}$, we compute
\begin{align*}
\int_U \partial_{x_i} \phi(x) \langle f \rangle_\gamma(x)\,dx
& 
=
\int_{U\times \Rd} 
v \cdot \nabla_x \phi(x)  \langle f \rangle_\gamma(x)\xi_i(v) \,dx \,d\gamma(v)
\\ & 
= 
\int_{U\times \Rd} 
v \cdot \nabla_x \phi(x)  f(x,v)\xi_i(v) \,dx \,d\gamma(v)
\\ & \qquad 
+
\int_{U\times \Rd} 
v \cdot \nabla_x \phi(x)  \left( f(x,v) -\langle f \rangle_\gamma(x) \right) \xi_i(v) \,dx \,d\gamma(v).
\end{align*}
To control the first term on the right side, we perform an integration by parts to obtain 
\begin{align*}
\left| \int_{U\times \Rd} 
v \cdot \nabla_x \phi(x)  f(x,v)\xi_i(v) \,dx \,d\gamma(v) \right| 
&
=
\left| \int_{U\times\Rd}
\phi(x) \xi_i(v) \, v\cdot \nabla_x f(x,v) \,dx \, d\gamma(v) \right| 
\\ & 
\leq 
C
\left\| \phi \xi_i \right\|_{L^2(U;H^1_\gamma)}
\left\| v\cdot \nabla_x f \right\|_{L^2(U;H^{-1}_\gamma)}
\\ & 
\leq
C
\left\| \phi\right\|_{L^2(U)}
\left\|  \xi_i  \right\|_{H^1_\gamma}
\left\| v\cdot \nabla_x f \right\|_{L^2(U;H^{-1}_\gamma)}
\\ & 
\leq
C \left\| \phi \right\|_{L^2(U)} 
\left\| v\cdot \nabla_x f \right\|_{L^2(U;H^{-1}_\gamma)}.
\end{align*}
To control the second term, we use~\eqref{e.yosccontrol} and the fact that $\xi_i$ has compact support:
\begin{align*} \label{}
\lefteqn{
\left| \int_{U\times \Rd} 
v \cdot \nabla_x \phi(x)  \left( f(x,v) -\langle f \rangle_\gamma(x) \right) \xi_i(v) \,dx \,d\gamma(v) 
\right| 
} \qquad & 
\\ &
\leq
C\int_{U\times \Rd} |v| |\xi_i(v)| \left| \nabla_x\phi(x) \right| \left| f(x,v) - \langle f \rangle_\gamma(x)\right| \,dx\,d\gamma(v) 
\\ & 
\leq C \left\| \phi \right\|_{H^1(U)} \left\| \nabla_v f \right\|_{L^2(U;L^2_\gamma)}.
\end{align*}
Combining the above displays and taking the supremum over~$\phi\in H^1_0(U)$ with $\| \phi \|_{H^1(U)}\leq 1$ yields~\eqref{e.bracketuHminus1}.

\smallskip

\emph{Step 3.} We deduce from~Lemma~\ref{l.est.L2.H-1},~\eqref{e.yosccontrol} and~\eqref{e.bracketuHminus1} that 
\begin{align*} \label{}
\left\| f - \left(f\right)_U \right\|_{L^2(U;L^2_\gamma)}
&
\leq
\left\| f - \left\langle f \right\rangle_\gamma \right\|_{L^2(U;L^2_\gamma)}
+ \left\| \left\langle f \right\rangle_\gamma - \left( f \right)_U \right\|_{L^2(U)}
\\ & 
\leq
\left\| f - \left\langle f \right\rangle_\gamma \right\|_{L^2(U;L^2_\gamma)}
+ C\left\| \nabla \left\langle f \right\rangle_\gamma  \right\|_{H^{-1}(U)}
\\ & 
\leq 
C\left( 
\left\| \nabla_v f \right\|_{L^2(U;L^2_\gamma)} +\left\| v \cdot \nabla_x f \right\|_{L^2(U;H^{-1}_\gamma)} 
\right).
\end{align*}
This completes the proof of~\eqref{e.poincare.mean}. 

\smallskip

\emph{Step 4.} The remaining steps are specific to the case with boundary. To complete the proof of~\eqref{e.poincare.zerobndry}, we must show that, under the additional assumption that~$U\neq \T^d$ and $f \in H^1_{\hyp,0}(U)$, we have 
\begin{equation} 
\label{e.meancontrol}
\left| \left(f\right)_U \right| 
\leq
C\left( 
\left\| \nabla_v f \right\|_{L^2(U;L^2_\gamma)} 
+\left\| v \cdot \nabla_x f \right\|_{L^2(U;H^{-1}_\gamma)}
%+   \left\| \left\langle  v\cdot \nabla_x u \right\rangle_\gamma \right\|_{L^2(U)}
\right).
\end{equation}
Let $f_1$ be a test function belonging to $C^\infty_c\left(\overline{U} \times \Rd\right)$, to be constructed below, which satisfies the following:
\begin{equation} 
\label{e.wvanishhypbndry}
f_1 = 0 \quad\mbox{on} \ (\partial U \times \Rd) \setminus \partial_{\hyp} (U),
\end{equation}
\begin{equation} 
\label{e.testw.meanone}
\fint_U \int_{\Rd}  v\cdot \nabla_x f_1 \,d\gamma \,dx  = 1
\end{equation}
and, for some constant $C(U,d)<\infty$,
\begin{equation} 
\label{e.testw.L2bounds}
%\left\| f_1 \right\|_{L^2(U;H^1_{\mu})}
%+
\left\|    v\cdot \nabla_x f_1 \right\|_{L^2(U;L^2_\gamma)} \leq C.
\end{equation}
The test function~$f_1$ is constructed in Step~5 below. We first use it to obtain~\eqref{e.meancontrol}. 
We proceed by using~\eqref{e.testw.meanone} to split the mean of~$f$ as
\begin{equation*}
\left(f\right)_U 
= 
\fint_U \int_{\Rd} f \,  v\cdot \nabla_x f_1  \,d\gamma \,dx
- 
\fint_U \int_{\Rd} \left( f - \left( f \right)_U \right)  v\cdot \nabla_x f_1  \,d\gamma \,dx
\end{equation*}
and estimate the two terms on the right side separately. For the first term, we have
\begin{align*}
\lefteqn{
\left|
\fint_U \int_{\Rd} f \, v\cdot \nabla_x f_1 \,d\gamma \,dx
\right|
} \quad & 
\\ & 
= 
\left| -\fint_U \int_{\Rd}  f_1 \, v\cdot\nabla_x f \,d\gamma \,dx
+ \frac{1}{|U|} \int_{\partial U} \int_{\Rd} (v\cdot \n_U) ff_1 \,d\gamma \,dx \right|
\\ & 
= \left| \fint_U \int_{\Rd}  f_1 v\cdot\nabla_x f   \,d\gamma\,dx \right|,
\end{align*}
where we used that $(v\cdot \n_U) ff_1$ vanishes on $\partial U \times \Rd$ to remove the boundary integral. (Recall that by the definition of $H^1_{\hyp,0}(U)$, we can assume without loss of generality that the function $f$ is smooth, so the justification of the integration by parts above is classical.) We thus obtain that
\begin{align*} \label{}
\left| \fint_U \int_{\Rd}  f_1 v\cdot\nabla_x f   \,d\gamma\,dx \right|
\leq
\frac1{|U|}
 \left\| f_1 \right\|_{L^2(U;H^1_\gamma)} 
\left\| v\cdot \nabla_xf \right\|_{L^2(U;H^{-1}_\gamma)}. 
%=
%\left| \fint_U \int_{\Rd} \g\cdot \nabla_v f_1  \,d\gamma \,dx \right|
%\leq \left\| f_1 \right\|_{L^2(U;H^1_\gamma)} 
%\, \left\| \g \right\|_{L^2(U;L^2_\gamma)} 
%.
\end{align*}
This completes the estimate for the first term. For the second term, we use \eqref{e.testw.L2bounds} to get
\begin{align*}
\left|
\fint_U \int_{\Rd} \left( f - \left( f \right)_U \right) \,  v\cdot \nabla_x f_1 \,d\gamma \,dx
\right|
& 
\leq 
\left\| f - \left( f \right)_U \right\|_{L^2(U;L^2_\gamma)} 
\left\|  v\cdot \nabla_x f_1 \right\|_{L^2(U;L^2_\gamma)} 
\\
&
\leq C \left\| f - \left( f \right)_U \right\|_{L^2(U;L^2_\gamma)} ,
\end{align*}
which is estimated using the result of Step~3. Putting these together yields~\eqref{e.meancontrol}. 

\smallskip

\emph{Step 5.} 
We construct the 
test function~$f_1 \in C^\infty_c\left(\overline{U} \times \Rd\right)$
satisfying~\eqref{e.wvanishhypbndry},~\eqref{e.testw.meanone} and~\eqref{e.testw.L2bounds}. Fix $x_0 \in \partial U$ where $\n_U(x_0)$ is well defined. Since the unit normal $\n_U$ is continuous at $x_0$, there exist $v_0 \in \Rd$ and $r > 0$ such that for every $x,v \in \Rd$ satisfying $(x,v) \in (B_r(x_0) \cap \partial U) \times B_r(v_0)$, we have $v \cdot \n_U(x) > 0$. In other words, every $(x,v) \in (B_r(x_0) \cap \partial U) \times B_r(v_0)$ is such that $(x,v) \in \partial_\hyp U$. Observe that, for every $f_1 \in C^\infty_c(\Rd\times \Rd)$, we have
\begin{align*} \label{}
\fint_U \int_{\Rd} v\cdot \nabla_x f_1  \,d\gamma \,dx 
=  \frac{1}{|U|} \int_{\partial U} \int_{\Rd} (v\cdot \n_U) f_1  \,d\gamma \,dx.
\end{align*}
We select a function $f_1 \in C^\infty_c(\Rd\times \Rd)$ with compact support in $B_r(x_0) \times B_r(v_0)$ and such that $f_1 \ge 0$ and $f_1(x_0,v_0) = 1$. 
In this case, the integral on the right side above is nonnegative, since $f_1$ vanishes whenever $v\cdot \n_U \le 0$. In fact, since $f_1$ is positive on a set of positive measure on $\partial U \times \Rd$ (in the sense of the product of the $(d-1)$-dimensional Hausdorff and Lebesgue measures), the integral above is positive. Up to multiplying $f_1$ by a positive scalar if necessary, we can thus ensure that \eqref{e.testw.meanone} holds. It is clear that this construction also ensures that \eqref{e.wvanishhypbndry} and \eqref{e.testw.L2bounds} hold. \end{proof}

\begin{remark}  
\label{r.weaker.boundary.assumption}
As the argument above reveals, for the inequality \eqref{e.poincare.zerobndry} to hold, the assumption of $f \in H^1_{\hyp,0}(U)$ can be weakened: it suffices that $f$ vanishes on a relatively open piece of the boundary $\partial U \times \Rd$. The constant $C$ in \eqref{e.poincare.zerobndry} then depends additionally on the identity of this piece of the boundary where $f$ is assumed to vanish.
\end{remark}

\subsubsection{Poincar{\'e} inequality with confining potential}

It is also interesting to understand Theorem~\ref{t.hypoelliptic.poincare} in the global setting with confining potential.\footnote{A proof is also contained in~\cite{cao2019explicit} following the methods in the original version of this paper, which only discussed bounded domains.}

\emph{Only in this subsection}, we redefine $H^1_{\rm hyp}(\R^d)$ according to the norm
\begin{equation}
\label{eq:redefinedh1hyp}
    \| f \|_{H^1_\hyp(\R^d)} = \left\| f \right\|_{L^2_\sigma(\R^d;H^1_\gamma)} +
%+ 
%\left\| v \cdot \nabla_x f \right\|_{L^2_\sigma(\R^d;H^{-1}_\gamma)} + 
\| v \cdot \nabla_x f + \b \cdot \nabla_v f \|_{L^2_\sigma(\R^d;H^{-1}_\gamma)} ,
\end{equation}
and when $\b$ satisfies Assumption~\ref{a.conserv} with $U = \Rd$, and {$f \in L^1_\si(\Rd;L^1_\ga)$,  we use the notation
\begin{equation*}  %\label{e.}
(f)_{\R^d} := \int f \, d m .
\end{equation*}
}

\begin{proposition}[Poincar{\'e} with confining potential]
    \label{pro:poincarewithconfining}
Suppose that $\b$ satisfies Assumption~\ref{a.conserv} with $U = \R^d$, {the potential $W$ satisfies $W \in C^{1,1}(\R^d)$}, and that there exists a constant $C_W < \infty$ such that the following weighted Poincar{\'e} inequality holds for all $h \in H^1_\sigma(\R^d)$ with $(h)_{\Rd} = 0$:
\begin{equation}
    \label{eq:Poincareassumption}
   \int_U |\nabla_x W|^2 |h|^2 \, d\sigma \leq C_W \int_U |\nabla_x h|^2 \, d\sigma.
\end{equation}
Then there exists a constant $C(W,d) < \infty$ such that for all $f \in H^1_{\rm hyp}(\R^d)$, defined according to~\eqref{eq:redefinedh1hyp}, with $(f)_{\R^d} = 0$, 
\begin{equation} \notag
\left\| f \right\|_{L^2_\sigma(\R^d;L^2_\gamma)} 
\leq
C \left( \left\| \nabla_v f \right\|_{L^2_\sigma(\R^d;L^2_\gamma)} 
+
\left\| v \cdot \nabla_x f  + \b \cdot \nabla_v f \right\|_{L^2_\sigma(\R^d;H^{-1}_\ga)} \right) \, .
\end{equation}
\end{proposition}

First, we require an analogue of Lemma~\ref{l.est.L2.H-1}.

\begin{lemma}[Auxiliary lemma]
    \label{lem:auxiliaryh1lemma}
Under the assumptions of Proposition~\ref{pro:poincarewithconfining}, there exists $C(W,d) < \infty$ such that for every $h \in L^2_\sigma$,
\begin{equation*}  %\label{e.}
\Ll\|h - (h)_{\R^d} \Rr\|_{L^2_\sigma} \le C \|\nabla_x h\|_{H^{-1}_\sigma}.
\end{equation*}
\end{lemma}
\begin{proof}
Without loss of generality, we assume that $(h)_{\R^d} = 0$. Consider the operators
\begin{equation}\notag
    \tilde{A} = \nabla_x, \quad \tilde{A}^* = - \div_x - \b \cdot.
\end{equation}
We consider the problem 
\begin{equation} 
\label{e.boundary.pb1}
\tilde{A}^* \g  = h \quad \text{ in } \R^d,
\end{equation}
where we seek $\g \in H^1_\sigma$. The problem can be solved by defining $\g = \tilde{A} f$ and solving
\begin{equation}
    \label{eq:auxiliaryPDEforf}
    \tilde{A}^* \tilde{A} f = h \quad \text{ in } \R^d
\end{equation}
with $(f)_{\R^d} = 0$. By the Lax-Milgram lemma, there exists a solution $f \in H^1_\sigma$ with $(f)_{\R^d} = 0$ and $\| f \|_{H^1_\sigma} \leq C \| h \|_{H^{-1}_\sigma} \leq C \| h \|_{L^2}$. To demonstrate that $\g \in H^1_\sigma$, we commute a derivative $\p_i$ through~\eqref{eq:auxiliaryPDEforf}:
\begin{equation}
    \label{eq:auxiliaryPDEforpif}
   \tilde{A}^* \tilde{A} \p_i f = -\Delta_x \p_i f - \b \cdot \nabla_x \p_i f  = \p_i h +  \p_i \b \cdot \nabla_x f =: F,
\end{equation}
where $F$ is a forcing term in $H^{-1}_\sigma$.
Clearly, $\| \p_i h \|_{H^{-1}_\sigma} \leq C \| h \|_{L^2_\sigma}$.\footnote{This follows from integration by parts against a test function $g \in H^1_\sigma$ and the Poincar{\'e} inequality in~\eqref{eq:Poincareassumption}, which controls the term $\int \p_i W g h \, d\sigma$ appearing when $\p_i$ hits the weight.} For the commutator term, we have
\begin{equation}\notag
    \| \p_i \b \cdot \nabla_x f \|_{L^2_\sigma} \leq \| \p_i \b \|_{L^\infty} \| \nabla_x f \|_{L^2_\sigma} \leq C \| h \|_{H^{-1}_\sigma},
\end{equation}
where $C$ depends on the $C^{1,1}$ regularity of $W$. By the Lax-Milgram lemma (or energy estimates) applied to~\eqref{eq:auxiliaryPDEforpif} for each $i$, we have that
\begin{equation}
    \label{e.est.bdypb23}
    \| \nabla_x \g \|_{L^2_\sigma} \leq C \| \nabla_x^2 f \|_{L^2_\sigma} \leq C \| F \|_{H^{-1}_\sigma} \leq C \| h \|_{L^2_\sigma}.
\end{equation}
While $\g$ may not have zero average, it was already controlled in $L^2_\sigma$.
Finally, we have
\begin{align*}  %\label{e.}
\|h\|_{L^2_\sigma}^2
= \int_{\R^d} h \, \nabla \cdot \g  \, d\sigma
= - \int_{\R^d} \nabla h \cdot \g  \, d\sigma
\leq \|\nabla h\|_{H^{-1}_\sigma} \, \|\g\|_{H^1_\sigma}.
\end{align*}
The conclusion then follows by \eqref{e.est.bdypb23}. \end{proof}

\begin{proof}[Proof of Proposition~\ref{pro:poincarewithconfining}]

Let $f \in H^1_\hyp(\R^d)$, see~\eqref{eq:redefinedh1hyp}. By applying an approximation procedure with smooth cut-off in $x$ and $v$ and mollifying, we can without loss of generality assume that $f$ is a compactly supported, smooth function. Again, we decompose the proof into three steps. Step~1 is identical, so we skip to

\smallskip

\emph{Step 2.} We show that
\begin{equation} 
\label{e.bracketuHminus12}
\left\| \nabla \langle f \rangle_\gamma \right\|_{H^{-1}_\sigma(\R^d)}
\leq
C\left( 
\left\| \nabla_v f \right\|_{L^2_\sigma(\R^d;L^2_\gamma)} +\left\| (v \cdot \nabla_x + \b \cdot \nabla_v ) f \right\|_{L^2_\sigma(\R^d;H^{-1}_\gamma)} 
\right).
\end{equation}
We select~$\xi_1, \ldots, \xi_d \in C^\infty_c(\Rd)$ satisfying
\begin{equation}\notag
\label{e.xi_i.23}
\int_{\Rd} v \xi_i(v)\,d\gamma(v) = e_i \, ,
\end{equation}
and for each test function $\phi\in H^1_\sigma(\R^d)$ and $i \in \{1,\ldots,d\}$, we compute
\begin{equation}
    \label{eq:firstterminpoincareproof}
    \int \phi \p_{x_i} \langle f \rangle_\gamma \, dm = \int \phi \xi_i(v) v \cdot \nabla_x \langle f \rangle_\gamma \, dm  = - \int \phi \xi_i v \cdot \nabla_x (f - \langle f \rangle_\gamma) \, dm  + \int \phi \xi_i v \cdot \nabla_x f \, dm \, .
\end{equation}
We expand the second term on the right-hand side as
\begin{equation}
\label{eq:secondterminpoincareproof}
    \int \phi \xi_i v \cdot \nabla_x f \, dm  = \int \phi \xi_i (v \cdot \nabla_x + \b \cdot \nabla_v) f \, dm  - \int \phi \xi_i \b \cdot \nabla_v (f - \langle f \rangle_\gamma) \, dm \, ,
\end{equation}
where we use that $\b \cdot \nabla_v  \langle f \rangle_\gamma = 0$.
Combining~\eqref{eq:firstterminpoincareproof} and~\eqref{eq:secondterminpoincareproof}, we have
\begin{equation}\notag
    \int \phi \p_{x_i} \langle f \rangle_\gamma \, dm  = \int \phi \xi_i (v \cdot \nabla_x + \b \cdot \nabla_v) f\, dm  - \int \phi \xi_i (v \cdot \nabla_x + \b \cdot \nabla_v )(f - \langle f \rangle_\gamma) \, dm  = {\rm I} + {\rm II}.
\end{equation}
For ${\rm I}$, we have
\begin{equation}\notag
    \left| \int \phi \xi_i (v \cdot \nabla_x + \b \cdot \nabla_v) f\, dm  \right| \leq C \| \phi \xi_i \|_{L^2_\sigma(\R^d;H^1_\gamma)} \| (v \cdot \nabla_x + \b \cdot \nabla_v) f \|_{L^2_\sigma(\R^d;H^{-1}_\gamma)} \, .
\end{equation}
For ${\rm II}$, we integrate by parts across the measure $dm$:
\begin{equation}\notag
    - \int \phi \xi_i (v \cdot \nabla_x + \b \cdot \nabla_v )(f - \langle f \rangle_\gamma) \, dm  = \int v \cdot \nabla_x \phi \xi_i (f - \langle f \rangle_\gamma) \, dm  + \int \phi \b \cdot \nabla_v \xi_i (f - \langle f \rangle_\gamma)\, dm   = {\rm II}_a + {\rm II}_b \, .
\end{equation}
For ${\rm II}_a$, we use
\begin{equation}\notag
    \left| \int v \cdot \nabla_x \phi \xi_i (f - \langle f \rangle_\gamma) \, dm  \right| \leq C \| v \xi_i \|_{L^\infty_\gamma} \| \phi \|_{L^2_\sigma} \| f - \langle f \rangle_\gamma \|_{L^2_\sigma(\R^d;L^2_\gamma)} \, .
\end{equation}
For ${\rm II}_b$, we use
\begin{equation}\notag
    \left| \int \phi \b \cdot \nabla_v \xi_i (f - \langle f \rangle_\gamma) \, dm  \right| \leq \| |\nabla W| |\phi| \|_{L^2_\sigma}  \| \nabla_v \xi_i \|_{L^\infty_\gamma} \| f - \langle f \rangle_\gamma \|_{L^2_\sigma(\R^d;L^2_\gamma)} \, .
\end{equation}
We use the assumed Poincar{\'e} inequality~\eqref{eq:Poincareassumption} to control $\| |\nabla W| |\phi| \|_{L^2_\sigma}$ by $\| \phi \|_{H^1_\sigma}$. Then using \eqref{e.yosccontrol} concludes the proof of \eqref{e.bracketuHminus12}.

\smallskip

\emph{Step 3.} We deduce from Lemma~\ref{lem:auxiliaryh1lemma},~\eqref{e.yosccontrol} and~\eqref{e.bracketuHminus12} that 
\begin{align*} \label{}
\left\| f - \left(f\right)_U \right\|_{L^2_\sigma(\R^d;L^2_\gamma)}
&
\leq
\left\| f - \left\langle f \right\rangle_\gamma \right\|_{L^2_\sigma(\R^d;L^2_\gamma)}
+ \left\| \left\langle f \right\rangle_\gamma - \left( f \right)_{\R^d} \right\|_{L^2_\sigma}
\\ & 
\leq
\left\| f - \left\langle f \right\rangle_\gamma \right\|_{L^2_\sigma(\R^d;L^2_\gamma)}
+ C\left\| \nabla \left\langle f \right\rangle_\gamma  \right\|_{H^{-1}_\sigma}
\\ & 
\leq 
C\left( 
\left\| \nabla_v f \right\|_{L^2_\sigma(\R^d;L^2_\gamma)} +\left\| (v \cdot \nabla_x + \b \cdot \nabla_v) f \right\|_{L^2_\sigma(\R^d;H^{-1}_\gamma)} 
\right).
\end{align*}
This completes the proof. \end{proof}

\subsection{Interpolation and H\"ormander inequalities for \texorpdfstring{$H^1_\hyp$}{H1hyp}}
\label{ss.hormander}

In this subsection, we use the H\"ormander bracket condition to obtain a functional inequality which provides some interior spatial regularity for general $H^1_\hyp$ functions. Both the statement and proof of the inequality follow closely the ideas of H\"ormander~\cite{H}.  Other variants of H\"ormander's inequality have been previously obtained, see in particular~\cite{Bo} and \cite{abn21}. We remind the reader that our initial estimates are phrased in terms of the Besov-type norms defined in subsection~\ref{ss.besov} and are thus valid for $U=\T^d, \R^d$.
\smallskip

\begin{proposition}[Interpolation]\label{lem:interpolation}
For every $\delta>0$, there exists $C(d,\delta) < \infty$ such that for $U= \T^d, \R^d$ and any {smooth function} $u:U\times\R^d\rightarrow\R$, we have
\begin{equation}\label{eq:interpolation}
    \left\| u \right\|_{\Qvx(U)} \leq C \left( \left\| u \right\|_{L^2(U;H^1_\gamma)} + \left\| v\cdot\nabla_x u \right\|_{L^2(U;H^{-1}_\gamma)} \right) + \delta \left\| u \right\|_{\Qx(U)} \, .
\end{equation}
\end{proposition}
\begin{proof}
\textit{Step 1}. Let $\phi \in C^\infty_0((-1,1)^d)$ be a smooth, positive, radial function with unit $L^1$ norm.  For $t\in(0,\infty)$, we define $\phi_{t} u (x,v)$ by 
\begin{equation}\label{eq:mollifier}
\phi_{t} u (x,v) = \int_{\mathbb{R}^d} u (x+t^3 x',v) \phi(x') \, dx' \, , \notag  
\end{equation}
where in the case $U=\T^d$ we have periodicially extended $u$ to a function defined on all of $\R^d$. Using Jensen's inequality, we calculate that 
\begin{align}
   \left\| \phi_{t}u(x,v) - u(x,v) \right\|_{L^2(U;L^2_\gamma)}^2  &= \iint_{\R^d\times U} \left( \int_{\R^d} \phi(x')\left( u(x+t^3x',v) - u(x,v) \right) \,dx' \right)^2 \,dx\,d\gamma(v) \notag\\
   &\leq \iiint_{\R^d\times U\times \R^d} \phi(x')\left( u(x+t^3x',v) - u(x,v) \right)^2 \,dx'\,dx\,d\gamma(v) \notag\\
   & = \iiint_{\R^d\times U\times \R^d} \phi(x') t^2 \frac{1}{t^2} \left( u(x+t^3x',v) - u(x,v) \right)^2 \,dx'\,dx\,d\gamma(v) \notag\\
   & \leq \int_{\R^d} \phi(x') t^2 \left\| u \right\|_{\Qx}^2 \, dx'\, , \notag
\end{align}
and thus we see that
\begin{equation}\label{eq:mollifying:estimate}
    \left\| \phi_{t}u(x,v) - u(x,v) \right\|_{L^2 (U; L^2_\gamma)}^2 \leq t^{2} \left\| u \right\|^2_{\Qx(U)}\, .
\end{equation}
\smallskip

\textit{Step 2}.  Let 
\begin{equation}\notag
f(t) = \left\| u(x+t^2 v, v) - u(x,v) \right\|_{L^2(U; L^2_\gamma)}^2\, .
\end{equation}
For $t\in(0,\infty)$, it will suffice to show that 
\begin{equation}\label{eq:ft}
f(t) \leq t^2 \left( C \left( \left\| u \right\|_{L^2(U;H^1_\gamma)} + \left\| v \cdot\nabla_x u \right\|_{L^2(U;H^{-1}_\gamma)} \right) + \delta \left\| u \right\|_{\Qx(U)} \right)^2 \, .  
\end{equation}
Moreover, for $t \geq 1$, we have the obvious estimate $f(t) \leq 4 \| u \|_{L^2(U;L^2_\gamma)}^2$, so we consider only $t \in (0,1)$.
We may write that
\begin{equation}
    \label{eq:writingallmuhtermsout}
\begin{aligned}
        f(t) &\leq \left\| \phi_{\delta t}u(x+t^2 v,v) - u(x+t^2 v,v) \right\|_{L^2(U;L^2_\gamma)}^2 \\
    & \qquad + \left\| \phi_{\delta t}u(x+t^2 v,v) - \phi_{\delta t} u(x,v) \right\|_{L^2(U;L^2_\gamma)}^2 + \left\| \phi_{\delta t}u(x,v) - u(x,v) \right\|_{L^2(U;L^2_\gamma)}^2 \, . 
\end{aligned}
\end{equation}
By Step 1, the first and third terms of~\eqref{eq:writingallmuhtermsout} are bounded by
$$ \delta^2 t^{2} \left\| u \right\|_{\Qx}^2 \, .$$
\smallskip

\textit{Step 3}.  It remains to estimate the second term in~\eqref{eq:writingallmuhtermsout}. For $t\in(0,1)$ and $0\leq\tau\leq t^2$, consider
\begin{equation}\label{eq:Ftau}
    F(\tau) = \left\| \phi_{\delta t} u(x+\tau v, v) - \phi_{\delta t} u(x,v) \right\|^2_{L^2(U; L^2_\gamma)} \, ,
\end{equation}
where $F(t^2)$ is precisely the second term in~\eqref{eq:writingallmuhtermsout}. Since $F(0) = 0$, it will suffice to show that there exists $C(d,\delta) < \infty$ such that % and an implicit constant independent of $\delta$ such that
$$ F'(\tau) \leq C^2 \left(\left\| u \right\|^2_{L^2(U;H^1_\gamma)} + \left\| v \cdot\nabla_x 
u \right\|^2_{L^2(U;H^{-1}_\gamma)}\right) + \delta^2 \left\| u \right\|_{\Qx}^2 \, . $$
We have that
\begin{align}
    F'(\tau) &= \iint_{\mathbb{R}^d\times U} \left( \phi_{\delta t}u(x+\tau v,v) - \phi_{\delta t} u(x,v) \right) v\cdot \nabla_x \left(\phi_{\delta t} u\right) (x+\tau v,v) \,dx\,d\gamma(v)\notag\\
    &= \iint_{\mathbb{R}^d\times U} \left(\phi_{\delta t}u(x,v) - \phi_{\delta t}u(x-\tau v,v) \right) v\cdot \nabla_x \left(\phi_{\delta t} u\right)(x,v)\, dx\,d\gamma(v)\, . \notag
\end{align}
Since $[v\cdot\nabla_x,\phi_{\delta t}]u = \left[ \nabla_v, \phi_{\delta t} \right]u=0$ and we have a bound on $\left\| v\cdot\nabla_x u \right\|_{L^2(U; H^{-1}_\gamma)}$, we will achieve the desired estimate for $F'(\tau)$ if we can bound
\begin{equation}
    \label{eq:thatthingiwanttobound}
    \left( \phi_{\delta t}u(x,v)-\phi_{\delta t}u(x-\tau v,v) \right)
\end{equation}
in $L^2(U;H^1_\gamma)$. The only non-trivial estimate comes when the $\nabla_v$ lands on the $x$ coordinate of the second term in~\eqref{eq:thatthingiwanttobound}, which we may write out as
\begin{align}
    \int_{\R^d}& -\tau \nabla_x u \left( x+(\delta t)^3 x'-\tau v,v \right) \phi(x')\,dx'\notag\\
    &= - \int_{\R^d} \frac{\tau}{(\delta t)^3 } \nabla_{x'}u(x+(\delta t)^3 x'-\tau v,v) \phi(x') \,dx'\notag\\
    &=\int_{\R^d}  \frac{\tau}{(\delta t)^3 } u(x+(\delta t)^3 x'-\tau v,v) \nabla_{x'} \phi(x') \,dx' \notag\\
    &= \int_{\R^d}  \frac{\tau}{(\delta t)^3} \left(u(x+(\delta t)^3 x'-\tau v,v)-u(x-\tau v,v)\right) \nabla_{x'} \phi(x') \,dx' \, . \notag 
\end{align}
But by Step 1, this is bounded in $L^2(U;\ltwog)$ by a constant multiple of
$$ \frac{\tau}{\delta t^3 } |t| \left\| u \right\|_{\Qx} \leq \frac{1}{\delta^3} \left\| u \right\|_{\Qx}\, , $$
where we have used the assumption that $\tau\leq t^2$. Note that in order to absorb the $\sfrac{1}{\delta^3}$ in the denominator, we may appeal to the Cauchy-Schwarz and Young inequalities in front of $\left\| v\cdot \nabla_x u \right\|_{L^2_x(U;H^{-1}_\gamma)}$, which leads to the estimate \eqref{eq:interpolation} after %abusing notation slightly and
modifying $\delta$ to absorb any implicit constants. \end{proof}

With Proposition~\ref{lem:interpolation} in hand, we can now prove a H\"ormander inequality which provides regularity in the $x$ variable, measured in the $\Qx$ space.  The $H^\alpha$ estimate in Theorem~\ref{t.hormander} for $\alpha<\sfrac{1}{3}$ will be an immediate corollary, and essentially amounts to converting $B^{\sfrac{1}{3}}_{2,\infty}$-type regularity to $B^\alpha_{2,2}$-type regularity.  Following~\cite{H}, the proof of Theorem~\ref{t.hormander} is based on the splitting of a first-order finite difference in the~$x$ variable into finite differences which are either in the~$v$ variable, or in the~$x$ variable in the direction of~$v$. Explicitly, we have
\begin{align}
\label{e.firstordersplitting}
f(x+t^3y,v) - f(x,v) 
& 
= 
f(x+t^3y,v) - f(x+t^3y, v-ty) 
\\ & \quad \notag
+ f(x+t^3y, v-ty) - f(x+t^3y + t^2(v-ty), v-ty)
\\ & \quad \notag
+ f(x+t^2v, v-ty) - f(x+tv, v) 
\\ & \quad \notag
+ f(x+t^2v,v) - f(x,v) \, . 
\end{align}
Notice that the right side consists of four finite differences, two for each of the derivatives $\nabla_v$ and $v\cdot \nabla_x$ which we can expect to control by the $L_2(U;H^1_\gamma)$ and $\Qvx$ norms, respectively. The fact that the increment on the left is of size~$t^3$ and those on the right side are of size~$t$ and $t^2$ suggests that we may expect to have one-third derivative in the statement of Theorem~\ref{t.hormander}, which we are able to obtain in a Besov sense with the $\Qx$ norm.  The exponent $\sfrac{1}{3}$ is optimal, although it may be possible to improve the endpoint regularity from $B^{\sfrac{1}{3}}_{2,\infty}$-type to $B^{\sfrac{1}{3}}_{2,2}$ using more advanced microlocal techniques.

\smallskip

The relation~\eqref{e.firstordersplitting} is a special case of H\"ormander's bracket condition introduced in~\cite{H}, which for the particular equation we consider here is quite simple to check. Indeed, let $X_1,\ldots,X_d$, $V_1,\ldots,V_d$ denote the canonical vector fields and~$X_0$ be the vector field $(x,v) \mapsto (v,0)$. Then the H\"ormander bracket condition is implied by the identity
\begin{equation}
\label{e.hormander.bracket}
[V_i,X_0] = X_i.
\end{equation}
This is a local version of the identity \eqref{e.firstordersplitting}. More precisely, for every vector field $Z$, if we denote by $t \mapsto \exp(tZ)$ the flow induced by the vector field $Z$ on $\R^d \times \Rd$, then 
\begin{multline}  
\label{e.exp.horm}
\exp(-t V_i) \exp(-t X_0) \exp(t V_i) \exp (t X_0) (x,v) 
\\
= (x,v) + t^2 [V_i,X_0] (x,v) + o(t^2) \qquad (t \to 0) \, .
\end{multline}
For the vector fields of interest, $Z \in \{X_0, X_1,\ldots,X_d,V_1,\ldots,V_d\}$, the flows take the very simple form
\begin{equation*}  %\label{e.}
\exp(tZ)(x,v) = (x,v) + tZ(x,v),
\end{equation*}
the relation \eqref{e.exp.horm} becomes an identity (that is, the term $o(t^2)$ is actually zero), and {loosely,} this identity can be rephrased in the form of~\eqref{e.firstordersplitting}. {The only difference is that, to exploit that our functions have only $\sfrac{1}{2}$ derivatives in the $v \cdot \nabla_x$ direction, it is advantageous to flow in the direction $v \cdot \nabla_x$ with speed $t$ rather than unit speed.}

\smallskip

\begin{proposition}[Besov-Type H\"ormander Inequality]\label{p.hormander}
There exists a dimensional constant $C(d) < \infty$ such that for $U=\T^d,\R^d$ and any smooth function $u:U\times\R^d\rightarrow\R$, we have the estimate
\begin{equation}\label{eq:hormander:new}
    \left\| u \right\|_{\Qx(U)} \leq C \left( \left\| u \right\|_{\Qvx(U)} + \left\| u \right\|_{L^2_x(U; H^1_\gamma)} \right) \, .
\end{equation}
\end{proposition}
\begin{proof}[{Proof of Proposition~\ref{p.hormander}}]
Let $f(x,v)=u(x,v)\gamma^{\sfrac{1}{2}}(v)$, and choose $\eta\in(0,\infty)$ and $x'\in\mathbb{S}^{d-1}$. Then we may write that
\smallskip
\begin{align}
    \left\| u(x+\eta^3x',v) - u(x,v) \right\|_{L^2(U;L^2_\gamma)} = \left\| f(x+\eta^3x',v) - f(x,v) \right\|_{L^2(U;L^2)} \, , \notag
\end{align}
and
\begin{align}
    f(x+\eta^3 x',v) - f(x,v) &= f(x+\eta^3 x',v) - f(x+\eta^3 x',v-\eta x') \notag\\& \quad
    + f(x+\eta^3 x',v-\eta x') - f(x+\eta^3 x' + \eta^2(v-\eta x'),v-\eta x')\notag\\ & \quad
    +f(x+\eta^2 v,v-\eta x') - f(x+\eta^2 v,v)\notag\\ & \quad
    + f(x+\eta^2 v,v) - f(x,v) \, . \label{e.hormander.string}
\end{align}
Dividing by $\eta$, integrating in $L^2(U;L^2(\R^d))$, and appealing to \eqref{e.weighted.difference} bounds the first term:
\begin{align}
    \frac{1}{\eta^2} \iint_{\R^d\times U} \left(f(x+\eta^3x', v)-f(x+\eta^3x',v-\eta x') \right)^2 \,dx\,dv \leq C \left\| \nabla_v u \right\|_{L^2(U;L^2_\gamma)}^2 \, , \notag
\end{align}
with a similar bound holding for the third term. Dividing again by $\eta$ and integrating in $L^2(U;L^2(\R^d))$ yields the bound
\begin{align}
    \frac{1}{\eta^2} \iint_{\R^d\times U} \left(f(x+\eta^3x', v-\eta x')-f(x+\eta^3x'+\eta^2(v-\eta x'),v-\eta x') \right)^2 \,dx\,dv \leq \left\| u \right\|_{\Qvx(U)}^2 \, , \notag
\end{align}
with a similar bound holding for the fourth term.  Appealing to \eqref{eq:interpolation} with a suitably small choice of $\delta$ concludes the proof. \end{proof}

To obtain the statements in Theorem~\ref{t.hormander} for $\alpha<\sfrac{1}{3}$, we must work in $H^\alpha_x$ rather than $(B^\alpha_{2,\infty})_x$ spaces of fractional differentiability, and so we introduce the Banach space-valued fractional Sobolev spaces, defined as follows: for every domain~$U\subseteq\Rd$,~$\al\in (0,1)$, Banach space~$X$ with norm $\| \!\cdot\! \|_X$ and $u\in L^2(U;X)$, we define the seminorm 
%the fractional Sobolev space $H^s(U;X)$ is defined with respect to the norm
\begin{equation}
\label{e.def.frac.seminorm}
\left\llbracket u \right\rrbracket_{H^{\al}(U;X)}
:=
\left( 
\int_U\int_U
\frac{\left\| u(x)-u(y) \right\|_{X}^2}{|x-y|^{d+2\al}}  \,dx\,dy
\right)^{\sfrac12}
\end{equation}
and the norm
\begin{equation*}
\left\| u \right\|_{H^{\al}(U;X)} 
:= 
\left( \left\| u \right\|_{L^2(U;X)}^2 
+
\left\llbracket u \right\rrbracket_{H^{\al}(U;X)}^2 \right)^{\sfrac12}.
\end{equation*}
We then define the fractional Sobolev space
\begin{equation}
\label{e.def.Halpha}
H^\al(U;X) := \left\{ u\in L^2(U;X) \,:\,  \left\| u \right\|_{H^{\al}(U;X)} < \infty \right\}.
\end{equation}
The space $H^\al(U;X)$ is a Banach space under the norm~$\| \!\cdot\!\|_{H^\al(U;X)}$. We understand that $H^0(U;X) = L^2(U;X)$. We also set
\begin{equation*}  %\label{e.}
\|u\|_{H^{1+\alpha}(U;X)} := \left(  \|u\|_{L^2(U;X)}^2 + \|\nabla u\|_{H^\alpha(U;X)}^2 \right)^{\sfrac 12},
\end{equation*}
and define the Banach space $H^{1+\al}(U;X)$ as in \eqref{e.def.Halpha}. We may now use Proposition~\ref{p.hormander} to prove the non-endpoint estimates from Theorem~\ref{t.hormander}.
\begin{proof}[Proof of Theorem~\ref{t.hormander}]
We have that for $\alpha<\sfrac{1}{3}$,
\begin{align}
    \left\llbracket u \right\rrbracket_{H^\alpha(U;L^2_\gamma)}^2  &= \iint_{U\times U} \frac{\left\| u(x,\cdot) - u(y,\cdot) \right\|_{L^2_\gamma}^2}{|x-y|^{d+2\alpha}} \,dx\,dy \notag\\
    &= \iint_{U\times U} \frac{\left\| u(x'+y,\cdot) - u(y,\cdot) \right\|_{L^2_\gamma}^2}{|x'|^{d+2\alpha}} \,dx'\,dy \notag\\
    &= \iint_{\{|x'|<1\}\times U} \frac{\left\| u(x'+y,\cdot) - u(y,\cdot) \right\|_{L^2_\gamma}^2}{|x'|^{d+2\alpha}} \,dy\,dx' \notag\\
    &\qquad + \iint_{\{|x'|\geq 1\}\times U} \frac{\left\| u(x'+y,\cdot) - u(y,\cdot) \right\|_{L^2_\gamma}^2}{|x'|^{d+2\alpha}} \,dy\,dx' \notag\\
    &\leq \int_{\{|x'|<1\}} \frac{|x'|^{\sfrac{2}{3}}\left\| u \right\|_{\Qx}^2}{|x'|^{d+2\alpha}}\, dx' \quad +\quad C(\alpha) \left\| u \right\|^2_{L^2(U;L^2_\gamma)} \notag\\
    &\leq C(\alpha) \left( \left\|  u \right\|_{L^2(U;H^1_\gamma)}^2 + \left\| v\cdot\nabla_x u \right\|_{L^2(U;H^{-1}_\gamma)}^2 \right) \leq C(\alpha) \| u \|_{H^1_\hyp(U)}^2 \, , \notag
\end{align}
concluding the proof. Notice that we have restricted the domain to $U=\R^d,\T^d$ in the above estimates. \end{proof}

For the purposes of interpolation, we also need to consider fractional Sobolev spaces in the velocity variable. As discussed in the arguments leading to \eqref{e.weighted.difference}, the relevant spaces are weighted by the measure $\ga$, which is strongly inhomogeneous.  %Since \eqref{e.weighted.difference} only provided one half of a full finite difference characterization for $H^1_\gamma$ and does not consider fractional differentiability, we
%choose instead to
Because of this difficulty, we use the following definition. For each $f \in L^2_\ga$ and $t > 0$, we set
\begin{equation*}  %\label{e.}
K(t,f) := \inf \Ll\{ \|f_0\|_{L^2_\ga} + t \|f_1\|_{H^1_\ga} \ : \ f = f_0 + f_1, \ f_0 \in L^2_\ga, \ f_1 \in H^1_\ga \Rr\} ,
\end{equation*}
and, for every $\al \in (0,1)$, we define
\begin{equation}
\label{e.def.frac.sob.v}
\|f\|_{H^\al_\ga} := \Ll( \int_0^\infty \Ll( t^{-\al} K(f,t) \Rr) ^2 \, \frac {dt}{t} \Rr)^\frac 1 2.
\end{equation}
We also define $H^{-\al}_\ga$ to be the space dual to $H^{\al}_\ga$. 

\begin{comment}
\smallskip

As it turns out, we typically refer to such fractional spaces only on the subspace of functions (or distributions) $f$ that have a uniform cutoff in the velocity variable. That is, we typically consider, for a fixed ${v_0} \in [1,\infty)$, spaces of the form
\begin{equation}  
\label{e.cutoff.v}
H^{\al}_\ga \cap \{f \in L^2_\ga \ : \ f \1_{\{|v| > {v_0}\}} = 0\}.
\end{equation}
In this case, we could as well work with a definition of fractional spaces in the velocity variable constructed in the same way as the fractional spaces in the spatial variable, based on finite differences as in \eqref{e.def.frac.seminorm}. Indeed, with the additional restriction on the velocity variable in \eqref{e.cutoff.v}, the problem of the heterogeneity of the measure $\gamma$ disappears, and the norm based on finite differences is indeed equivalent to the norm defined in \eqref{e.def.frac.sob.v}. The proof of this classical fact can be found for instance in \cite[Example~1.8]{lunardi}, using also that, for every $f \in L^2_\ga$ satisfying $f \1_{\{|v| > {v_0}\}} = 0$ and $w \in \Rd$, we have
\begin{align}  
\label{e.bounded.translation}
\|f(\cdot+w)\|_{L^2_\ga}^2 
& = (2\pi)^{-\frac d 2} \int_\Rd f^2(v+w) \exp \Ll( -\frac{|v|^2}{2} \Rr) \, dv 
\\ & \notag
\le (2\pi)^{-\frac d 2} \int_\Rd f^2(v)  \, dv  
\le \exp \Ll( \frac{{v_0}^2}{2} \Rr) \|f\|_{L^2_\ga}^2.
\end{align}
\end{comment}

\smallskip

We may utilize interpolation to obtain embeddings into other similar spaces of positive regularity in both variables. In particular, appealing to Theorem~\ref{t.hormander} and the interpolation inequality
\begin{equation*} \label{}
\left\| f \right\|_{H^{\theta \beta}(U;H^{1-\theta}_\gamma)} 
\leq 
\left\| f \right\|_{H^{\beta}(U;L^{2}_\gamma)}^{\theta}
\left\| f \right\|_{L^2(U;H^{1}_\gamma)}^{1-\theta} \qquad \theta\in[0,1]\, \qquad U=\T^d,\R^d \, ,
\end{equation*}
immediately implies the following estimate. 

\begin{corollary}
[{H\"ormander inequality for $H^1_\hyp$}]
\label{c.sobolev.16}
Let $\alpha \in \left[ 0,\tfrac 13 \right)$ and $U=\T^d,\R^d$. There exists a constant~$C(\alpha,d)<\infty$ such that for every~$\theta \in [0,1]$ and every~$f\in H^1_\hyp(U)$, we have the estimate 
\begin{equation*}
%\label{e.sobolev.norm.interp}
\left\| f \right\|_{H^{\theta \alpha}(U;H^{1-\theta}_\gamma)}
\leq
C \left\| f \right\|_{H^1_\hyp(U)} \, .
\end{equation*}
\end{corollary}

Observe that, by introducing a cutoff function in the spatial variable, we also obtain analogous embeddings for bounded domains $U\subset \R^d$, such as 
\begin{equation*}
H^1_\hyp(U) \hookrightarrow H^{\alpha}(U_\delta;L^2_\gamma) \, ,
\end{equation*}
valid for every $\alpha <\frac13$ and $\delta>0$, where $U_\delta:= \left\{ x\in U\,:\,\dist(x,\partial U) > \delta \right\}$. 

\subsection{Compact embedding of \texorpdfstring{$H^1_\hyp$}{H1hyp} into \texorpdfstring{$L^2(U;L^2_\ga)$}{L2}}
Using the results of the previous subsection, we show that the embedding $H^1_{\hyp}(U) \hookrightarrow L^2(U;L^2_\gamma)$ is compact. In this section, we assume that $U\subseteq\Rd$ is a bounded $C^1$ domain or $\T^d$. 
\smallskip

\begin{proposition}[{Compact embedding of $H^1_{\hyp}(U)$ into $L^2(U;L^2_\gamma)$}] 
The inclusion map $H^1_{\hyp}(U) \hookrightarrow L^2(U;L^2_\gamma)$ is compact. 
\label{p.compactembed}
\end{proposition}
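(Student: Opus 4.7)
The plan is to combine the H\"ormander inequality (Corollary~\ref{c.sobolev.16}) with a uniform control of the velocity tails and a standard Rellich-Kondrachov-type argument on bounded sets. Let $(f_n)$ be a bounded sequence in $H^1_\hyp(U)$; the goal is to extract a subsequence converging in $L^2(U;L^2_\gamma)$.

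\emph{Step 1 (uniform vanishing of velocity tails).} I first establish that
\begin{equation*}
\sup_n \int_U \int_{\{|v|>R\}} |f_n|^2 \, d\gamma\,dx \xrightarrow[R\to\infty]{} 0.
\end{equation*}
This rests on a Gaussian Sobolev embedding: the Gaussian log-Sobolev inequality (equivalently, hypercontractivity of the Ornstein-Uhlenbeck semigroup) yields, for every $p\in [2,\infty)$, the bound $\|h\|_{L^p_\gamma} \le C_p \|h\|_{H^1_\gamma}$. Squaring and integrating in $x$ gives $\int_U \|f_n(x,\cdot)\|_{L^p_\gamma}^2 \,dx \le C_p^2 \|f_n\|_{H^1_\hyp(U)}^2$. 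H\"older's inequality in $v$ then provides
\begin{equation*}
\int_U \int_{\{|v|>R\}} |f_n|^2\,d\gamma\,dx \le \gamma(\{|v|>R\})^{1-2/p} \int_U \|f_n(x,\cdot)\|_{L^p_\gamma}^2\,dx,
\end{equation*}
which vanishes uniformly in $n$ as $R\to\infty$.

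\emph{Step 2 (truncation in $v$).} For every integer $R\ge 1$, let $\chi_R\in C^\infty_c(\Rd;[0,1])$ satisfy $\chi_R\equiv 1$ on $B_R$ and $\supp \chi_R \subseteq B_{R+1}$. The sequence $g_n^R := \chi_R f_n$ is bounded in $H^1_\hyp(U)$ (the Leibniz rule is routine, using that multiplication by a smooth compactly supported $\chi_R$ is bounded on $H^1_\gamma$ and on $H^{-1}_\gamma$), and satisfies $g_n^R \indc_{\{|v|>R+1\}}=0$. By Step~1, $\sup_n \|f_n - g_n^R\|_{L^2(U;L^2_\gamma)} \to 0$ as $R\to\infty$.

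\emph{Step 3 (compactness at fixed $R$).} After multiplying by a smooth spatial cutoff equal to one on $U$ and extending by zero, we view $g_n^R$ as an element of $H^1_\hyp(\Rd)$ with compact support in $\Rd\times B_{R+1}$. Applying Corollary~\ref{c.sobolev.16} with $\theta = \tfrac12$ and some $\alpha\in(0,\tfrac13)$ yields a uniform bound in $H^{\alpha/2}(\Rd;L^2_\gamma)$; we also have a uniform bound in $L^2(\Rd;H^1_\gamma)$ from the definition of $H^1_\hyp$. Since $\gamma$ is comparable to Lebesgue measure on $B_{R+1}$, these bounds translate into a uniform bound of the extensions of $g_n^R$ in a standard fractional Sobolev space on a bounded subset of $\Rd\times\Rd$ (via Aubin-Lions-Simon, or directly: the two bounds combine to give uniform control of a positive fractional derivative in each variable, and standard Rellich applies on the bounded product domain). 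Consequently, there is a subsequence of $(g_n^R)$ that converges in $L^2(U;L^2_\gamma)$.

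\emph{Step 4 (diagonal extraction).} Applying Step~3 successively for $R=1,2,\ldots$ and extracting a diagonal subsequence $(f_{n_k})$, we obtain that, for every $R$, the sequence $(g_{n_k}^R)_k$ converges in $L^2(U;L^2_\gamma)$. Combined with the uniform velocity-tail estimate of Step~2, this implies that $(f_{n_k})$ is Cauchy in $L^2(U;L^2_\gamma)$. The space $L^2(U;L^2_\gamma)$ being complete, $(f_{n_k})$ converges, proving compactness of the embedding.

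\emph{Main obstacle.} The conceptually nontrivial point is Step~1: since the velocity space $\Rd$ is unbounded, one must exploit the Gaussian weight to obtain tightness, and this requires a functional inequality strictly stronger than the Poincar\'e inequality, namely a Gaussian Sobolev-type embedding. The remaining ingredients --- the H\"ormander inequality and Rellich compactness on bounded sets --- are straightforward once the tail issue is handled.
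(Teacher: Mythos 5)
Your overall architecture (velocity-tail control, truncation in $v$, fractional regularity plus Rellich on a bounded set, diagonal extraction) matches the paper's Steps 1 and 4, but there are two genuine gaps, one of which is the crux of the whole proof.

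The serious gap is in Step 3. The zero-extension of $g_n^R\in H^1_\hyp(U)$ to $\Rd\times\Rd$ is in general \emph{not} an element of $H^1_\hyp(\Rd)$: the jump across $\partial U$ produces a surface-measure contribution $f\,(v\cdot\n_U)\,d\mathcal H_{d-1}$ in $v\cdot\nabla_x$ of the extension, which does not belong to $L^2(\Rd;H^{-1}_\gamma)$ unless the trace is controlled --- and whether the trace of an $H^1_\hyp(U)$ function is controlled by its $H^1_\hyp(U)$ norm is precisely the open Question~\ref{q.trace}. A cutoff ``equal to one on $U$'' cannot fix this; the H\"ormander inequality can only be applied after multiplying by a cutoff that \emph{vanishes near $\partial U$}, which yields compactness only in $L^2(U_\delta;L^2_\gamma)$ for interior subdomains $U_\delta$. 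To conclude, one must additionally prove that the $L^2$ mass of $f_n$ in the boundary layer $\{\dist(x,\partial U)<\theta\}$ vanishes as $\theta\to 0$, uniformly over the bounded sequence. This is the hard part of the paper's proof (its Steps 2 and 3): the region where $|v\cdot\n_U|$ is small is handled by an Orlicz/log-Sobolev smallness argument, and the complementary region by an integration-by-parts computation against test functions built from $\dist(\cdot,\partial U)$ and the sign of $v\cdot\n_U$. This ingredient is entirely absent from your proposal, and without it the argument only gives compactness in $L^2_{\mathrm{loc}}$.

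Step 1 also contains a factual error: there is no Gaussian Sobolev embedding $\|h\|_{L^p_\gamma}\le C_p\|h\|_{H^1_\gamma}$ for any $p>2$. (Take $h(v)=\exp(\alpha|v|^2/4)$ with $\alpha<1$ close to $1$: it lies in $H^1_\gamma$ but not in $L^p_\gamma$ for $p>2/\alpha$.) The log-Sobolev inequality only yields the $L^2\log L$ improvement, which is exactly why the paper works with the Orlicz pair $(F,F^*)$ and the generalized H\"older inequality~\eqref{e.HolderandLSI}. Your tail estimate is nonetheless true and easily repaired: either use the Orlicz--H\"older route, or more simply combine the bound $\int_{U\times\Rd}|v|^2f^2\,dx\,d\gamma\le C\|f\|_{L^2(U;H^1_\gamma)}^2$ of~\eqref{e.extrav2integrab} with Chebyshev's inequality to get $\int_U\int_{\{|v|>R\}}f^2\,d\gamma\,dx\le CR^{-2}\|f\|^2_{H^1_\hyp(U)}$.
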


The proof is straightforward on $\T^d$. First, approximate by functions in $C^\infty_0(\T^d \times \R^d)$. Next, we use the embedding $H^1_{\rm hyp}(\T^d) \subset H^\alpha(\T^d \times B_{v_0})$ for all $v_0 \in [1,+\infty)$. Finally, we apply the standard Rellich compactness theorem. Hence, we focus only on bounded $C^1$ domains $U \subset \R^d$ below.

\smallskip

Before we give the proof of Proposition~\ref{p.compactembed}, we need to review some basic facts concerning the logarithmic Sobolev inequality and a generalized H\"older inequality for Orlicz norms. The logarithmic Sobolev inequality states that, for some~$C<\infty$, 
\begin{equation}
\label{e.LSI}
\int_{\Rd} f^2(v) \log\left( 1+ f^2(v) \right) \,d\gamma(v) 
\leq C \int_{\Rd} \left| \nabla f \right|^2\,d\gamma(v),
\quad \forall f\in H^1_\gamma\ \mbox{with} \ \left\| f \right\|_{L^2_\gamma} =1.   
\end{equation}
Let $F:\R \to [0,\infty)$ denote the (strictly) convex function
\begin{equation*}
F(t) := |t| \log \left( 1 + |t| \right). 
\end{equation*}
Let $F^*$ denote its dual convex conjugate function, defined by
\begin{equation*}
F^*(s):= \sup_{t\in\R} \left( st - F(t) \right). 
\end{equation*}
Then~$(F,F^*)$ is a \emph{Young pair} (see~\cite{RR}), that is, both~$F$ and~$F^*$ are nonnegative, even, convex, satisfy~$F(0)=F^*(0)=0$ as well as
\begin{equation*}
\lim_{|t| \to\infty} |t|^{-1}F(t) = \lim_{|s|\to \infty} |s|^{-1} F^*(s) = \infty.
\end{equation*}
Moreover, both~$F$ and~$F^*$ are strictly increasing on~$[0,\infty)$ and in particular vanish only at~$t=0$. Given any measure space $(X,\omega)$, the \emph{Orcliz spaces}~$L_{F}(X,\omega)$ and $L_{F^*}(X,\omega)$, which are defined by the norms
\begin{equation*}
\left\{
\begin{aligned}
& \left\| g \right\|_{L_{F}(X,\omega)} 
:=
\inf \left\{ t>0\,:\, \int_X F\left( t^{-1}g \right)\,d\omega \leq F(1) \right\}, 
\quad \mbox{and} \\
& \left\| g \right\|_{L_{F^*}(X,\omega)} 
:=
\inf \left\{ t>0\,:\, \int_{X} F^*\left( t^{-1}g \right)\,d\omega \leq F^*(1) \right\}
\end{aligned}
\right.
\end{equation*}
are dual Banach spaces and the following generalized version of the H\"older inequality is valid (see~\cite[Proposition 3.3.1]{RR}):
\begin{equation*}
\int_{X} \left| gg^* \right| \,d\omega
\leq 
\left\| g \right\|_{L_{F}(X,\omega)} \left\| g^* \right\|_{L_{F^*}(X,\omega)}, 
\quad \forall g\in L_{F}(X,\omega),\, g^* \in L_{F^*}(X,\omega).
\end{equation*}
The logarithmic Sobolev inequality~\eqref{e.LSI} may be written in terms of the Orcliz norm as
\begin{equation*}
\left\| f^2 \right\|_{L_{F}(\Rd,\gamma)}
\leq 
C\left( \left| \langle f \rangle_\gamma \right|^2 + \left\| \nabla f \right\|_{L^2_\gamma}^2\right), 
\quad \forall f\in H^1_\gamma .
\end{equation*}
The previous two displays imply that 
\begin{align}
\label{e.HolderandLSI}
\left( \int_{U\times \Rd}
g
\left| f \right|^2
\,dx\,d\gamma(v) \right)^{\frac12}
& 
\leq 
C \left\| g \right\|_{L_{F^*}(U\times \Rd, dx  d\gamma)}^{\frac12} 
\left\| f \right\|_{L^2(U;H^1_\gamma)}.
\end{align}
We do not identify $F^*$ with an explicit formula, although we notice that the inequality 
\begin{equation*}
s(t+1) \leq \exp(s) + t \log (1+t), \quad \forall s,t\in(0,\infty) 
\end{equation*}
implies that 
\begin{equation*}
F^*(s) \leq \exp(s) - s.
\end{equation*}
This allows us in particular to obtain from~\eqref{e.HolderandLSI} that 
\begin{equation}
\label{e.extrav2integrab}
\left( \int_{U\times \Rd}
|v|^2 
\left| f \right|^2
\,dx\,d\gamma(v) \right)^{\frac12}
\leq 
C \left\| f \right\|_{L^2(U;H^1_\gamma)}.
\end{equation}
We also point out that~\eqref{e.extrav2integrab} also implies the existence of $C(d,U)<\infty$ such that, for every $f\in L^2(U;L^2_\gamma)$, 
\begin{equation}
\label{e.Hm1.isgood}
\left\| \nabla_v f \right\|_{L^2(U;H^{-1}_\gamma)} 
\leq
C \left\| f \right\|_{L^2(U;L^2_\gamma)}. 
\end{equation}

\smallskip

We now turn to the proof of Proposition~\ref{p.compactembed}.

\begin{proof}[{Proof of Proposition~\ref{p.compactembed}}]
For each $\theta > 0$, we denote 
\begin{equation}
\label{e.def.Utheta}
U_\theta:=\left\{ x \,:\, \dist(x,\partial U) < \theta \right\}.
\end{equation}
Since~$U$ is a~$C^1$ domain, we can extend the outer normal~$\n_U$ to a globally $C^0$ function on~$\overline{U}$. We can moreover assume that, for some $\theta_0(U) > 0$, this extension $\n_U$ coincides with the gradient of the mapping $x \mapsto - \dist(x,\partial U)$ in $U_{\theta_0}$.

\smallskip

By Proposition~\ref{p.density}, we may work under the qualitative assumption that all of our~$H^1_\hyp(U)$ functions belong to~$C^\infty_c(\overline{U}\times\Rd)$. Select $\ep>0$ and a sequence $\{ f_n \}_{n\in\N} \subseteq H^1_{\hyp}(U)$ satisfying 
\begin{equation*} \label{}
\sup_{n\in\N} \left\| f_n \right\|_{H^1_\hyp(U)} \leq 1.
\end{equation*}
We will argue that there exists a subsequence $\{ f_{n_k} \}$ such that 
\begin{equation} 
\label{e.embedwts}
\limsup_{k\to \infty} \sup_{i,j\geq k} \left\| f_{n_i} - f_{n_j} \right\|_{L^2(U;L^2_\gamma)} \leq \ep \, . 
\end{equation}
The proposition may then be obtained by a diagonalization argument.

\smallskip

\emph{Step 1.} 
We claim that there exists~${v_0}\in [1,\infty)$ such that, for every $f\in H^1_{\hyp}(U)$,  
\begin{equation*} \label{}
\left( \int_{U} \int_{\Rd \setminus B_{v_0}}  \left|f(x,v) \right|^2 \, dx\, d\gamma(v) \right)^{\frac12}
\leq 
\frac{\ep}{3} \left\| f \right\|_{H^1_\hyp(U)} \, .
\end{equation*}
Indeed, applying~\eqref{e.HolderandLSI}, we find that 
\begin{equation*} \label{}
\left( \int_{U} \int_{\Rd \setminus B_{v_0}}  \left|f(x,v) \right|^2 \, dx\, d\gamma(v) \right)^{\frac12}
\leq 
C  \left\| \indc_{U \times(\Rd\setminus {v_0})} \right\|_{L_{F^*}(U\times \Rd, dx  d\gamma)}^{\frac12}  \left\| f \right\|_{H^1_\hyp(U)}.
\end{equation*}
Taking ${v_0}$ sufficiently large, depending on~$\ep$, ensures that
\begin{equation*}
C  \left\| \indc_{U \times(\Rd\setminus {v_0})} \right\|_{L_{F^*}(U\times \Rd, dx  d\gamma)}^{\frac12} \leq \frac\ep3.
\end{equation*}

\smallskip

\emph{Step 2.} We next claim that there exists $\delta \in \left(0,\tfrac12\right]$ such that, for every $f\in H^1_{\hyp}(U)$,  
\begin{equation*} \label{}
\left( \int_{U} \int_{\Rd }  \left|f(x,v) \right|^2 \indc_{\left\{ \left| \n_U\cdot v \right| < \delta \right\}} \, dx\, d\gamma(v) \right)^{\frac12}
\leq 
\frac{\ep}{3} \left\| f \right\|_{H^1_\hyp(U)}.
\end{equation*}
The argument here is similar to the estimate in Step~1, above. We simply apply~\eqref{e.HolderandLSI} after choosing $\delta$ small enough that 
\begin{equation*}
C  \left\| \indc_{\left\{ \left| \n_U\cdot v \right| < \delta \right\}} \right\|_{L_{F^*}(U\times \Rd, dx  d\gamma)}^{\frac12} 
\leq 
\frac\ep3. 
\end{equation*}

\smallskip

\emph{Step 3.} We next show that, for every~$\delta>0$,  there exists~$\theta > 0$ such that, for every function $f\in H^1_{\hyp}(U)$,  
\begin{equation}
\label{e.channelflush}
\left( \int_{U} \int_{\Rd}  \left|f(x,v) \right|^2 \indc_{\left\{ \left| \n_U\cdot v \right| \geq \delta \right\}} \indc_{\left\{ \dist(x,\partial U) < \theta \right\}} \, dx\, d\gamma(v) \right)^{\frac12}
\leq 
\frac{\ep}{3} \left\| f \right\|_{H^1_\hyp(U)}.
\end{equation}
For $\theta \in \Ll( 0,\frac{\theta_0}{2}\Rr]$ to be taken sufficiently small in terms of $\delta > 0$ in the course of the argument, we let $\varphi\in C^{1,1}(\overline{U})$ be defined by
\begin{equation*}
\varphi(x) := - \eta \left( \dist(x,\partial U) \right), 
\end{equation*}
where $\eta\in C^\infty_c([0,\infty))$ satisfies 
\begin{equation*}
0\leq \eta \leq 2\theta, \quad
0 \leq \eta' \leq 1, \quad 
\eta(x) = x \ \mbox{on} \ \left[0,\theta \right], \quad
\eta' = 0 ,  \  \mbox{on} \ [2\theta,\infty).
\end{equation*}
We have $-2\theta \leq \varphi \leq 0$. Moreover, by the definition of $\theta_0$ below \eqref{e.def.Utheta},  its gradient~$\nabla \varphi$ is proportional to~$\n_U$ in $U$, it vanishes outside of $U_{2\theta}$, and $\nabla \varphi = \n_U$ in $U_\theta$. 
We next select another test function $\chi\in C^{\infty}_c([0,\infty))$ satisfying
\begin{equation*}
0\leq \chi \leq 1, \quad
\chi \equiv 0 \ \mbox{on} \ \left[0,\tfrac12\delta\right], \quad
\chi\equiv 1 \  \mbox{on} \ [\delta,\infty), \quad 
|\chi'|  \leq  \delta^{-1},
\end{equation*}
and define 
\begin{equation*}
\psi_{\pm}(x,v) := \chi \left( \left( v \cdot \n_U(x) \right)_\pm \right),
\end{equation*}
where for $r\in\R$, we use the notation $r_- := \max(0,-r)$ and $r_+ := \max(0,r)$. Observe that 
\begin{equation*}
\left| \nabla_v \psi_{\pm}(x,v) \right| 
=
\left| \chi' \left( \left( v \cdot \n_U(x) \right)_\pm \right) \right| \left| \n_U(x) \right|
 \leq
 C\delta^{-1}.
\end{equation*}
Therefore
\begin{align*}
\left\| \varphi f \psi_{\pm} \right\|_{L^2(U;H^1_\gamma)} 
&
\leq
C\left( \left\| \varphi f \psi_{\pm} \right\|_{L^2(U;L^2_\gamma)} 
+
\left\| \varphi \nabla_v \left( f \psi_{\pm} \right) \right\|_{L^2(U;L^2_\gamma)} \right)
\\ & 
\leq 
C \theta \left(
\left\| f \right\|_{L^2(U;L^2_\gamma)} 
+\left\| \nabla_v f  \right\|_{L^2(U;L^2_\gamma)} 
+\left\| f \nabla_v \psi_{\pm} \right\|_{L^2(U;L^2_\gamma)} 
\right)
\\ & 
\leq C\theta\delta^{-1} \left\| f \right\|_{L^2(U;H^1_\gamma)},
\end{align*}
and hence
\begin{equation*}
\left| \int_{U \times\Rd} \varphi f \psi_{\pm} v\cdot \nabla_x f \,dx \,d\gamma(v) \right| \leq 
C \theta\delta^{-1} \left\| f \right\|_{H^1_\hyp(U)}^2. 
\end{equation*}
On the other hand, 
\begin{align*}
\lefteqn{
\int_{U \times\Rd} \varphi f \psi_{\pm} v\cdot \nabla_x f \,dx \,d\gamma(v)
} \qquad & 
\\ &
=
- \frac12 
\int_{U \times\Rd} f^2 v\cdot \nabla_x \left( \varphi \psi_{\pm} \right) \,dx \,d\gamma(v)
\\ & 
= 
- \frac12 
\int_{U \times\Rd} \varphi f^2 v\cdot \nabla_x \psi_{\pm}  \,dx \,d\gamma(v)
-
\frac12 
\int_{U \times\Rd} \psi_{\pm} f^2 v\cdot \nabla\varphi \,dx \,d\gamma(v).
\end{align*}
Since $\left| v\cdot \nabla_x \psi_{\pm}(x,v) \right| 
\leq C\delta^{-1} |v|^2$, we have, by~\eqref{e.extrav2integrab}, 
\begin{align*}
\left| \int_{U \times\Rd} \varphi f^2 v\cdot \nabla_x \psi_{\pm}  \,dx \,d\gamma(v) \right| 
\leq 
C\theta\delta^{-1}\int_{U \times\Rd} |v|^2 f^2 \,dx \,d\gamma(v) 
\leq 
C\theta\delta^{-1} \left\| f \right\|_{H^1_\hyp(U)}^2. 
\end{align*}
We deduce that 
\begin{equation*}
\left| \int_{U \times\Rd} \psi_{\pm} f^2 v\cdot \nabla\varphi \,dx \,d\gamma(v) \right| 
\leq 
C\theta\delta^{-1} \left\| f \right\|_{H^1_\hyp(U)}^2.
\end{equation*}
Finally, we observe from the properties of $\varphi$ and $\psi_{\pm}$ that  
\begin{align*}
\lefteqn{
\int_{U} \int_{\Rd}  \left|f(x,v) \right|^2 \indc_{\left\{ \left| \n_U\cdot v \right| \geq \delta \right\}} \indc_{\left\{ \dist(x,\partial U) < \theta \right\}} \, dx\, d\gamma(v)
}
\qquad & 
\\ & 
\leq 
\delta^{-1} \left(  \left| \int_{U \times\Rd} \psi_{+} f^2 v\cdot \nabla\varphi \,dx \,d\gamma(v) \right| 
+ 
\left| \int_{U \times\Rd} \psi_{-} f^2 v\cdot \nabla\varphi \,dx \,d\gamma(v) \right| \right)
\\ & 
\leq C \theta \delta^{-2} \left\| f \right\|_{H^1_\hyp(U)}^2. 
\end{align*}
Taking $\theta = c \ep^2 \delta^2$ for a sufficiently small constant $c>0$ yields the claimed inequality~\eqref{e.channelflush}. 

\smallskip

\emph{Step 4.} By the results of the previous three steps, to obtain~\eqref{e.embedwts} it suffices to exhibit a subsequence $\{ f_{n_k} \}$ satisfying
\begin{equation*} \label{}
\limsup_{k\to \infty} \sup_{i,j\geq k} 
\int_{U_\theta \times B_{v_0}} \left|  f_{n_i} - f_{n_j} \right|^2 \,dx\,d\gamma(v)  
=0. 
\end{equation*}
This is an immediate consequence of  Corollary~\ref{c.sobolev.16} and the compactness of the embedding $H^{\sfrac1{10}}\left(U_\theta;H^{\sfrac13}_\gamma\right) \hookrightarrow L^2(U_\theta;L^2_\gamma(B_{v_0}))$ (see for instance \cite[Theorem~2.32]{AF}). \end{proof}

\section{The Kramers equation}
\label{s.wellpose}

In this section, we present two proofs of the existence of weak solutions in $H^1_\hyp(\T^d)$ to the Kramers equation
\begin{equation}\label{e.kramers.vproof}
 -\Delta_v f + v\cdot \nabla_v f + v\cdot\nabla_x f + \b\cdot\nabla_v f = g^* \, , 
\end{equation}
where $g^*\in L^2(\T^d; H^{-1}_\gamma)$ {satisfies $\iint_{\T^d\times\R^d} g^* \, dm =0$} (recall that the weighted mean of $g^*$ is well defined by duality since the function $1$ belongs to $L^2(\T^d;H^1_\gamma)$). The first proof uses the abstract Lions-Lax-Milgram theorem and a modification of~\eqref{e.kramers.vproof} with a penalization term $\nu f$. The hypoelliptic energy estimates are used in sending the parameter $\nu$ to zero. This approach is partly inspired by~\cite{Car}. The second proof uses a dual variational approach which characterizes the weak solutions of \eqref{e.kramers.vproof} as the minimizers of a natural energy under an appropriate constraint, in analogy with the discussion following the statement of Theorem~\ref{t.hypo.DP}. In both cases, the Poincar\'e inequality from Theorem~\ref{t.hypoelliptic.poincare} provides the necessary coercivity.

Throughout this section, the force field $\b(x) = -\nabla W(x)$ is as in Assumption~\ref{a.conserv}. In particular, $\b$ depends only on~$x$ and is conservative. Let $dm$ be as defined in \eqref{e.def.m}.

\subsection{The Lions-Lax-Milgram approach}
We recall the abstract version of Lions' representation theorem from
\cite[Theorem 3.1, p. 109]{Showalter}.

\begin{lemma}[Lions' representation theorem]
\label{lem:lionslaxmilgram}
Let $H$ be a Hilbert space and $\Phi$ a pre-Hilbert space. Let $E : H \times \Phi \to \R$ be a bilinear form satisfying the continuity criterion
\begin{equation}
    \label{eq:bilinearformcont}
    E(\cdot,\phi) \in H^* \text{ for all } \phi \in \Phi \, .
\end{equation}
Then the following two properties are equivalent:
\begin{itemize}
    \item (Coercivity) We have
\begin{equation}
    \label{eq:bilinearformcoer}
    \inf_{\| \phi \|_{\Phi} = 1} \sup_{\| h \|_{H} \leq 1} |E(h,\phi)| \geq c > 0 \, .
\end{equation}
\item (Solvability)
For each $L \in \Phi^*$, there exists $f \in H$ such that
\begin{equation}
\label{eq:weakformoftheproblem}
    E(f,\phi) = L(\phi), \quad \text{ for all } \phi \in \Phi \, .
\end{equation}
\end{itemize}
\end{lemma}

Notice that uniqueness and stability estimates are not guaranteed by Lemma~\ref{lem:lionslaxmilgram} itself; they are concluded {a posteriori}.

\begin{proof}[Proof of Theorem~\ref{t.hypo.DP}]
We split the argument into steps; in the first step, we solve a penalized problem, and in the second, we send the penalization parameter $\nu$ to zero. 
\smallskip

\emph{Step 1}. Consider the penalized problem
\begin{equation}
    \label{eq:penalizedPDE}
    (v \cdot \nabla_x + \b \cdot \nabla_v)f + \nu f = g^* + \Delta f - v \cdot \nabla_v f \, 
\end{equation}
posed on the torus $\T^d$ where $\nu \in (0,1]$. We define the following objects:
\begin{enumerate}
\item the \emph{test function space}
\begin{equation}\notag
    \Phi = C^\infty_0(\T^d \times \R^d)
\end{equation}
with inner product
\begin{equation}
    \label{eq:sameinnerproduct}
    (\phi,\psi) = \iint_{\T^d\times\R^d} \nabla_v \phi \cdot \nabla_v \psi \, dm + \iint_{\T^d\times\R^d} \phi \psi  \, dm \, ,
\end{equation}
    \item the \emph{solution space}
\begin{equation}\notag
    H = \{ h \in L^2_\sigma(\T^d;H^1_\gamma) : (h)_{\T^d} = 0 \}
\end{equation}
with inner product~\eqref{eq:sameinnerproduct},
\item the penalized \emph{bilinear form}
\begin{equation}\notag
    E(h,\phi) = \iint_{\T^d\times\R^d} \nabla_v h \cdot \nabla_v \phi \, dm + \nu \iint_{\T^d\times\R^d} h\phi \, dm - \iint_{\T^d\times\R^d} h (v \cdot \nabla_x + \b \cdot \nabla_v) \phi \, dm \, ,
\end{equation}
\item and the \emph{linear functional}
\begin{equation}\notag
    L = g^* \in L^2_\sigma(\T^d;H^{-1}_\gamma) \text{ with } (g^*)_{\T^d} = 0 \, .
\end{equation}
\end{enumerate}
It is not difficult to verify that $E$ is continuous~\eqref{eq:bilinearformcont} and coercive~\eqref{eq:bilinearformcoer}. Indeed, the key features are that (i) the anti-symmetric operator $v \cdot \nabla_x + \b \cdot \nabla_v$ hits the test function $\phi$, and (ii) the penalization term $\nu \iint_{\T^d\times\R^d} |\phi|^2 \, dm$ controls the `lower part' ($L^2(\T^d;L^2_\gamma)$) of the norm after testing with $\phi$. Hence, Lemma~\ref{lem:lionslaxmilgram} guarantees the existence of a solution $f \in H$ to~\eqref{eq:weakformoftheproblem}, which is the distributional formulation of the penalized equation~\eqref{eq:penalizedPDE}.

From the equation itself, we recover that $(v \cdot \nabla_x + \b \cdot \nabla_v) f \in L^2_\sigma(\T^d;H^{-1}_\gamma)$, and therefore, $f \in H^1_{\rm hyp}(\T^d)$ qualitatively. By the density of smooth functions in $H^1_\hyp(\T^d)$, this is enough regularity\footnote{To justify this, one may use the density of test functions demonstrated in Proposition~\ref{p.density}.} to  multiply~\eqref{eq:penalizedPDE} by $f$ and integrate by parts to demonstrate the basic energy estimate:
\begin{equation}
    \label{eq:basicpenalizedenergyestimate}
     \iint_{\T^d\times\R^d} |\nabla_v f|^2 \, dm + \nu \iint_{\T^d\times\R^d} |f|^2 \, dm \leq C \nu^{-1} \| g^* \|_{L^2_\sigma(\T^d;H^{-1}_\gamma)}^2 \, , %\left\| \nabla_v f \right\|_{L^2(\T^d;L^2_\gamma)} \, ,
\end{equation}
which guarantees that the solution is unique.\footnote{The estimate~\eqref{eq:basicpenalizedenergyestimate} can be made more convenient, without the factor $\nu^{-1}$, if $\la g^* \ra_\gamma \equiv 0$.} From the equation itself, we have
\begin{align}
    \| (v \cdot \nabla_x + \b \cdot \nabla_v) f \|_{L^2_\sigma(\T^d;H^{-1}_\gamma)} &\leq C  \| A^* A f \|_{L^2_\sigma(\T^d;H^{-1}_\gamma)} + \left\| g^* \right\|_{L^2(\T^d;H^{-1}_\gamma)} + C \nu \| f \|_{L^2_\sigma(\T^d;H^{-1}_\gamma)} \notag\\
    &\overset{\eqref{eq:basicpenalizedenergyestimate}}{\leq} C  \| g^* \|_{L^2_\sigma(\T^d;H^{-1}_\gamma)} \, , \label{eq:basicpenalizedenergyestimate2}
\end{align}
where the constant $C$ changes from line to line. Then~\eqref{eq:basicpenalizedenergyestimate},~\eqref{eq:basicpenalizedenergyestimate2}, and the hypoelliptic Poincar{\'e} inequality for mean-zero functions imply that
\begin{equation}\notag
    \| f \|_{H^{1}_{\rm hyp}(\T^d)} \leq C \| g^* \|_{L^2_\sigma(\T^d;H^{-1}_\gamma)} \, .
\end{equation}
\smallskip

\emph{Step 2}. Next, we consider $\nu \to 0^+$. Let $f^\nu$ denote the unique solution of the penalized problem~\eqref{eq:penalizedPDE}. Subtracting two solutions $f^{\nu_1}$ and $f^{\nu_2}$, we have that the difference $\tilde{f}^{\nu_1,\nu_2}$ solves the equation
\begin{equation}
    \label{eq:penalizedPDEdifference}
    (v \cdot \nabla_x + \b \cdot \nabla_v)\tilde{f}^{\nu_1,\nu_2} + (\nu_1 f^{\nu_1} - \nu_2 f^{\nu_2}) = (\Delta - v \cdot \nabla_v) \tilde{f}^{\nu_1,\nu_2}.
\end{equation}
We may regard $\nu_1 f^{\nu_1} - \nu_2 f^{\nu_2}$ as a forcing term which is $O(\nu_1 + \nu_2)$ in $L^2_\sigma(\T^d;H^{-1}_\gamma)$. By the hypoelliptic energy estimates for~\eqref{eq:penalizedPDEdifference}, we have
\begin{equation}\notag
    \| \tilde{f}^{\nu_1,\nu_2} \|_{H^{1}_{\rm hyp}(\T^d)} = O(\nu_1 + \nu_2) \, .
\end{equation}
Choosing $\nu = 2^{-k}$, the sequence $(f_k)$ of solutions 
to~\eqref{eq:penalizedPDE} with penalization $\nu = 2^{-k}$ is Cauchy in $H^1_{\rm hyp}(\T^d)$ and therefore converges to a solution $f$ in $H^1_{\rm hyp}(\T^d)$ with $(f)_{\T^d} = 0$. By passing to the distributional limit in each term in~\eqref{eq:penalizedPDE}, we find that $f$ solves~\eqref{e.kramers.vproof} in the sense of distributions. The proof is complete. \end{proof}

\begin{remark}[Role of the penalization]
    \label{rmk:roleofpenalization}
    The above proof requires a \emph{coercive} bilinear form $E$ which, in particular, controls the $L^2$ norm. The {a priori} estimates for solutions of~\eqref{e.kramers.vproof} do indeed control the $L^2$ part of the norm through the hypoelliptic Poincar{\'e} inequality, but the control of $\| (v \cdot \nabla_x + \b \cdot \nabla_v ) f \|_{L^2(\T^d;H^{-1}_\gamma)}$ is encoded by the PDE itself rather than the bilinear form $E$, which only encodes the energy estimate. This is why we include the penalization $\nu f$. In some sense, control of  $\| (v \cdot \nabla_x + \b \cdot \nabla_v ) f \|_{L^2(\T^d;H^{-1}_\gamma)}$ is concluded {a posteriori}.
    
    In the time-dependent case, one can skip the penalization by instead considering the equation satisfied by $e^t f$; see Proposition~\ref{pro:solvabilitykinetic}.
\end{remark}

\begin{remark}[Difficulty with boundary]
    \label{rmk:difficultywithboundary}
    Consider~\eqref{e.kramers.vproof} in a bounded $C^1$ domain $U$ with force $f^*$ and zero Dirichlet condition on $\p_\hyp U$. What goes wrong with the proof? One can demonstrate that there exists a solution $f^\nu \in H^1_{\rm hyp}(U)$ of the penalized equations which satisfies $f^\nu|_{\p_\hyp U} = 0$ \emph{away from the singular set}. However, we do not know how to justify that $f^\nu \in H^1_{\hyp,0}(U)$. That is, we cannot characterize $H^1_{\hyp,0}(U)$ as consisting of $H^1_{\hyp}(U)$ functions which vanish on $\p_\hyp U$ away from the singular set. Consequently, we cannot justify the integration by parts that would generate the energy estimates that would imply uniqueness of $f^\nu$ and allow us to send $\nu \to 0^+$.
\end{remark}

\subsection{The dual variational approach}

Define
\begin{equation}\label{e.def.B}
    Bf := v \cdot \nabla_x f + \b\cdot\nabla_v f \, .
\end{equation}

\newcommand{\ff}{\mathbf{f}}

\newcommand{\JJ}{\mathcal{J}}
\newcommand{\jj}{\mathbf{j}}

Consider the functional 
\begin{equation}\label{e.convex.mapping}
 \JJ[f,\jj] = \iint_{\T^d\times\R^d} \frac{1}{2}\left| \nabla_v f - \jj \right|^2 \,{d\sigma(x)}\,d\gamma(v) \,   
\end{equation}
evaluated at pairs $(f,\jj)\in H^1_\hyp(\T^d) \times \left(L^2(\T^d;L^2_\gamma)\right)^d$ satisfying
\begin{equation}\label{e.kv.constraint}
\nabla_v^*\jj= g^* - Bf = g^* - (v\cdot\nabla_x f + \b\cdot\nabla_v f) \, , \qquad (f)_{\T^d} = 0  \, . 
\end{equation}
In the remainder of this section, we \emph{always} consider $f\in H^1_\hyp(\T^d)$ satisfying the second condition. We seek a null minimizer of $\JJ$ restricted to such pairs, which, if it exists, will satisfy the implication
$$ \nabla_v f =\jj\implies \nabla_v^*\jj= \nabla_v^* \nabla_v f = g^* - Bf \, ,  $$
which is precisely \eqref{e.kramers.vproof}. 

\begin{proposition}[Solvability of the Kramers equation]
    \label{pro:solvabilitykramers}
Under Assumption~\ref{a.conserv} and the assumption that
$$  \iint_{\T^d\times\R^d} g^* \,d\gamma(v)\,d\sigma(x) = 0 \, ,  $$
there exists a unique solution $f$ to \eqref{e.kramers.vproof} such that $(f)_{\T^d}=0$, and $f$ is given as the null minimizer of the functional $\JJ[f,\jj]$ over pairs $(f,\jj)$ satisfying the constraint \eqref{e.kv.constraint}.
\end{proposition}

Before proving Proposition~\ref{pro:solvabilitykramers}, we argue that one may assume that $\langle g^* \rangle_\gamma=0$ as a function of $x$. For this, we require

\begin{lemma}\label{l.identifyZ}
Let $h\in L^2(\T^d)$ be given with $(h)_{\T^d}:=\int_{\T^d}h(x) \,d\sigma(x) = 0$.  Then there exists $g\in H^1_\hyp(\T^d)$ with $(g)_{\T^d}=0$ such that
\begin{equation}\label{e.averaging.correct}
    \langle v\cdot\nabla_x g + \b (x) \cdot \nabla_v g \rangle_\gamma(x) = h(x) \, , \qquad \left\| g \right\|_{H^1_\hyp(\T^d)} \leq C \left\| h \right\|_{L^2(\T^d)} \,  .
\end{equation}
\end{lemma}

Suppose that we can solve~\eqref{e.kramers.vproof} under the simplification $\langle g^* \rangle_\gamma = 0$. By Lemma~\ref{l.identifyZ} with $h=\langle g^* \rangle_\gamma$, we can find $g \in H^1_\hyp(\T^d)$ such that $\langle v\cdot\nabla_x g + \b \cdot\nabla_v g \rangle_\gamma=h$. Then, since $\langle-\Delta_v g + v\cdot \nabla_v g\rangle_\gamma=0$, we can solve
$$   -\Delta_v f + v\cdot \nabla_v f + v\cdot\nabla_x f + \b \cdot\nabla_v f = g^* - \left( -\Delta_v g + v\cdot \nabla_v g + v\cdot\nabla_x g + \b \cdot\nabla_v g \right) \, , $$
so $f+g$ solves \eqref{e.kramers.vproof}. We now show that such a $g$ exists, and in the argument below we always work under the assumption that $\langle g^* \rangle_\gamma=0$. We shall occasionally use the notation $g^* \in L^2(\T^d;\dot{H}^{-1}_\gamma)$ to signify that $\langle g^* \rangle_\gamma = 0$.

\begin{proof}[Proof of Lemma~\ref{l.identifyZ}]
Let $\ff \in H^1(\T^d;\R^d)$ be a solution to the problem\footnote{For example, one could argue as in the proof of Lemma~\ref{lem:auxiliaryh1lemma} to produce $\ff$ via the Lax-Milgram theorem satisfying the bound $\left\| \ff \right\|_{H^1(\T^d)} \leq C \left\| h \right\|_{L^2(\T^d)}$.}
$$  \nabla_x \cdot \ff(x) + \b (x) \cdot \ff(x) = h(x) \, .  $$
Let $\xi(s):\mathbb{R}\rightarrow \mathbb{\R}$ be a compactly supported, smooth, odd function of a single variable such that $\int_{\R} \xi(s) s \, ds \neq 0$.  Define $\xi_i: \R^d\rightarrow\R$ by
$$  \xi_i(v) = \xi(v_i) \prod_{i'\neq i} \xi'(v_{i'}) \, ,  $$
so that $\xi_i$ is odd in $v_i$ and even in all other $v_{i'}$ for $i'\neq i$. Under an appropriate normalization, we find that
$$ \int_{\R^d} \partial_{v_j} \xi_i(v) \,d\gamma(v)  = \int_{\R^d} v_j \xi_i(v) \,d\gamma(v) = \delta_{ij} \, ,   $$
since $v_j \xi_i(v) d\gamma(v)$ is odd in $v_i$ unless $i=j$, in which case it is even in all components of $v$. Define
$$  g(x,v) = \ff_i(x) \xi_i(v) \, , $$
where we have used the summation convention over repeated indices. By the smoothness of the $\xi_i$s and the $H^1(\T^d)$ regularity of $\ff$, it is clear that $g\in H^1_\hyp(\T^d)$ with norm controlled by the sum of the respective $H^1$ norms of $\ff$ and $\xi$. Furthermore, $(g)_{\T^d}=0$ since for $1\leq i \leq d$, $\xi_i$ is odd in $v_i$. Now we may compute that
\begin{align}
    \langle B {g} \rangle_\gamma(x)  &= \int_{\R^d} \left( v_j \partial_{x_j} g(x,v) + \b _j(x) \partial_{v_j} g(x,v) \right) \, d\gamma(v) \notag\\
    &= \int_{\R^d} \left( v_j \partial_{x_j} \ff_i(x) \xi_i(v) + \b _j \ff_i(x) \partial_{v_j} \xi_i(v) \right) \, d\gamma(v) \notag\\
    &= \partial_i \ff_i(x) \, + \, \b _i(x) \ff_i(x) \notag\\
    &= h(x) \, . \notag 
\end{align}
The proof is complete. \end{proof}

\begin{proof}[Proof of Proposition~\ref{pro:solvabilitykramers}]
We split the argument into five steps.
\smallskip

\emph{Step 1.} In this step, we show that the functional $\JJ$ is not uniformly equal to $+\infty$ and is uniformly convex on pairs $(f,\jj)$ satisfying the constraint~\eqref{e.kv.constraint}. Let us denote the set of pairs satisfying the constraint by
\begin{equation}\notag
    \mathcal{A}(g^*):= \left\{ (f,\jj) \in H^1_\hyp(\T^d) \times (L^2(\T^d;L^2_\gamma))^d \, : \, \nabla_v^*\jj= g^* - Bf  \, , \,  (f)_{\T^d} = 0  \right\} \, .
\end{equation}
First, since $g^* \in L^2(\T^d;\dot H^{-1}_\gamma)$, there exists $\jj \in L^2(\T^d;L^2_\gamma)$ such that $g^* = A^* \jj$. The pair $(0,\jj)$ belongs to $\A(g^*)$, and $\JJ(0,\jj) < +\infty$.

%Let $$f(x,v)=f(v)=\prod_{i=1}^d \phi_{\rm even}(v_i)$$
%be given as a $d$-fold product of a smooth, compactly supported even function $\phi_{\rm even}:\R\rightarrow\R$ evaluated at $v_i$ for $1\leq i \leq d$. Then $f\in H^1_\hyp(\T^d)$, $v\cdot\nabla_x f = 0$, and $\int_{\R^d} \partial_{v_i} f(v) \,d\gamma(v) = 0$, since $\partial_{v_i} f$ is odd in $v_{i}$ and the measure $d\gamma$ is even in $v_{i}$.  Then we have by the simplifying assumption that $\langle g^* - Bf \rangle_\gamma=0$.  Hence there exists $j\in L^2(\T^d;L^2_\gamma)$ such that $\nabla_v^*\jj= g^* - Bf$. Thus $(f,\jj)$ satisfies the constraint \eqref{e.kv.constraint}, showing that $\JJ[f,\jj]$ is not uniformly infinite.
\smallskip

We now demonstrate uniform convexity. 
Since for every $(f',\jj') \in \A(g^*)$ and $(f,\jj) \in \A(0)$,
\begin{equation}\label{e.parallelogram}
\frac 1 2 \JJ[f' + f,\jj' +\jj] + \frac 1 2\JJ[f'-f,\jj' -\jj] - \JJ[f',\jj'] = \JJ[f,\jj] \, ,
\end{equation}
it suffices to show that there exists $C(d) < \infty$ such that for every $(f,\jj) \in \A(0)$,
\begin{equation}  
\label{e.convex}
 \JJ[f,\jj] \ge C^{-1} \left(\|f\|_{H^1_{\hyp}(\T^d)}^2 + \|\jj\|_{L^2(\T^d;L^2_\gamma)}^2 \right) \, .
\end{equation}
Expanding the square and using that $\nabla_v^*\jj= - Bf$, we find
\begin{equation*}  %\label{e.}
\JJ[f,\jj] = \iint_{\T^d \times \R^d} \left(\frac 1 2 |\nabla_v f|^2 + \frac 1 2 |\jj|^2 + f Bf\right) \, dm \, .
\end{equation*}
Moreover, by \eqref{e.ibp.B}, the term $\iint_{\T^d\times\R^d} f Bf \, dm$ vanishes. Finally, from $-Bf=\nabla_v^*\jj$, we have $\langle Bf \rangle_\gamma = 0$, and thus
\begin{align*}  %\label{e.}
\|v \cdot \nabla_x f\|_{L^2(\T^d;H^{-1}_\gamma)} 
& 
\le \|Bf\|_{L^2(\T^d;H^{-1}_\gamma)} + \|\b (x)\cdot \nabla_v f\|_{L^2(\T^d;H^{-1}_\gamma)} \\
& 
\le C \|\jj\|_{L^2(\T^d;L^2_\gamma)} + C \|\nabla_v f\|_{L^2(\T^d;L^2_\gamma)} \, .
\end{align*}
Combining the last displays and Theorem~\ref{t.hypoelliptic.poincare} yields \eqref{e.convex}, and thus also the uniform convexity of the functional in \eqref{e.convex.mapping}. 
\newline
\smallskip

\emph{Step 2.} In this step, we rephrase the problem in terms of a perturbed convex minimization problem. Denote by $(f_1,\jj_1)$ the unique minimizing pair of the functional $\JJ$ over $\A(g^*)$. %In particular, $f_1$ is the unique minimizer of the functional in \eqref{e.convex.mapping}, and
We obviously have
\begin{equation*}  %\label{e.}
\JJ[f_1,\jj_1] \ge 0 \, .
\end{equation*}

We now show that there is a one-to-one correspondence between solutions $f$ of the Kramers equation and null minimizers $(f,\jj)$ of $\JJ$ satisfying the constraint \eqref{e.kv.constraint}: for every $f \in H^1_{\hyp}(\T^d)$ with $(f)_{\T^d}=0$, we have
\begin{align*}  %\label{e.}
f \mbox{ solves \eqref{e.kramers.vproof}} \iff \JJ[f,\jj_1] = 0 \, .
\end{align*}
Indeed, the implication $\implies$ is clear, since if $f$ solves \eqref{e.kramers.vproof}, then
\begin{equation*}  %\label{e.}
(f,\nabla_v f) \in \A(g^*) \ \text{ and } \ \JJ[f,\nabla_v f] = 0 \, .
\end{equation*}
Conversely, if $\JJ[f_1,\jj_1] = 0$, then by convexity we have that $f = f_1$ (assuming the mean-zero constraint from \eqref{e.kv.constraint}), and
\begin{equation*}  %\label{e.}
\nabla_v f_1 =\jj_1 \qquad \mbox{a.e. in } \T^d\times \R^d \, .
\end{equation*}
Then since $\nabla_v^*\jj_1 = g^* - B f_1$, we recover that $f = f_1$ is indeed a solution of~\eqref{e.kramers.vproof}. In particular, the fact that there is at most one solution to~\eqref{e.kramers.vproof} is clear.

\smallskip

To complete the proof, it thus remains to show that given the unique minimizing pair $(f_1,\jj_1)$, we have that
\begin{equation}  
\label{e.null.min}
\JJ[f_1,\jj_1] \le 0 \, .
\end{equation}
% Suppose we have a unique minimizing pair $(f_1,\jj_1)$. It then remains to show that $\JJ[f_1,\jj_1] \leq 0$.
We phrase this as a perturbed convex minimization problem for the functional $G$, which is defined for every $f^* \in L^2(\T^d; {H}^{-1}_\gamma)$ with $(f^*)_{\T^d}=0$ by 
$$  G(f^*) := \inf_{ \substack{f \in H^1_\hyp(\T^d)\\ (f)_{\T^d}=0}} \left( \iint_{\T^d\times\R^d} f f^* \, dm \, + \inf_{\substack{\jj \in L^2(\T^d) \\ (f,\jj) \in \mathcal{A}(f^* + g^*)}}\JJ[f,\jj] \right) \, . $$
To complete the proof, we must show that $G(0) \leq 0$. We decompose the argument into the next three steps. 
\newline
\smallskip

\emph{Step 3.} In this step, we show that $G$ is convex and reduce the problem to showing that the convex dual of $G$ is nonnegative. For every pair~$(f,\jj)$ satisfying $(f,\jj) \in \mathcal{A}(f^* + g^*)$, we have
\begin{equation}
\label{e.constraint.fj}
\nabla_v^*\jj= f^* + g^* - Bf \, , \qquad (f)_{\T^d} = 0 \, , 
\end{equation}
and so utilizing \eqref{e.ibp.B} we find that
\begin{align}  %\label{e.}
\iint_{\T^d\times\R^d} ff^* \,dm + \JJ[f,\jj] & = \iint_{\T^d\times\R^d} ff^* \,dm + \iint_{\T^d\times \R^d} \frac 1 2 |\nabla_v f -\jj|^2 \, dm \notag\\
&= \iint_{\T^d\times\R^d} ff^* \,dm + \iint_{\T^d\times\R^d} \frac{1}{2} |\nabla_v f|^2 + \frac{1}{2}|\jj|^2 - f \nabla_v^* \jj\, dm\notag \\
&= \iint_{\T^d\times\R^d} ff^* \,dm + \iint_{\T^d\times\R^d} \frac{1}{2} |\nabla_v f|^2 + \frac{1}{2}|\jj|^2 - f (f^*+g^* - Bf) \, dm \notag \\
&= \iint_{\T^d\times\R^d} \frac{1}{2} |\nabla_v f|^2 + \frac{1}{2}|\jj|^2 - g^* f \, dm . \notag
% \\
% & =\iint_{\T^d \times \R^d} \left( \frac 1 2 |\nabla_v f |^2 + \frac 1 2 |j|^2 +  f Bf - f(f^* + g^*)  \right) \, dm \, .
\end{align}
Taking the infimum over all $(f,\jj)$ satisfying the affine constraint $(f,\jj) \in \mathcal{A}(f^* + g^*)$, we obtain the quantity $G(f^*)$. {We thus infer that $G$ is convex in the variable $f^*$.}
By Lemma~\ref{l.identifyZ}, given $f^* \in L^2(\T^d;H^{-1}_\gamma)$ with vanishing mean, we may find $f_0 \in H^1_\hyp(\T^d)$ such that $\langle B f_0 \rangle_\gamma = \langle f^* + g^* \rangle_\gamma = \langle f^* \rangle_\gamma$.  Then since $\langle f^* + g^* - Bf_0 \rangle_\gamma=0$, we may find $\jj\in (L^2(\T^d;L^2_\gamma))^d$ such that $\nabla_v^* \jj = f^* + g^* - Bf_0$, and we see that the function $G$ is also locally bounded above. These two properties imply that $G$ is lower semi-continuous, see \cite[Lemma~I.2.1 and Corollary~I.2.2]{ET}. We denote by $G^*$ the convex dual
% \footnote{Convex dual means take a supremum over the vector space of the dual pairing minus the function.  For us the convex dual is actually defined for all $h\in L^2H^1_\gamma$.}
of $G$, defined for every $h \in L^2(\T^d;H^1_\gamma)$ with $(h)_{\T^d}=0$ by
\begin{equation*}  %\label{e.}
G^*(h) := \sup_{\substack{f^* \in L^2(\T^d;{H}^{-1}_\gamma) \\ (f^*)_{\T^d}=0}} \left( -G(f^*) + \iint_{\T^d\times \R^d} hf^* \, dm \right) \, ,
\end{equation*}
and by $G^{**}$ the bidual of $G$. 
% \red{Note that from the assumption that $\langle f^* \rangle_\gamma=0$, we may assume that $\langle h \rangle_\gamma=0$ as well.}
Since $G$ is lower semi-continuous, we have that $G^{**} = G$ (see \cite[Proposition~I.4.1]{ET}), and in particular,
\begin{equation*}  %\label{e.}
G(0) = G^{**}(0) = \sup_{\substack{h \in L^2(\T^d;H^1_\gamma)\\ {(h)_{\T^d} = 0} }} \left( -G^*(h) \right) \, .
\end{equation*}
In order to prove that $G(0) \le 0$, it therefore suffices to show that
\begin{equation}  
\label{e.get.to.nullmin}
\forall h \in L^2(\T^d;H^1_\gamma) \textnormal{ with } (h)_{\T^d} = 0  \, , \qquad G^*(h) \ge 0 \, .
\end{equation}
\smallskip

\emph{Step 4.} In this step we show that 
\begin{equation}  
\label{e.finite.G*}
G^*(h) < +\infty \quad \implies \quad h \in H^1_{ \hyp}(\T^d) \, .
\end{equation}
We rewrite $G^*(h)$ in the form
\begin{equation}  
\label{e.second.G*}
G^*(h) = \sup \left\{ \iint_{\T^d\times \R^d} \left( - \frac 1 2 |\nabla_v f -\jj|^2 - f f^* + h f^* \right) \, dm \right\} ,
\end{equation}
where the supremum is over every $f \in H^1_{\hyp}(\T^d)$, $\jj \in L^2(\T^d;L^2_\gamma)^d$ and $f^* \in L^2(\T^d;{H}^{-1}_\gamma)$ satisfying the constraint \eqref{e.constraint.fj}. 
% Recall from Lemma~\ref{l.identifyZ} that we can construct $f_0 \in H^1_{\hyp}(\T^d)$ such that $\langle g^* \rangle_\gamma = \langle B f_0 \rangle_\gamma$. 
Given $f$ with $ (f)_{\T^d} = 0$,
% \footnote{Technically $\langle Bf \rangle_\gamma$ is a function of $x$, while $(f)_{\T^d}$ is a scalar, but for simplicity of notation we write $\langle Bf \rangle_\gamma = (f)_{\T^d} = 0$.}
we choose to restrict the supremum above to $f^* := Bf$ and $\jj=\jj_0$ the solution of $\nabla_v^*\jj_0 = g^* $. 
Recall that such a $\jj_0 \in L^2(\T^d;L^2_\gamma)^d$ exists since $\langle g^* \rangle_\gamma = 0$. With such choices of $f^*$ and $\jj$, the constraint \eqref{e.constraint.fj} is satisfied, and we obtain that
\begin{multline*}  %\label{e.}
G^*(h)
\ge \sup \left\{ \iint_{\T^d \times \R^d} \left( -\frac 1 2 |\nabla_v f -\jj_0|^2 - fBf + h Bf \right) \, dm \, : \, f \in H^1_{\hyp}(\T^d)\, , \,  (f)_{\T^d}= 0 \right\} \, .
\end{multline*}
Recalling that $\iint f Bf \, dm = 0$, and using that $C^\infty_0(\T^d \times \R^d)$ is dense in  $H^1_\hyp(\T^d)$, we deduce that
\begin{multline*}  %\label{e.}
G^*(h)
\ge \sup \left\{ \iint_{\T^d \times \R^d} \left( -\frac 1 2 |\nabla_v f -\jj_0|^2 + h Bf \right) \, dm \, : \, f \in C^\infty_c(\T^d\times \R^d) \, , \, (f)_{\T^d}=0 \right\} \, .
\end{multline*}
Then 
% since $B f_0 \in L^2(\T^d;H^{-1}_\gamma)$, 
the assumption of $G^*(h) < \infty$ implies that
\begin{equation*}  %\label{e.}
\sup \left\{ \iint_{\T^d\times \R^d} h Bf\, dm \ : \ f \in C^\infty_c(\T^d\times \R^d) \, , \,  (f)_{\T^d}=0 \, , \,  \|f\|_{L^2(\T^d;H^1_\gamma)} \le 1 \right\} < \infty \, .
\end{equation*}
% Recall that we assumed in Step 3 that $\langle h \rangle_\gamma=0$, and so the above supremum can be taken over $f$ with $\langle Bf \rangle_\gamma \neq 0$ and $(f)_{\T^d}\neq 0$.
This then shows that the distribution $Bh$ belongs to the dual of $L^2(\T^d;H^1_\gamma)$, which is $L^2(\T^d;H^{-1}_\gamma)$. Since 
\begin{equation*}  %\label{e.}
v \cdot \nabla_x h = Bh - \b \cdot \nabla_v h \, ,
\end{equation*}
the proof of \eqref{e.finite.G*} is complete.
\newline
\smallskip

\emph{Step 5.} 
In place of \eqref{e.get.to.nullmin}, we have left to show that
\begin{equation}
\label{e.get.to.nullmin2}
\forall h \in H^1_{\hyp}(\T^d) \textnormal{ with } (h)_{\T^d}=0 \, , \qquad G^*(h) \ge 0 \, .
\end{equation}
Since $Bf \in L^2(\T^d;H^{-1}_\gamma)$, we may replace $f^*$ by $f^* + Bf$ in the variational formula \eqref{e.second.G*} for $G^*$ to get that
\begin{equation}  
\label{e.G*2}
G^*(h) = \sup \left\{ \iint_{\T^d \times \R^d} \left( -\frac 1 2 |\nabla_v f - \jj|^2 +(h-f)( f^* + Bf) \right) \, dm \right\} \, ,
\end{equation}
where the supremum is now over every $f \in H^1_{\hyp}(\T^d)$, $\jj \in L^2(\T^d;L^2_\gamma)^d$ and $f^* \in L^2(\T^d;H^{-1}_\gamma)$ satisfying the constraint
\begin{equation}
\label{e.constraint.fj2}
\nabla_v^* \jj = f^* + g^* \, , \qquad (f)_{\T^d}=0  \, .
\end{equation}
Setting $f = h$ in \eqref{e.G*2}, 
we find that
\begin{equation*}  %\label{e.}
G^*(h) \ge \sup \left\{ \iint_{\T^d \times \R^d} -\frac 1 2 |\nabla_v h - \jj|^2   \, dm \right\} \, ,
\end{equation*}
with the supremum ranging over all $f^* \in L^2(\T^d;H^{-1}_\gamma)$ and $\jj \in L^2(\T^d;L^2_\gamma)^d$ satisfying the constraint \eqref{e.constraint.fj2}. We now simply select $\jj = \nabla_v h \in L^2(\T^d;L^2_\gamma)^d$ and
$$f^* = \nabla_v^* \jj - g^* \in L^2(\T^d;H^{-1}_\gamma)\,, $$
at which point we conclude that $G^*(h) \ge 0$. \end{proof}

\section{Interior regularity of solutions}
\label{s.regularity}

In this subsection, we use energy methods to obtain interior regularity estimates for solutions of the equation
\begin{equation} 
\label{e.pde.f(x)}
-\Delta_v f + v\cdot \nabla_v f + v\cdot \nabla_x f + \b \cdot \nabla_v f + cf = f^* \, . 
\end{equation}
In analogy to the classical theory for uniformly elliptic equations (such as the Laplace or Poisson equations), we obtain an appropriate version of the Caccioppoli inequality, apply it iteratively to obtain~$H^1_\hyp$ estimates on all spatial derivatives of the solution, and then apply the H\"ormander and Sobolev inequalities to obtain pointwise estimates. In particular, we obtain higher regularity estimates---strong enough to imply that our weak solutions are $C^\infty$---without resorting to sophisticated theory for pseudodifferential operators. 
%Rather, the bootstrapping of regularity works in an analogous way as in the theory of uniformly elliptic equations, by iterating energy estimates and applying functional inequalities. 

\smallskip

We begin with a version of the Caccioppoli inequality for the equation~\eqref{e.pde.f(x)}.

\begin{lemma}
[Caccioppoli inequality]
\label{l.caccioppoli}
Suppose that $r>0$, $\b \in L^\infty(B_{r};L^\infty(\R^d;\Rd))$, $c \in L^\infty(B_r;L^\infty(\R^d))$, and the pair $(f,f^*) \in L^2(B_r;H^1_\gamma) \times L^2(B_r;H^{-1}_\gamma)$ satisfies the equation
\begin{equation} 
\label{e.cacc.pde}
-\Delta_v f + v\cdot \nabla_v f + v\cdot \nabla_x f + \b \cdot \nabla_v f + cf= f^*  \quad \mbox{in} \ B_r \times \Rd.
\end{equation}
Then $f \in H^1_\hyp(B_{r})$, and there exists $C\left(d,r,\| \b \|_{L^\infty(B_r;L^\infty(\R^d))}, \left\| c \right|_{L^\infty(B_r;L^\infty(\R^d))} \right)<\infty$ such that 
\begin{equation} 
\label{e.caccioppoli}
\left\| \nabla_v f\right\|_{L^2(B_{{\sfrac{r}{2}}};L^2_\gamma)} 
+
\left\| v\cdot \nabla_x f \right\|_{L^2(B_{{\sfrac{r}{2}}};H^{-1}_\gamma)}
\\
\leq 
C \left\| f \right\|_{L^2 (B_{r};L^2_\gamma)} 
+ C \left\| f^* \right\|_{L^2(B_r;H^{-1}_\gamma)}.
\end{equation}
\end{lemma}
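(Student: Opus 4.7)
The plan is to adapt the classical $H^1$ Caccioppoli estimate by testing the weak form of~\eqref{e.cacc.pde} against a quadratic spatial cutoff of $f$ around its mean. Set $c := (f)_{B_r}$, and fix $\eta \in C^\infty_c(B_r)$ depending only on $x$, with $0 \le \eta \le 1$, $\eta \equiv 1$ on $B_{r/2}$, and $|\nabla_x \eta| \le C/r$. The test function is $h := \eta^2(f - c) \in L^2(B_r; H^1_\gamma)$; since $\eta$ is $v$-independent we have $\nabla_v h = \eta^2 \nabla_v f$, so the left-hand side of the weak form of~\eqref{e.cacc.pde} equals $\int_{B_r \times \Rd} \eta^2 |\nabla_v f|^2 \, dx\, d\gamma$. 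The goal is to bound each of the three resulting right-hand side terms (coming from $v\cdot \nabla_x f$, $\b \cdot \nabla_v f$, and $f^*$) by a small multiple of this quantity, absorbable by the left-hand side, plus the data terms appearing in~\eqref{e.caccioppoli}.

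For the transport contribution, I will first establish the integration by parts identity
\begin{equation*}
\int_{B_r \times \Rd} \eta^2(f-c) \, v \cdot \nabla_x f \, dx\, d\gamma = -\int_{B_r \times \Rd} (f-c)^2 \, \eta\, (v \cdot \nabla_x \eta) \, dx\, d\gamma,
\end{equation*}
which for smooth $f$ follows from integration by parts in $x$ (with no boundary term, since $\eta$ is compactly supported in $B_r$) and which extends to $f \in H^1_\hyp(B_r)$ by approximation via Proposition~\ref{p.density} and continuity of both sides in the $L^2(B_r; H^1_\gamma)$-topology, the Gaussian moment bound~\eqref{e.extrav2integrab} ensuring integrability of the right-hand side. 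Next, by Cauchy--Schwarz and $|v \cdot \nabla_x \eta| \le (C/r) |v|$,
\begin{equation*}
\left|\int_{B_r \times \Rd} (f-c)^2 \eta\,(v \cdot \nabla_x \eta) \, dx\, d\gamma\right|
\le \frac{C}{r}\!\left(\int_{B_r \times \Rd} \eta^2 |v|^2 (f-c)^2 \, dx\, d\gamma\right)^{\!1/2}\!\!\|f-c\|_{L^2(B_r; L^2_\gamma)},
\end{equation*}
and applying~\eqref{e.extrav2integrab} to $\eta(f-c) \in L^2(B_r; H^1_\gamma)$ (so the cutoff appears inside the gradient) yields
\begin{equation*}
\int_{B_r \times \Rd} \eta^2 |v|^2 (f-c)^2 \, dx\, d\gamma \le C \!\left( \|f-c\|_{L^2(B_r; L^2_\gamma)}^2 + \int_{B_r \times \Rd} \eta^2 |\nabla_v f|^2 \, dx\, d\gamma \right).
\end{equation*}
Young's inequality then bounds the transport contribution by $\tfrac 1 8 \int \eta^2 |\nabla_v f|^2 \, dx\, d\gamma + C_r \|f-c\|_{L^2(B_r; L^2_\gamma)}^2$.

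The drift and forcing terms are more routine. Cauchy--Schwarz and Young give $|\int \eta^2 (f-c)\, \b \cdot \nabla_v f \, dx\, d\gamma| \le \tfrac 1 8 \int \eta^2 |\nabla_v f|^2 \, dx\, d\gamma + C(\|\b\|_\infty) \|f - c\|_{L^2(B_r; L^2_\gamma)}^2$; for the forcing, duality in $L^2(B_r; H^1_\gamma) \times L^2(B_r; H^{-1}_\gamma)$ together with the bound $\|\eta^2(f-c)\|_{L^2(B_r; H^1_\gamma)} \le C(\|f - c\|_{L^2(B_r; L^2_\gamma)} + (\int \eta^2 |\nabla_v f|^2)^{1/2})$ (using $\eta^4 \le \eta^2$) combined with Young's inequality delivers the bound $\tfrac 1 8 \int \eta^2 |\nabla_v f|^2 \, dx\, d\gamma + C(\|f-c\|_{L^2(B_r; L^2_\gamma)}^2 + \|f^*\|_{L^2(B_r; H^{-1}_\gamma)}^2)$. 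Absorbing these three small contributions into the left-hand side and using $\eta \equiv 1$ on $B_{r/2}$ yields the desired bound on $\|\nabla_v f\|_{L^2(B_{r/2}; L^2_\gamma)}$.

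For the $v \cdot \nabla_x f$ half of~\eqref{e.caccioppoli}, I will rearrange~\eqref{e.cacc.pde} as
\begin{equation*}
v \cdot \nabla_x f = \nabla_v^* \nabla_v f + \b \cdot \nabla_v f - f^* \quad \text{in } L^2(B_{r/2}; H^{-1}_\gamma),
\end{equation*}
and invoke the elementary duality estimates $\|\nabla_v^* g\|_{H^{-1}_\gamma} \le C \|g\|_{L^2_\gamma}$ and $\|\b \cdot g\|_{H^{-1}_\gamma} \le \|\b\|_\infty \|g\|_{L^2_\gamma}$, which together with the $L^2(B_{r/2}; L^2_\gamma)$ bound on $\nabla_v f$ just established yield the claimed bound on $\|v \cdot \nabla_x f\|_{L^2(B_{r/2}; H^{-1}_\gamma)}$. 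The main technical point lies in the transport contribution to the energy estimate: the Gaussian moment bound~\eqref{e.extrav2integrab} must be applied with the cutoff $\eta$ placed inside, on $\eta(f-c)$ rather than on $(f-c)$, so that the resulting $\int \eta^2 |\nabla_v f|^2$ term matches the left-hand side exactly and can be absorbed without recourse to any hole-filling or iteration-on-radii argument.
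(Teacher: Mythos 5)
Your proposal is correct and follows essentially the same route as the paper: test the weak formulation with a spatial cutoff squared times $f$ minus its mean, absorb the drift and forcing contributions by Young's inequality and duality, and recover the $\left\| v\cdot\nabla_x f\right\|_{L^2(B_{r/2};H^{-1}_\gamma)}$ bound by rearranging the equation. The one point of divergence is the transport term: after integrating by parts in $x$, the paper eliminates the $|v|$ weight by a further Gaussian integration by parts in $v$ (turning $-\int \phi\,\nabla_x\phi\cdot v\, f^2\,dx\,d\gamma$ into $\int 2f\phi\,\nabla_x\phi\cdot\nabla_v f\,dx\,d\gamma$), whereas you control it via Cauchy--Schwarz and the moment bound~\eqref{e.extrav2integrab} applied to $\eta(f-c)$; both yield an absorbable $\tfrac18\int\eta^2|\nabla_v f|^2$ plus $C\|f-c\|_{L^2(B_r;L^2_\gamma)}^2$, so this is a cosmetic difference.
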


%If $c = 0$, then we may replace $\| f \|_{L^2(B_r;L^2_\gamma)}$ by $\left\| f - \left(f\right)_{B_{r}} \right\|_{L^2 (B_{r};L^2_\gamma)}$ in Lemma~\ref{l.caccioppoli} and in the subsequent results based on iterating the Cacciopoli inequality.

\begin{proof}
The PDE~\eqref{e.cacc.pde} guarantees that $f \in L^2(B_r;H^1_\gamma)$ belongs qualitatively to $H^1_\hyp(B_r)$.
\smallskip

\emph{Step 1.} We show that there exists $C(d)<\infty$ such that
\begin{align} 
\label{e.cacc.nablayu}
\left\| \nabla_v f\right\|_{L^2(B_{{\sfrac{r}{2}}};L^2_\gamma)} 
&\leq 
C\left( \frac1r + \left\| \b \right\|_{L^\infty(B_{r}\times\Rd)} + \left\| c \right\|_{L^\infty(B_{r}\times\Rd)}^{\sfrac 12} \right)  \left\| f \right\|_{L^2 (B_{r};L^2_\gamma)}  \\
&\qquad \qquad + C (1+r ) \left\| f^* \right\|_{L^2(B_{r};H^{-1}_\gamma)} \, . \notag
\end{align}
Select a smooth cutoff function $\phi\in C^\infty_c(B_r)$ 
which is compactly supported in $B_{r}$ and satisfies 
$0 \leq \phi \leq 1$ in $B_r$, $\phi\equiv 1$ on~$B_{{\sfrac{r}{2}}}$ 
and $\left\| \nabla \phi \right\|_{L^\infty(B_r)} \leq 8r^{-1}$. 
Testing the equation~\eqref{e.cacc.pde} with $(x,v) \mapsto \phi^2(x) f(x,v)$ yields
\begin{align} 
\label{e.initialtest}
\int_{B_{r} \times\Rd} \phi^2 \left| \nabla_v f \right|^2 \,dx\,d\gamma
&= \int_{B_{r}\times\Rd}\phi^2 f \,  f^* \, dx\,d\gamma  - \int_{B_{r}\times\Rd} \phi^2 f v\cdot \nabla_x f\,dx\,d\gamma \\
&\qquad -
\int_{B_{r} \times \Rd} \phi^2 f \b \cdot \nabla_v f\,dx\,d\gamma - \int_{B_{r} \times \R^d} \phi^2 cf^2 \, dx \, d\gamma \, .
\end{align}
%Recall that the first two integrals on the right side above are interpreted as in \eqref{e.def.dualitypairing.2}.
We estimate each of the terms on the right-hand side of~\eqref{e.initialtest} separately. 
\smallskip

For the first term on the right side of~\eqref{e.initialtest}, we use 
\begin{align*} \label{}
\left| \int_{B_{r}\times\Rd}\phi^2 f \,  f^* \, dx\,d\gamma \right|
&  
\leq 
\left\| \phi^2 f \right\|_{L^2(B_{r};H^1_\gamma)} 
\left\| f^* \right\|_{L^2(B_{r};H^{-1}_\gamma)} 
\\ & 
\leq 
\left(
\left\| \phi^2 \nabla_v f \right\|_{L^2(B_{r};L^2_\gamma)} + \left\| f \right\|_{L^2(B_{r};L^2_\gamma)}
\right) \left\| f^* \right\|_{L^2(B_{r};H^{-1}_\gamma)} 
\end{align*}
and then apply Young's inequality to obtain
\begin{align} 
\label{e.cacc.firstterm}
& \left| \int_{B_{r}\times\Rd}\phi^2 f \,  f^* \, dx\,d\gamma \right| 
\\ & \qquad \notag
\leq 
\frac16 \int_{B_{r} \times\Rd} \phi^2 \left| \nabla_v f \right|^2 \,dx\,d\gamma 
+ \frac C{r^2} \int_{B_{r}\times\Rd} f^2 \,dx\,d\gamma 
+ C (1+r^2) \left\| f^* \right\|_{L^2(B_{r};H^{-1}_\gamma)}^2 \, .
\end{align}
For the second term on the right side of~\eqref{e.initialtest}, we integrate by parts to find
\begin{align*} \label{}
- \int_{B_{r}\times\Rd} \phi^2 f v\cdot \nabla_x f\,dx\,d\gamma
&
=
- \int_{B_{r}\times\Rd} \phi^2 v\cdot \nabla_x \left(\frac12 f^2 \right) \,dx\,d\gamma
\\ &
= \int_{B_{r}\times\Rd} \phi \nabla_x \phi \cdot v f^2  \,dx\,d\gamma
\\ & 
= \int_{B_{r}\times\Rd} \phi(x) \nabla_x \phi(x) \cdot  v \exp \Ll( -\frac {|v|^2}{2} \Rr)    f^2(x,v)  \,dx\,dv
\\ & 
= - \int_{B_{r}\times\Rd} 2f \phi \nabla_x \phi \cdot \nabla_vf  \,dx\,d\gamma \, .
\end{align*}
Thus, by Young's inequality,
\begin{align} 
\label{e.cacc.secondterm}
\left| \int_{B_{r}\times\Rd} \phi^2 f v\cdot \nabla_x f\,dx\,d\gamma \right| 
& 
\leq
\frac16 \int_{B_{r} \times\Rd} \phi^2 \left| \nabla_v f \right|^2 \,dx\,d\gamma
+ C \int_{B_{r} \times\Rd} f^2 \left| \nabla_x \phi \right|^2 \,dx\,d\gamma
\\ &  \notag
\leq 
\frac16 \int_{B_{r} \times\Rd} \phi^2 \left| \nabla_v f \right|^2 \,dx\,d\gamma
+ \frac{C}{r^2} \int_{B_{r} \times\Rd} f^2 \, dx \, d\ga \, .
\end{align}
For the third term on the right side of~\eqref{e.initialtest}, we use Young's inequality to obtain
\begin{align} 
\label{e.cacc.thirdterm}
\left| \int_{B_{r} \times \Rd} \phi^2 f \b\cdot \nabla_v f \right| 
&
\leq 
\frac16 \int_{B_{r} \times\Rd} \phi^2 \left| \nabla_v f \right|^2 \,dx\,d\gamma
+ C \int_{B_{r} \times\Rd} \phi^2 f^2 \left| \b \right|^2 \,dx\,d\gamma
\\ &  \notag
\leq \frac16 \int_{B_{r} \times\Rd} \phi^2 \left| \nabla_v f \right|^2 \,dx\,d\gamma
+ C \left\| \b \right\|_{L^\infty(B_{r}\times\Rd)}^2 \int_{B_{r} \times\Rd} f^2 \,dx\,d\gamma \, .
\end{align}
To conclude, we combine~\eqref{e.initialtest}-\eqref{e.cacc.thirdterm} and the obvious estimate on the final term to obtain
\begin{align*} \label{}
\int_{B_{r} \times\Rd} \phi^2 \left| \nabla_v f \right|^2 \,dx\,d\gamma
&
\leq
\frac23 \int_{B_{r} \times\Rd} \phi^2 \left| \nabla_v f \right|^2 \,dx\,d\gamma
+ \frac{C}{r^2} \int_{B_{r} \times\Rd} f^2\, dx \, d\ga
\\ & \qquad 
+ C (1+r^2) \left\| f^* \right\|_{L^2(B_{r};H^{-1}_\gamma)}^2\\
&\qquad + C \left( \left\| \b \right\|_{L^\infty(B_{r}\times\Rd)}^2 + \left\| c \right\|_{L^\infty(B_{r}\times\Rd)} \right) \int_{B_{r}  \times\Rd} f^2 \,dx\,d\gamma.
\end{align*}
The first term on the right may now be reabsorbed on the left. Using that $\phi=1$ on $B_{{\sfrac{r}{2}}}$, we thus obtain~\eqref{e.cacc.nablayu}. The analysis in Step~1 is enough to conclude that $f \in H^1_{\rm hyp}(B_{{\sfrac{r}{2}}})$ and the gradient bound in~\eqref{e.caccioppoli}.

\smallskip

\emph{Step 2.} We show that there exists $C(d)<\infty$ such that
\begin{align} 
\label{e.cacc.ynablaxu}
\left\| v\cdot \nabla_x f \right\|_{L^2(B_{{\sfrac{r}{2}}};H^{-1}_\gamma)}&
\leq 
C 
\left( 1 + \left\| \b \right\|_{L^\infty(B_{{\sfrac{r}{2}}}\times\Rd)} \right) 
\left\| \nabla_v f \right\|_{L^2(B_{{\sfrac{r}{2}}};L^2_\gamma)} \\
&\qquad + C \left\| c \right\|_{L^\infty(B_{{\sfrac{r}{2}}}\times\Rd)} \left\| f \right\|_{L^2(B_{{\sfrac{r}{2}}};L^2_\gamma)}
+ C\left\| f^* \right\|_{L^2(B_{{\sfrac{r}{2}}};H^{-1}_\gamma)}
. \notag
\end{align}
This estimate may be combined with~\eqref{e.cacc.nablayu} to obtain the bound for the second term in~\eqref{e.caccioppoli}, which completes the proof of the lemma. 

\smallskip

To obtain~\eqref{e.cacc.ynablaxu}, we test the equation~\eqref{e.cacc.pde} with $w\in L^2(B_{{\sfrac{r}{2}}};H^1_\gamma)$ to find that
\begin{equation*}
\int_{B_r\times \Rd}  w \,  (v\cdot \nabla_xf)  \, dx \, d\gamma
= - \int_{B_r\times \Rd} \nabla_v f \cdot \left(  \nabla_v w + w \b\right) + \int_{B_r\times \Rd} w f^* \, dx \, d\ga - \int_{B_r \times \R^d} cwf \, dx \, d\gamma \, .
\end{equation*}
We deduce that
\begin{align*} \label{}
& \left|\int_{B_r\times \Rd}  w \,  (v\cdot \nabla_xf)  \, dx \, d\gamma \right|
\\
&\qquad 
\leq
\left\| \nabla_v f \right\|_{L^2(B_{{\sfrac{r}{2}}};L^2_\gamma)}
\left( 
\left\| \nabla_v w \right\|_{L^2(B_{{\sfrac{r}{2}}};L^2_\gamma)} 
+ \left\| \b \right\|_{L^\infty(B_{{\sfrac{r}{2}}}\times\Rd)} 
\left\| w \right\|_{L^2(B_{{\sfrac{r}{2}}};L^2_\gamma)} \right)
\\ 
& \qquad \qquad
+ \left\| w \right\|_{L^2(B_{{\sfrac{r}{2}}};H^1_\gamma)} \left\| f^* \right\|_{L^2(B_{{\sfrac{r}{2}}};H^{-1}_\gamma)} + \| c \|_{L^\infty(B_r \times \R^d)} \| f \|_{L^2(B_r;L^2_\gamma)} \| w \|_{L^2(B_r;L^2_\gamma)}.
\end{align*}
Taking the supremum over $w\in L^2(B_{{\sfrac{r}{2}}};H^1_\gamma)$ with $\left\| w \right\|_{L^2(B_{{\sfrac{r}{2}}};H^1_\gamma)} \leq 1$ yields~\eqref{e.cacc.ynablaxu}.  

\smallskip

The combination of~\eqref{e.cacc.nablayu} and~\eqref{e.cacc.ynablaxu} yields~\eqref{e.caccioppoli}. \end{proof}

In the next lemma, under appropriate regularity conditions on the coefficients, we differentiate the equation~\eqref{e.pde.f(x)} with respect to~$x_i$ to obtain an equation for~$\partial_{x_i}f$, and then apply the previous lemma to obtain an interior~$H^1_{\hyp}$ estimate for~$\partial_{x_i}f$. We need to essentially differentiate the equation a fractional number of times (cf.~\cite{M1,M2}). 

\begin{lemma}[Differentiating in $x$]
\label{l.differentiating}
Fix $r\in (0,\infty)$ and coefficients $\b \in C^{0,1}(B_{r} \times\Rd;\Rd)$, $c\in C^{0,1}(B_r\times\Rd;\R)$. Suppose  that $f^* \in H^1(B_r;H^{-1}_\gamma)$ and $f \in H^1_\hyp(B_r)$ satisfy
\begin{equation} 
\label{e.readyfordiff}
-\Delta_v f + v\cdot \nabla_v f 
+ v\cdot \nabla_x f 
+ \b \cdot \nabla_v f 
+ c f
= f^* \quad \mbox{in} \ B_r \times \Rd
\end{equation}
% as well as
% \begin{equation}
% \label{e.cutoffcondition}
% f\equiv 0 
% \quad \mbox{in} \
% B_r \times \left(\Rd\setminus B_{v_0}\right).
% \end{equation}
Then, for each $i\in \{1,\ldots,d\}$, the function $h:= \partial_{x_i} f$ belongs to $H^1_{\hyp}(B_{r'})$ for all $r' \in (0,r)$ and satisfies
\begin{equation} 
\label{e.differentiated}
-\Delta_v h + v\cdot \nabla_v h + v\cdot \nabla_x h + \b  \cdot \nabla_v h + ch = \partial_{x_i} f^* - \partial_{x_i} \mathbf{b} \cdot \nabla_vf - \partial_{x_i} c  \, f\quad \mbox{in} \ B_{r'} \times \Rd.
\end{equation}
Moreover, there exists $C\left(d,r,\left\| \b \right\|_{C^{0,1}(B_r \times \R^d)},\left\| c \right\|_{C^{0,1}(B_r \times \R^d)} \right)<\infty$ such that 
\begin{align}
\label{e.diff.cacc}
\left\| \partial_{x_i} f \right\|_{H^1_\hyp(B_{{\sfrac{r}{2}}})}
&
\leq
C \left\| f  \right\|_{L^2 (B_{r};L^2_\gamma)} 
+ C \left\| f^* \right\|_{H^1(B_r;H^{-1}_\gamma)} \, .
\end{align}
\end{lemma}
\begin{proof}
The argument is by induction on the fractional exponent of differentiability of~$f$ in the spatial variable~$x$. Essentially, we want to differentiate the equation a fractional amount (almost $\sfrac 13$ times), apply the Caccioppoli inequality to the fractional derivative, and then iterate until we have one full spatial derivative.

\smallskip

\emph{Step 1}.  We first prove that, for every $(f,f^*)\in H^1_\hyp(B_r) \times H^1(B_r,H^{-1}_\gamma)$ satisfying~\eqref{e.readyfordiff}, there exists~$C\left(d,r,\left\| \b \right\|_{C^{0,1}(B_r \times \R^d)}, \left\| c \right\|_{C^{0,1}(B_r \times \R^d)} \right)<\infty$ such that $f$ belongs to~$H^1(B_{{\sfrac{r}{2}}};H^1_\ga)$ and satisfies the estimate
\begin{equation}
\label{e.diffonetime}
\left\| \nabla_x f  \right\|_{L^2(B_{{\sfrac{r}{2}}};H^1_\ga)}
\leq
C \left\| f  \right\|_{L^2 (B_{r};L^2_\gamma)} 
+ C \left\| f^* \right\|_{H^1(B_r;H^{-1}_\gamma)} \, .
\end{equation}

Suppose that~$\alpha_0 \in [0,1)$ is such that the following statement is valid: For every $\alpha \in [0,\alpha_0]$, $r>0$, and pair $(f,f^*)\in H^1_\hyp(B_r) \times H^{\alpha}(B_r,H^{-1}_\gamma)$ satisfying~\eqref{e.readyfordiff}, we have $f\in H^\alpha(B_{{\sfrac{r}{2}}};H^1_\gamma)$ and, for~$C\left(d,r,\left\| \b \right\|_{C^{0,1}(B_r \times \R^d)}, \left\| c \right\|_{C^{0,1}(B_r \times \R^d)},\alpha \right)<\infty$, the estimate
\begin{equation}
\label{e.diffalphatimes}
\left\| f \right\|_{H^\alpha(B_{{\sfrac{r}{2}}};H^1_\gamma)}
\leq 
C \left\| f \right\|_{L^2 (B_{r};L^2_\gamma)} 
+ C \left\| f^* \right\|_{H^{\alpha}(B_r;H^{-1}_\gamma)}.
\end{equation}
We argue that the statement is also valid for $\min\left(\alpha_0+\sfrac{1}{3}-\delta,1\right)$ in place of~$\alpha_0$ for all $\delta \in (0,\sfrac{1}{3})$. Note that this statement is clearly valid for $\alpha_0 = 0$ by the Caccioppoli inequality (Lemma~\ref{l.caccioppoli}).
% and the H\"ormander inequality (Corollary~\ref{c.sobolev.16}), it suffices by induction to obtain the statement~\eqref{e.diffalphatimes} for any $\alpha< \frac{4}{3}$ and therefore for $\alpha=1$, yielding that $\p_{x_i} f \in L^2(B_{{\sfrac{r}{2}}};L^2_\gamma)$ for all $i \in \{1 , \hdots, d \}$.

\smallskip

Fix~$\alpha \in [0,\alpha_0]$ and a pair
\begin{equation*}
(f,f^*) \in H^1_\hyp(B_r) \times H^{\alpha}(B_r,H^{-1}_\gamma)
\end{equation*}
satisfying~\eqref{e.readyfordiff}, an index~$i\in \{1,\ldots,d\}$, and a cutoff function~$\phi\in C^\infty_c(B_{{\sfrac{r}{2}}})$ with $0\leq \phi\leq 1$ and $\phi\equiv 1$ on $B_{{\sfrac{r}{4}}}$. Define the functions
\begin{align*}
\tilde{f} &: = \phi^2 f \, , \notag\\
\tilde{f}^* &:= \phi^2 f^* + 2f\phi \, v\cdot \nabla_x \phi \, .
\end{align*}
Observe that $\tilde{f}\in H^1_\hyp(\Rd)$ and $\tilde{f}^* \in H^\alpha(\Rd ; H^{-1}_\gamma)$ are compactly supported in $B_r$ and satisfy
\begin{align*}
    \| \tilde{f} \|_{H^\alpha(\R^d;L^2_\gamma)} &\leq C \| f \|_{H^\alpha(B_r;L^2_\gamma)} \, , \notag \\ 
    \| \tilde{f}^* \|_{H^\alpha(\R^d;H^{-1}_\gamma)} &\leq C \left(\| f^* \|_{H^\alpha(B_r;H^{-1}_\gamma)} + \| f \|_{H^\alpha(B_r;L^2_\gamma)}\right) \, ,
\end{align*}
and the PDE~\eqref{e.pde.f(x)} in $\R^d \times \R^d$.
\smallskip

Next, we mollify. This step ensures that the function qualitatively belongs to good enough spaces to justify the computations (the analogous step in Nirenberg's method is finite differences). Define
\begin{equation}\notag
    \bar{f} = \tilde{f} \ast_x \psi^\varepsilon,
\end{equation}
\begin{equation}\notag
    \bar{f}^* = \tilde{f}^* \ast_x \psi^\varepsilon - [\psi^\varepsilon \ast_x , \b \cdot] \nabla_v \tilde{f} - [\psi^\varepsilon \ast_x, c]  \tilde{f} \, ,
\end{equation}
where $\psi^\varepsilon$ is an appropriate mollification at scale $\varepsilon$. Then $(\bar{f},\bar{f}^*)$ satisfies the PDE~\eqref{e.pde.f(x)} in $\R^d \times \R^d$. We have that
\begin{equation}
    \label{eq:barfest1}
    \left\| (1-\Delta_x)^{\sfrac{\alpha}{2}} \bar{f} \right\|_{L^2(\R^d;L^2_\gamma)} \leq C \left\| \tilde{f} \right\|_{H^{\alpha}(\R^d;L^2_\gamma)}
\end{equation}
\begin{equation}
    \label{eq:barfest2}
    \left\| (1-\Delta_x)^{\sfrac{\alpha}{2}} \bar{f}^* \right\|_{L^2(\R^d;H^{-1}_\gamma)} \leq C \left\| \tilde{f}^* \right\|_{H^{\alpha}(\R^d;H^{-1}_\gamma)} + C \left\| \tilde{f} \right\|_{H^{\alpha}(\R^d;L^2_\gamma)},
\end{equation}
since $[\psi^\varepsilon \ast_x, \b \cdot]$ and $[\psi^\varepsilon \ast_x , c]$ are $H^\alpha(\R^d;H^{-1}_\gamma)$-bounded for all $\alpha \in [0,1]$ while $\b$ and $c$ are Lipschitz.
We apply $(1-\Delta_x)^{\sfrac{\alpha}{2}}$ to the PDE~\eqref{e.pde.f(x)} satisfied by $(\bar{f},\bar{f}^*)$ and denote $f_\alpha = (1-\Delta_x)^{\sfrac{\alpha}{2}} \bar{f}$. We have that $f_\alpha$ satisfies the equation
\begin{align*}
-\Delta_v f_\alpha + &v\cdot \nabla_v f_\alpha + v\cdot \nabla_x f_\alpha + \b \cdot \nabla_v f_\alpha + c f_\alpha\\
&= (1-\Delta)^{\sfrac{\alpha}{2}} \bar{f}^* - [(1-\Delta)^{\sfrac{\alpha}{2}}, \b \cdot] \nabla_v \bar{f} - [(1-\Delta)^{\sfrac{\alpha}{2}}, c]  \bar{f} \,
\end{align*}
in $\Rd \times \Rd$. The Cacciopoli inequality for $f_\alpha \in L^2(\R^d;H^1_\gamma)$, the H{\"o}rmander inequality, and~\eqref{eq:barfest1}-\eqref{eq:barfest2} give
\begin{equation}
    \label{eq:additionalsidebenefit}
    \| f_\alpha \|_{H^{\sfrac{1}{3}-\delta}(\R^d;L^2_\gamma)} + \| f_\alpha \|_{L^2(\R^d;H^1_\gamma)} \leq C \| \tilde{f} \|_{H^{\alpha}(\R^d;L^2_\gamma)} + C \| \tilde{f}^* \|_{H^{\alpha}(\R^d;H^{-1}_\gamma)}
\end{equation}
for all $\delta \in (0,\sfrac{1}{3})$, where $C$ depends on $\delta$. Sending the mollification parameter $\varepsilon \to 0^+$ completes the induction and the proof.
We emphasize that this induction demonstrates that $\p_{x_i} f \in L^2(B_{r'};H^1_\gamma)$ for all $r' < r$, where $f$ is a function satisfying the hypotheses of Lemma~\ref{l.differentiating}. Once this is known, one may plainly differentiate the equation in $\p_{x_i}$ and apply Caccioppoli's inequality to conclude. \end{proof}

\begin{lemma}[Differentiating in $v$]
\label{l.differentiatinginv}
Fix $r\in (0,\infty)$ and coefficients
$\b \in C^{0,1}(B_{r} \times\Rd;\Rd)$, $c\in C^{0,1}(B_r\times\Rd;\R)$. Suppose  that $f^* \in H^1(B_r;L^2_\gamma)$ and $f \in H^1_\hyp(B_r)$ satisfy
\begin{equation} 
\label{e.readyfordiffinv}
-\Delta_v f + v\cdot \nabla_v f 
+ v\cdot \nabla_x f 
+ \b \cdot \nabla_v f 
+ c f
= f^* \quad \mbox{in} \ B_r \times \Rd \, .
\end{equation}
% as well as
% \begin{equation}
% \label{e.cutoffcondition}
% f\equiv 0 
% \quad \mbox{in} \
% B_r \times \left(\Rd\setminus B_{v_0}\right).
% \end{equation}
Then, for each $i\in \{1,\ldots,d\}$, the function $h:= \partial_{v_i} f$ belongs to $H^1_{\hyp}(B_{r'})$ for all $r' \in (0,r)$ and satisfies
\begin{equation}
\label{eq:differentiatedinv}
-\Delta h + v\cdot \nabla_vh + v\cdot \nabla_x h +\b\cdot \nabla_vh + (c+1) h
=
h^* \quad \mbox{in} \ B_{r'} \times \Rd, 
\end{equation}
where 
\begin{equation}
\label{eq:hstardef}
h^*:= \partial_{v_i} f^* - \partial_{x_i} f - (\partial_{v_i} \b) \cdot \nabla_vf - (\partial_{v_i}c)f.
\end{equation}
Moreover, there exists $C\left(d,r,\left\| \b \right\|_{C^{0,1}(B_r \times \R^d)},\left\| c \right\|_{C^{0,1}(B_r \times \R^d)} \right)<\infty$ such that 
\begin{align}
\label{e.diff.cacc.v}
\left\| \partial_{v_i} f \right\|_{H^1_\hyp(B_{{\sfrac{r}{2}}})}
&
\leq
C \left\| f  \right\|_{L^2 (B_{r};L^2_\gamma)} 
+ C \left\| f^* \right\|_{H^1(B_r;L^2_\gamma)} \, .
\end{align}
\end{lemma}

\begin{proof}
The standard procedure is to differentiate the equation and apply Caccioppoli's inequality. This introduces a forcing term $h^*$, defined in~\eqref{eq:hstardef}, which contains $\p_{x_i} f$, and this is why we improve the spatial regularity beforehand in Lemma~\ref{l.differentiating}. That is, we already know
\begin{equation}\notag
    \| f \|_{H^1_\hyp(B_{r'})} + \| \p_{x_i}f  \|_{L^2(B_{r'};H^1_\gamma)} \leq C \left( \| f \|_{L^2(B_r;L^2_\gamma)} + \| f^* \|_{H^1(B_r;H^{-1}_\gamma)} \right),
\end{equation}
as in Lemma~\ref{l.differentiating}, where $r' = \sfrac{7r}{8}$.
In addition to this observation, we require
% an approximation procedure to justify the computation and use Caccioppoli's inequality.
% \smallskip
% \emph{Step 1.} We begin with
a cut-off and mollification procedure to compensate for the fact that we did not assume \emph{qualitatively} that $\p_{v_i} f \in L^2(B_r;H^1_\gamma)$, which would be enough to make the energy estimate rigorous.
\smallskip

For $\ell \geq 1$, consider a standard cut-off function $\varphi^\ell$ in $v$ at scale $\ell$. Define
\begin{equation}\notag
    \tilde{f} = \varphi^\ell f
\end{equation}
\begin{equation}\notag
    \tilde{f}^* = \varphi^\ell f^* - 2 \nabla_v f \cdot \nabla_v \varphi^\ell  - f \Delta_v \varphi^\ell  + f (v \cdot \nabla_v \varphi^\ell + \b \cdot \nabla_v \varphi^\ell) \, ,
\end{equation}
where we suppress the dependence on $\ell$ in the notation. Then $(\tilde{f},\tilde{f}^*)$ solves~\eqref{e.pde.f(x)} in $B_{r'} \times \R^d$, and it is not difficult to verify that
\begin{equation}\notag
    \| \tilde{f} \|_{L^2(B_{r'} ;H^1_\gamma)} \leq C \| f \|_{L^2(B_{r'} ;H^1_\gamma)} \, ,
\end{equation}
\begin{equation}\notag
    \| \p_{x_i} \tilde{f} \|_{L^2(B_{r'};L^2_\gamma)} \leq \|\p_{x_i} f \|_{L^2(B_{r'};L^2_\gamma)} \, ,
\end{equation}
and
\begin{equation}\notag
    \| \tilde{f}^* \|_{L^2(B_{r'};L^2_\gamma)} \leq C \left( \| f \|_{L^2(B_{r'} ;H^1_\gamma)} + \| f^* \|_{L^2(B_{r'} ;L^2_\gamma)} \right) \, .
\end{equation}
Next, we mollify. Let $\psi^\varepsilon$ be a standard mollification function in $v$ at scale $0 < \varepsilon \ll 1$.  Define
\begin{equation}\notag
    \bar{f} = \psi^{\varepsilon} \ast_v \tilde{f}
\end{equation}
\begin{equation}
    \label{eq:barfdef}
    \bar{f}^* = \psi^{\varepsilon} \ast_v \tilde{f}^* - [\psi^{\varepsilon}_v \ast_v, v \cdot ] (\nabla_v \tilde{f} + \nabla_x \tilde{f}) - ([\psi^{\varepsilon} \ast_v, \b \cdot] \nabla_v \tilde{f}) - [\psi^{\varepsilon} \ast_v, c] \tilde{f} \, ,
\end{equation}
where again we suppress the dependence on $\ell,\varepsilon$ in the notation.
Then $(\bar{f},\bar{f}^*)$ is well defined in $B_{r'} \times \R^d$ and solves~\eqref{e.pde.f(x)} there.

\smallskip

We highlight a few features of the cut-off and mollification procedure. Translations of $L^2_\gamma$ functions may not belong to $L^2_\gamma$, due to the superexponential nature of the weight (compare with exponential weights $e^{-c\la v \ra}$). Hence, mollification is not well behaved on $L^2_\gamma$. The velocity cut-off $\varphi^\ell$ tames this issue. This cut-off has the additional benefit of taming commutators with $v$ which occur naturally in the force term $\bar{f}^*$.
%Furthermore, the specific order of the limits in the approximation procedure will be used below.

We claim
\begin{equation}
    \label{eq:limsup1}
    \limsup_{\varepsilon \to 0^+} \left\| \bar{f} \right\|_{L^2(B_{r'};H^1_\gamma)} \leq \left\| \tilde{f} \right\|_{L^2(B_{r'} ;H^1_\gamma)} \, ,
\end{equation}
\begin{equation}\notag
    \limsup_{\varepsilon \to 0^+} \left\| \p_{x_i} \bar{f} \right\|_{L^2(B_{r'};L^2_\gamma)} \leq \left\| \p_{x_i} \tilde{f} \right\|_{L^2(B_{r'} ;L^2_\gamma)} \, ,
\end{equation}
and, more subtly,
\begin{equation}
    \label{eq:limsup2}
    \limsup_{\varepsilon \to 0^+} \left\| \bar{f}^* \right\|_{L^2(B_{r'};L^2_\gamma)} \leq C \left( \left\| \tilde{f} \right\|_{L^2(B_{r'} ;H^1_\gamma)} +  \left\| \tilde{f}^* \right\|_{L^2(B_{r'} ;L^2_\gamma)} \right) \, ,
\end{equation}
where~\eqref{eq:limsup1} and~\eqref{eq:limsup2} are for fixed $\ell$. Both estimates in~\eqref{eq:limsup1} are evident due to the support properties of $\tilde{f}$, so we focus on~\eqref{eq:limsup2}. For each fixed $\ell$, we have
\begin{equation}\notag
    \left\| ([\psi^{\varepsilon} \ast_v, \b \cdot] \nabla_v \tilde{f}) + [\psi^{\varepsilon} \ast_v, c] \tilde{f} \right\|_{L^2(B_{r'};L^2_\gamma)} \to 0 \, . %\leq C \left\| \tilde{f} \right\|_{L^2(B_{r'};H^1_\gamma)},
\end{equation}
as $\varepsilon \to 0^+$.\footnote{One may verify this by writing out the commutator explicitly and using the fundamental theorem of calculus for the difference terms that arise, such as $c(x,v-v')-c(x,v)$ if the mollification variable is $v'$.} Here, we use that the coefficients are Lipschitz and $\tilde{f}$ is compactly supported. It remains to analyze the second term in~\eqref{eq:barfdef}. From the compact support, we may replace $v$ by $\varphi^{2\ell} v$. Then
\begin{equation}
    \label{eq:necessitates1}
    \left\| [\psi^{\varepsilon}_v \ast_v, (\varphi^{2\ell} v) \cdot ]  (\nabla_v  \tilde{f}) \right\|_{L^2(B_{r'} ;L^2_\gamma)} \to 0
\end{equation}
\begin{equation}
    \label{eq:necessitates2}
    \left\| [\psi^{\varepsilon}_v \ast_v, (\varphi^{2\ell} v) \cdot ]  (\nabla_x  \tilde{f}) \right\|_{L^2(B_{r'} ;L^2_\gamma)} \to 0
\end{equation}
as $\varepsilon \to 0^+$ for fixed $\ell$.

\smallskip

Finally, we define $\bar{h} = \p_{v_i} \bar{f}$ and
\begin{equation}
\bar{h}^*:= \partial_{v_i} \bar{f}^* - \partial_{x_i} \bar{f} - (\partial_{v_i} \b) \cdot \nabla_v \bar{f} - (\partial_{v_i}c) \bar{f},
\end{equation}
which solve~\eqref{eq:differentiatedinv} in $B_{r'} \times \R^d$ and satisfy
\begin{equation}\notag
    \| \bar{h} \|_{L^2(B_{r'};L^2_\gamma)} \leq \| \bar{f} \|_{L^2(B_{r'};H^1_\gamma)}
\end{equation}
and
\begin{equation}\notag
    \| \bar{h}^* \|_{L^2(B_{r'};H^{-1}_\gamma)} \leq C \left( \| \bar{f}^* \|_{L^2(B_{r'};L^2_\gamma)} + \| \p_{x_i} \bar{f} \|_{L^2(B_{r'};L^2_\gamma)} + \| \bar{f} \|_{L^2(B_{r'};H^1_\gamma)} \right) \, .
\end{equation}
These, in turn, are estimated by the aforementioned inequalities for $\bar{f}$, $\tilde{f}$, and $f$.
Applying Caccioppoli's inequality and sending $\varepsilon \to 0^+$ and $\ell \to +\infty$ completes the proof. \end{proof}

Theorem~\ref{t.interior.regularity.both} concerning the interior regularity, jointly in the variables~$x$ and~$v$, is obtained by differentiating the equation and repeatedly applying Lemma~\ref{l.differentiating} and Lemma~\ref{l.differentiatinginv}, and we omit the details.

\section{The kinetic Fokker-Planck equation}
\label{s.kin}

In this last section, we study the time-dependent kinetic Fokker-Planck equation
\begin{equation}
\label{e.theotherpde}
\partial_t f - \varepsilon\left( \Delta_v f - v \cdot \nabla_v f \right) + v \cdot \nabla_x f + \b\cdot \nabla_v f = f^* \, .
\end{equation}
The parameter $\varepsilon$ is only relevant for the enhancement estimate, and one may imagine that $\varepsilon=1$ until the final subsection. As with the Kramers equation, we prove a Poincar\'e inequality for bounded domains $V\subset \R\times \R^d$ which are either $C^1$ or cylindrical products $I \times U$ where $I \subset \R$ is a bounded interval and $U$ is a bounded $C^1$ domain, but we consider the initial value problem only for $U=\T^d$. 

\subsection{Function spaces}
We define the function space
\begin{equation}
\label{e.H1kin.def}
H^1_{\kin}(V) := \Ll\{ f \in L^2(V;H^1_\ga) \ : \ \partial_t f + v \cdot \nabla_x f \in L^2(V;H^{-1}_\ga) \Rr\} ,
\end{equation}
equipped with the norm
\begin{equation}
\label{e.H1kin.norm.def}
\|f\|_{H^1_{\kin}(V)} := \|f\|_{L^2(V;H^1_\ga)} + \|\partial_t f + v\cdot \nabla_x f\|_{L^2(V;H^{-1}_\ga)}.
\end{equation}
We denote the unit exterior normal to $V$ by $\n_V \in L^\infty(\partial V;\R^{d+1})$. If $V$ is a $C^1$ domain, then $\n_V(t,x)$ is well defined for every $(t,x) \in \partial V$; if $V$ is of the form $I\times U$, then $\n_V(t,x)$ is well defined unless $(t,x) \in \partial I \times \partial U$, in which case we take the convention that $\n_V(t,x) = 0$. We define the hypoelliptic boundary of $V \subset \R \times \Rd$ as
\begin{equation*}  %\label{e.}
\partial_\kin(V) := \Ll\{ ((t,x),v) \in \partial V \times \Rd \ : \ 
\omv \cdot \n_V(t,x) < 0
 \Rr\} ,
\end{equation*}
We denote by $H^1_{\kin,0}(V)$ the closure in $H^1_\kin(V)$ of the set of smooth functions which vanish on $\partial_\kin V$.

\begin{proposition}[Density of smooth functions]
\label{p.density.kin}
Let $V \subset \R \times \Rd$ be a bounded $C^1$ domain or cylindrical product $I \times U$, where $U$ is a  bounded $C^1$ domain. The set $C^\infty_c(\overline{V} \times \Rd)$ of smooth functions with compact support in $\overline{V} \times\Rd$ is dense in $H^1_{\kin}(V)$.
\end{proposition}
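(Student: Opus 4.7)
The plan is to follow the three-step strategy of Proposition~\ref{p.density} with the operator $v \cdot \nabla_x$ replaced by $\partial_t - v \cdot \nabla_x$, which scales in the same way under joint dilation of the $(t,x)$-variables.

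First, since $V \subset \R \times \Rd$ is Lipschitz, the argument of Step~1 of the proof of Proposition~\ref{p.density} applies verbatim in the $(t,x)$-variable: by a partition of unity and an affine change of variables, one reduces to the case where for every $(t,x) \in V$ and every $\varepsilon \in (0,1]$ one has
\begin{equation*}
B\Ll((1-\varepsilon)(t,x),\varepsilon\Rr) \subset V.
\end{equation*}
Only the Lipschitz regularity of $\partial V$ was used in Step~1 of that earlier proof, so the $C^{1,1}$ assumption is not needed here.

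Next, for $f \in H^1_\kin(V)$, I multiply by a smooth cutoff in $v$ to reduce to $f$ with compact support in $\bar V \times \Rd$, exactly as in the original Step~2. Then, with $\zeta_\varepsilon$ a standard mollifier on $\R^{d+1}$, I set
\begin{equation*}
f_\varepsilon(t,x,v) := \int_{\R\times\Rd} f\bigl((1-\varepsilon)(t,x)+(s,y),v\bigr)\,\zeta_\varepsilon(s,y)\,ds\,dy,
\end{equation*}
which is well-defined thanks to the star-shape property. The key observation is the chain rule identity
\begin{equation*}
(\partial_t - v\cdot \nabla_x) f_\varepsilon(t,x,v) = (1-\varepsilon)\int_{\R\times\Rd} \bigl((\partial_t - v\cdot \nabla_x)f\bigr)\bigl((1-\varepsilon)(t,x)+(s,y),v\bigr)\,\zeta_\varepsilon(s,y)\,ds\,dy,
\end{equation*}
so the joint dilation of $(t,x)$ is precisely adapted to the first-order part of the kinetic operator. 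Using Jensen's inequality on the $L^2(V;H^1_\gamma)$ component and the same duality computation as in the original proof (testing against $\varphi \in L^2(V;H^1_\gamma)$, substituting, and applying Jensen to the shifted test function) to control $\|\partial_t f_\varepsilon - v\cdot\nabla_x f_\varepsilon\|_{L^2(V;H^{-1}_\gamma)}$, I obtain that $\{f_\varepsilon\}_{\varepsilon \in (0,1/2]}$ is bounded in $H^1_\kin(V)$. Since $f_\varepsilon \to f$ in the sense of distributions, Mazur's lemma places $f$ in the closed convex hull of $\{f_\varepsilon\}$ in $H^1_\kin(V)$.

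Finally, for each fixed $\varepsilon > 0$, I mollify in the velocity variable alone by convolving with $\zeta_\eta(v)$ to produce $f_{\varepsilon,\eta} \in C^\infty_c(\bar V \times \Rd)$. Since $f_\varepsilon$ has compact support in $\bar V \times \Rd$ and is smooth in $(t,x)$, we have $\partial_t f_\varepsilon, v\cdot\nabla_x f_\varepsilon \in L^2(V;L^2_\gamma)$, so standard mollifier arguments show $f_{\varepsilon,\eta} \to f_\varepsilon$ in $H^1_\kin(V)$ as $\eta \to 0$. The main (though mild) technical point is verifying that the joint dilation $(1-\varepsilon)(t,x)$ cleanly commutes with the operator $\partial_t - v\cdot\nabla_x$ in the duality estimate; this is where the $(t,x)$-synchronous scaling is essential and is what makes the kinetic analog as straightforward as the stationary one.
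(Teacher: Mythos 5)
Your proposal is correct and is essentially identical to the paper's own proof: the paper likewise reduces to the star-shaped case using only the Lipschitz regularity of $\partial V$, mollifies jointly in the variable $z=(t,x)$ via $f_\ep(z,v)=\int f((1-\ep)z+z',v)\zeta_\ep(z')\,dz'$, invokes Mazur's lemma, and then mollifies in $v$. The chain-rule identity you highlight, namely that the synchronous dilation of $(t,x)$ commutes with $\partial_t - v\cdot\nabla_x$ up to the factor $(1-\ep)$, is exactly the point that makes the adaptation go through.
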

\begin{proof}
Mimicking the first step of the proof of Proposition~\ref{p.density}, which only uses that the domain is Lipschitz, we see that we can assume without loss of generality that for every $z \in V$ and $\ep \in (0,1]$, we have
\begin{equation*}  %\label{e.}
B((1-\ep)z,\ep) \subset V.
\end{equation*}
Here we use $z$ to denote a generic variable in $\R\times \Rd$; in standard notation, $z=(t,x)$. Let $\zeta_\ep$ be a $(1+d)$-dimensional version of the mollifier defined in \eqref{e.def.zetaep}, and let $f \in H^1_\kin(V)$. We define, for every $\ep \in \Ll( 0,\frac 1 2 \Rr]$, $z \in V$ and $v \in \Rd$,
\begin{equation*}  %\label{e.}
f_\ep (z,v) := \int_{\R^{1+d}}f((1-\ep) z + z',v) \zeta_\ep(z') \, dz'.
\end{equation*}
We then show as in Step 2 of the proof of Proposition~\ref{p.density} that $f$ belongs to the closed convex hull of the set $\Ll\{f_\ep \ : \ \ep \in \Ll( 0,\frac 1 2 \Rr] \Rr\}$, and then, as in Step 3 of this proof, that for each $\ep > 0$, we have that $f_\ep$ belongs to the closure of the set $C^\infty_c(\bar V\times \Rd)$.  \end{proof}

\subsection{Functional inequalities for \texorpdfstring{$H^1_\kin$}{H1kin}}
We next show a Poincar\'e inequality for $H^1_\kin(V)$. %in the case when $V \subset \R\times \Rd$ is a Lipschitz domain. We do not know how to prove the result for cylindrical domains, but as will be clear below, this limitation does not cause much trouble. 
For the sake of generality, we allow for more flexible boundary conditions than in Theorem~\ref{t.hypoelliptic.poincare}, in the spirit of Remark~\ref{r.weaker.boundary.assumption}. 
\begin{proposition}[Poincar\'e inequality]
\label{p.poincare.kin} Let $V \subset \R\times \Rd$ be a bounded $C^1$ domain or a cylindrical product $I \times U$ where $U$ is a bounded $C^1$ domain.

(1) There exists a constant $C(V,d) < \infty$ such that for every $f \in H^1_\kin(V)$, we have
\begin{equation*}  %\label{e.}
\Ll\| f - (f)_V \Rr\|_{L^2(V;L^2_\gamma)} \le C \Ll( \|\nabla_v f\|_{L^2(V;L^2_\gamma)} + \|v \cdot \nabla_x f + \partial_t f   \|_{L^2(V;H^{-1}_\ga)} \Rr) \, .
\end{equation*}

(2) Let $W$ be a relatively open subset of $\partial V \times \Rd$. There exists a constant $C(V,W,d) < \infty$ such that for every $f \in C^\infty_c(\bar V \times \Rd)$ that vanishes on $W$, we have
\begin{equation*}  %\label{e.}
\Ll\| f  \Rr\|_{L^2(V;L^2_\gamma)} \le C \Ll( \|\nabla_v f\|_{L^2(V;L^2_\gamma)} + \|v \cdot \nabla_x f + \partial_t f   \|_{L^2(V;H^{-1}_\ga)} \Rr) \, .
\end{equation*}
\end{proposition}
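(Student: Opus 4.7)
The plan is to imitate the proof of Theorem~\ref{t.hypoelliptic.poincare}, with the operator $\partial_t - v\cdot\nabla_x$ (which is the divergence in the $(t,x)$ variables of the constant vector field $\omv=(1,-v)$) playing the role of $v\cdot\nabla_x$. By Proposition~\ref{p.density.kin}, I may assume that $f$ is smooth. The Gaussian Poincar\'e inequality immediately gives
\begin{equation*}
\|f-\langle f\rangle_\gamma\|_{L^2(V;L^2_\gamma)}\le \|\nabla_v f\|_{L^2(V;L^2_\gamma)},
\end{equation*}
so the main task is to control $\langle f\rangle_\gamma-(f)_V$ in $L^2(V)$. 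I will do this by first estimating the spacetime gradient $\nabla_{t,x}\langle f\rangle_\gamma$ in $H^{-1}(V)$ and then applying Lemma~\ref{l.est.L2.H-1} to the $C^{1,1}$ domain $V\subseteq\R\times\Rd$, noting that $(\langle f\rangle_\gamma)_V=(f)_V$.

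For the $H^{-1}(V)$ estimate on $\nabla_{t,x}\langle f\rangle_\gamma$, I select, for each $k\in\{0,1,\ldots,d\}$, a function $\xi_k\in C^\infty_c(\Rd)$ satisfying $\int_{\Rd}\omv\,\xi_k(v)\,d\gamma(v)=e_k$, where $e_0,\ldots,e_d$ is the canonical basis of $\R^{d+1}$ (concretely: $\int\xi_0\,d\gamma=1$, $\int v\xi_0\,d\gamma=0$ for $k=0$, and $\int\xi_i\,d\gamma=0$, $\int v\xi_i\,d\gamma=-e_i$ for $k=i\ge1$). Then for any $\phi\in H^1_0(V)$ and any $k$,
\begin{equation*}
\partial_{(t,x)_k}\phi(t,x)=\int_{\Rd}(\partial_t\phi-v\cdot\nabla_x\phi)\,\xi_k(v)\,d\gamma(v),
\end{equation*}
so that
\begin{equation*}
\int_V\partial_{(t,x)_k}\phi\,\langle f\rangle_\gamma\,dt\,dx=\int_{V\times\Rd}(\partial_t-v\cdot\nabla_x)(\phi\xi_k)\,f\,d\gamma\,dt\,dx+\int_{V\times\Rd}(\partial_t-v\cdot\nabla_x)\phi\,(\langle f\rangle_\gamma-f)\xi_k\,d\gamma\,dt\,dx.
\end{equation*}
An integration by parts in $(t,x)$, using that $\phi$ vanishes on $\partial V$ and that $\xi_k$ is independent of $(t,x)$, replaces the first integral by $-\int \phi\xi_k(\partial_t f-v\cdot\nabla_x f)\,d\gamma\,dt\,dx$, which is bounded by $C\|\xi_k\|_{H^1_\gamma}\|\phi\|_{L^2(V)}\,\|\partial_t f-v\cdot\nabla_x f\|_{L^2(V;H^{-1}_\gamma)}$. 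The second integral is controlled, using that $(1+|v|)\xi_k$ is bounded (compact support in $v$) and Gaussian Poincar\'e, by $C\|\phi\|_{H^1(V)}\|\nabla_v f\|_{L^2(V;L^2_\gamma)}$. Taking the supremum over $\phi$ and summing over $k$ gives the desired estimate on $\|\nabla_{t,x}\langle f\rangle_\gamma\|_{H^{-1}(V)}$; combining with Lemma~\ref{l.est.L2.H-1} and Gaussian Poincar\'e proves part~(1).

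For part~(2), arguing as in Step~4 of the proof of Theorem~\ref{t.hypoelliptic.poincare}, it suffices to control $|(f)_V|$ by the right-hand side. I will construct a test function $f_1\in C^\infty_c(\overline V\times\Rd)$ such that $f_1\equiv 0$ on $(\partial V\times\Rd)\setminus W$, $\fint_V\int_{\Rd}(\partial_t f_1-v\cdot\nabla_x f_1)\,d\gamma\,dt\,dx=1$, and $\|\partial_t f_1-v\cdot\nabla_x f_1\|_{L^2(V;L^2_\gamma)}\le C(V,W,d)$. Since $W$ is relatively open in $\partial V\times\Rd$ and, for every $(t,x)\in\partial V$, the set $\{v\in\Rd:\omv\cdot\n_V(t,x)=0\}$ is a hyperplane (hence has zero Lebesgue measure in $\Rd$), one of the two open subsets $W\cap\{\omv\cdot\n_V>0\}$ and $W\cap\{\omv\cdot\n_V<0\}$ is non-empty; picking $f_1$ smooth, of one sign, and compactly supported in such a subset (with small enough support that $\omv\cdot\n_V$ keeps a constant sign there) makes $\int_{\partial V\times\Rd}\omv\cdot\n_V\,f_1\,d\gamma\,d\mathcal H_d$ non-zero, and we rescale $f_1$ so that this integral equals $|V|$. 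With this $f_1$ in hand, the identity
\begin{equation*}
(f)_V=\fint_V\int_{\Rd}f\,(\partial_t f_1-v\cdot\nabla_x f_1)\,d\gamma\,dt\,dx-\fint_V\int_{\Rd}(f-(f)_V)(\partial_t f_1-v\cdot\nabla_x f_1)\,d\gamma\,dt\,dx
\end{equation*}
holds, and integration by parts in $(t,x)$ on the first term produces a boundary contribution $\int_{\partial V\times\Rd}\omv\cdot\n_V\,ff_1\,d\gamma\,d\mathcal H_d$, which vanishes because $f$ vanishes on $W$ and $f_1$ vanishes on $(\partial V\times\Rd)\setminus W$. The remaining volume integral is bounded by $C\|f_1\|_{L^2(V;H^1_\gamma)}\|\partial_t f-v\cdot\nabla_x f\|_{L^2(V;H^{-1}_\gamma)}$, and the second term is estimated using part~(1) and $\|\partial_t f_1-v\cdot\nabla_x f_1\|_{L^2(V;L^2_\gamma)}\le C$.

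The only delicate point is the geometric observation, used in the construction of $f_1$, that for any relatively open subset $W$ of $\partial V\times\Rd$ the set $W\cap\{\omv\cdot\n_V\ne 0\}$ is non-empty; everything else consists of adapting the arguments of Theorem~\ref{t.hypoelliptic.poincare} to the spacetime vector field $\omv$.
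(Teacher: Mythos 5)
Your proposal is correct and follows essentially the same route as the paper: part~(1) is the Gaussian Poincar\'e inequality plus the estimate of $\nabla_{t,x}\langle f\rangle_\gamma$ in $H^{-1}(V)$ via compactly supported velocity weights $\xi_k$ with $\int\omv\,\xi_k\,d\gamma=e_k$ and Lemma~\ref{l.est.L2.H-1} applied to the $C^{1,1}$ domain $V$, and part~(2) is the same mean-value test-function construction as in Steps~4--5 of the proof of Theorem~\ref{t.hypoelliptic.poincare}, adapted to the boundary weight $\omv\cdot\n_V$ exactly as in Remark~\ref{r.weaker.boundary.assumption}. The geometric point you flag (that $W\cap\{\omv\cdot\n_V\neq 0\}$ is non-empty because for each fixed boundary point the zero set in $v$ is either empty or an affine hyperplane) is sound and is the only detail the paper leaves implicit.
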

\begin{proof}[Proof of Proposition~\ref{p.poincare.kin}]
The proof is similar to that of Theorem~\ref{t.hypoelliptic.poincare}. By Proposition~\ref{p.density.kin}, we can assume that $f \in C^\infty_c(\bar W \times \Rd)$. We start by using the Gaussian Poincar\'e inequality to assert that
\begin{equation*}  %\label{e.}
\|f - \la f \ra_\ga \|_{L^2(V;L^2_\ga)} \le \|\nabla_v f\|_{L^2(V;L^2_\ga)} \, .
\end{equation*}
Paralleling the second step of the proof of Theorem~\ref{t.hypoelliptic.poincare}, we then aim to gain control on a negative Sobolev norm of the derivatives of $\la f \ra_\ga$. Here we treat the time and space variables on an equal footing, and thus are interested in controlling $\partial_t \la f \ra_\ga$ and $\nabla \la f \ra_\ga$ in the $H^{-1}(V)$ norm. The precise claim is that there exists $C(d,V) < \infty$ such that for every test function $\phi \in C^\infty_c(V)$ satisfying 
\begin{equation}  
\label{e.bounds.phi.kin}
\|\phi\|_{L^2(V)} + \|\nabla \phi\|_{L^2(V)} + \|\partial_t \phi\|_{L^2(V)} \le 1 \, ,
\end{equation}
we have
\begin{equation}  
\label{e.bracket.H-1.kin}
\Ll|\int_{V} \phi \, \partial_{t} \la f \ra_\ga \Rr| + \sum_{i = 1}^d \Ll|\int_{V} \phi \, \partial_{x_i} \la f \ra_\ga \Rr|  \le C \Ll( \|\nabla_v f\|_{L^2(V;L^2_\ga)} + \|v \cdot \nabla_x f + \partial_t f   \|_{L^2(V;H^{-1}_\ga)} \Rr) \, .
\end{equation}
We start by showing that the first term on the left side of \eqref{e.bracket.H-1.kin}, which refers to the time derivative of 
$\la f \ra_\ga$, is estimated by the right side of \eqref{e.bracket.H-1.kin}. We select a smooth function $\xi_0 \in C^\infty_c(\Rd)$ such that
\begin{equation}  
\label{e.prop.xi0}
\int_{\Rd} \xi_0(v) \, d\gamma(v) = 1 \quad \text{ and } \quad \int_\Rd v \xi_0(v) \, d\gamma(v) = 0 \, ,
\end{equation}
and observe that, using these properties of $\xi_0$, we can write
\begin{align*}  %\label{e.}
& \int_{V} \partial_t \phi(t,x) \, \la f \ra_\ga(t,x) \, dt \, dx
\\
& \qquad  = \int_{V \times \Rd} \xi_0(v) \Ll(\partial_t \phi(t,x) + v \cdot \nabla_x \phi(t,x)\Rr) \la f \ra_\ga(t,x) \, dt \, dx \, d\gamma(v) 
\\
& \qquad = \int_{V \times \Rd} \xi_0(v) \Ll(\partial_t + v \cdot \nabla_x \Rr) \phi(t,x) \,  f(t,x,v) \, dt \, dx \, d\gamma(v)
\\
& \qquad \qquad  +  \int_{V \times \Rd} \xi_0(v) \Ll(\partial_t + v \cdot \nabla_x \Rr) \phi(t,x)  \Ll(\la f \ra_\ga(t,x) - f(t,x,v)\Rr) \, dt \, dx \, d\gamma(v) \, .
\end{align*}
Using \eqref{e.bounds.phi.kin} and the fact that $\xi_0$ has compact support, we can bound the second integral above by
\begin{equation*}  %\label{e.}
C \|f - \la f \ra_\ga\|_{L^2(V;L^2_\ga)} \le C \|\nabla_v f\|_{L^2(V;L^2_\ga)} \, .
\end{equation*}
By integration by parts, the absolute value of the first integral is equal to
\begin{equation*}  %\label{e.}
\Ll|\int_{V \times \Rd} \xi_0(v)\phi(t,x) \Ll( v \cdot \nabla_x + \partial_t \Rr)f(t,x,v) \, dt\, dx \, d\gamma(v)\Rr| \le C \|v\cdot \nabla_x f + \partial_tf\|_{L^2(V;H^{-1}_\ga)} \, .
\end{equation*}
This completes the proof of the estimate in \eqref{e.bracket.H-1.kin} involving the time derivative. To estimate the terms involving the space derivatives, we fix $i \in \{1,\ldots,d\}$ and use a smooth function $\xi_i \in C^\infty_c(\Rd)$ satisfying 
\begin{equation*}  %\label{e.}
\int_\Rd \xi_i(v) \, d\ga(v) = 0 \quad \text{ and } \quad \int_\Rd v \xi_i(v) \, d\ga(v) = e_i
\end{equation*}
to get that
\begin{multline*}  %\label{e.}
 \int_{V} \partial_{x_i} \phi(t,x) \, \la f \ra_\ga(t,x) \, dt \, dx
\\
  = \int_{V \times \Rd} \xi_i(v) \Ll(v \cdot \nabla_x \phi(t,x) + \partial_t \phi(t,x)\Rr) \la f \ra_\ga(t,x) \, dt \, dx \, d\gamma(v) \, .
\end{multline*}
The rest of the argument is then identical to the estimate involving the time derivative, and thus \eqref{e.bracket.H-1.kin} is proved. The remainder of the proof is then identical to that for Theorem~\ref{t.hypoelliptic.poincare}. Note that we need to invoke {Lemma~\ref{l.est.L2.H-1}}, which allows Lipschitz regularity, for the domain~$V$. \end{proof}

\subsection{The H\"ormander inequality for $H^1_{\kin}$}
For the H\"ormander inequality, we recall the parameter $\varepsilon$ from \eqref{e.theotherpde} and assume that the spatial/temporal domain is $V=[0,\varepsilon^{-\sfrac{1}{3}}]\times\T^d$, although a similar estimate would hold for $V=[0,\varepsilon^{-\sfrac{1}{3}}]\times\R^d$. We emphasize that we have included this particular factor of $\varepsilon$ due to the fact that the {a priori} estimates for \eqref{e.theotherpde} control only $\varepsilon^{\sfrac{1}{2}}\nabla_v f$, and also due to the scaling between the regularity exponent we shall be able to obtain for $\nabla_x f$ and the {a priori} estimate.  This inequality for~$H^1_\kin(V)$ is proved in an almost identical way to the one for~$H^1_\hyp(\T^d)$; the only difference is that the time variable is \emph{not} periodic as is the space variable.  So a bit of care must be taken with the finite differences corresponding to the vector field $\partial_t+v\cdot\nabla_x$. We track the parameter $\varepsilon$ throughout the proof for the purposes of the enhancement estimate later on. The version of~\eqref{e.firstordersplitting} we use here is
\begin{align}
\label{e.firstordersplitting.witht}
    f(t,&x+\eta^3\varepsilon^{\sfrac{1}{2}}x',v) - f(t,x,v) \\
    &= f(t,x+\eta^3 \varepsilon^{\sfrac{1}{2}} x',v) - f(t,x+\eta^3\varepsilon^{\sfrac{1}{2}}x',v-\eta\varepsilon^{\sfrac{1}{2}}x') \notag\\
    &\quad+ f(t,x+\eta^3\varepsilon^{\sfrac{1}{2}}x',v-\eta\varepsilon^{\sfrac{1}{2}}x') - f(t+\eta^2,x+\eta^3\varepsilon^{\sfrac{1}{2}}x'+\eta^2(v-\varepsilon^{\sfrac{1}{2}}\eta x'),v-\eta\varepsilon^{\sfrac{1}{2}} x') \notag\\
    &\quad + f(t+\eta^2,x+\eta^2v,v-\eta\varepsilon^{\sfrac{1}{2}} x') - f(t+\eta^2,x+\eta^2v,v) \notag\\
    &\quad + f(t+\eta^2,x+\eta^2 v,v) - f(t,x,v) \, .\notag
\end{align}

As before, we must define the following Besov spaces based on finite differences in the $\nabla_x$ and $D_t=\partial_t + v\cdot\nabla_x$ directions. The Besov space measuring fractional regularity in the $x$ variable now depends \emph{fundamentally} on $\varepsilon$ and $t$, and so we denote this space $Q_{\nabla_x}^{\sfrac{1}{3},\varepsilon}$.  To lighten the notation, in the context of proofs in which $\varepsilon$ is always fixed, we sometimes shall substitute the notation $\Qx$ instead of the more cumbersome $Q_{\nabla_x}^{\sfrac{1}{3},\varepsilon}$, and similarly for $\Qvxteps$.

\begin{definition}\label{d.Qvxt}
For measurable $u:(0,\varepsilon^{-\sfrac{1}{3}})\times\mathbb{T}^d\times\mathbb{R}^d\rightarrow\mathbb{R}$, we define
\begin{align}
    \left\| u \right\|_{\Qvxteps}^2 &:= \sup_{ 0< \eta \leq \sqrt{\frac{\epsthird}{{2}}}} \frac{1}{\eta^2} \bigg{(}\iiint_{\left(0,\frac{\epsthird}{2}\right)\times\mathbb{R}^d\times\mathbb{T}^d} \left( u(t+\eta^2,x+\eta^2 v, v) - u(t,x,v) \right)^2 \,dx\,d\gamma(v)\,dt\, \notag\\
     &\qquad +  \iiint_{\left(\frac{\epsthird}{2},\epsthird\right)\times\mathbb{R}^d\times\mathbb{T}^d} \left( u(t-\eta^2,x-\eta^2 v, v) - u(t,x,v) \right)^2 \,dx\,d\gamma(v)\,dt \bigg{)} \,. \label{e.Qvxt.norm}
\end{align}
We define
\begin{equation}\label{e.Qxt.norm}
    \left\| u \right\|_{Q_{\nabla_x}^{\sfrac{1}{3},\varepsilon}}^2 := \sup_{\substack{ 0< \eta \leq \sqrt{\frac{\varepsilon^{-\sfrac{1}{3}}}{2}} \\ x'\in \mathbb{S}^{d-1}}} \frac{1}{\eta^{2}} \iiint_{(0,\epsthird)\times\mathbb{R}^d\times\mathbb{T}^d} \left( u(t,x+\epshalf\eta^3 x', v) - u(t,x,v) \right)^2 \,dx\,d\gamma(v)\,dt\, .
\end{equation}
\end{definition}
Notice that the quantity $\varepsilon^{\sfrac{1}{2}}\eta^3$ is of order one if $\eta^2$ takes its maximum value of $\frac{\varepsilon^{-\sfrac{1}{3}}}{2}$.  Then by iterating the finite differences, the norm in \eqref{e.Qxt.norm} is equivalent to one in which the supremum is taken over values of $\eta$ at least as large as the diameter of $\T^d$, at which point the norm is equivalent to one including all positive values of $\eta$.

To streamline the proof of the enhancement estimate later, we assume in the following proposition that $\langle \partial_t u + v\cdot\nabla_x u \rangle_\gamma\equiv 0$ (a condition which will be satisfied in the enhancement context).  Then from Lemma~\ref{l.rep.H-1}, the $L^2_{t,x}H^{-1}_\gamma$ norm of $\partial_t u + v\cdot\nabla_x u$ may be obtained via duality against the gradients (in $v$) of $L^2_{t,x}H^1_\gamma$ functions which have vanishing means $\langle \cdot \rangle_\gamma$.  Thus the inequality \eqref{eq:interpolation:t} does not require the $L^2_{t,x}L^2_\gamma$ norm of $u$ on the right-hand side; one could easily adjust the statement in the case that $\langle \partial_t u + v\cdot\nabla_x u \rangle_\gamma \neq 0$ by including the necessary term.

\begin{lemma}[Interpolation]\label{lem:interpolation:t}
For every $\de > 0$, there exists a constant $C(\delta, d) < \infty$ (not depending on $\ep$) such that for any smooth function $u$ satisfying $\langle \partial_t u + v\cdot\nabla_x u \rangle_\gamma \equiv 0$, 
\begin{align}\label{eq:interpolation:t}
    \left\| u \right\|_{\Qvxteps}^2
    &\leq  {\delta} \left\| u \right\|^2_{Q_{\nabla_x}^{\sfrac{1}{3},\varepsilon}} \\
    &\quad +  C(\delta) \left( {\varepsilon} \left\| \nabla_v u \right\|^2_{L^2\left((0,\epsthird)\times\mathbb{T}^d;L^{2}_\gamma\right)} + {\varepsilon^{-1}} \left\| \partial_t u + v\cdot\nabla_x u \right\|^2_{L^2\left( (0,\epsthird)\times \T^d ; H^{-1}_\gamma \right)} \right) \, . \notag
\end{align}
\end{lemma}
\begin{remark}
The factors of $\varepsilon$ ensure that the right-hand side remains of order $1$ as $\varepsilon\rightarrow 0$ and arise naturally when deriving the a priori estimates for solutions to \eqref{e.theotherpde}; see section~\ref{ss.enhancement} for more details.
\end{remark}
\begin{proof}
The proof is similar for both halves of \eqref{e.Qvxt.norm}, i.e.\ the forward and backward differences, and so we focus on the case of the forward difference.
\smallskip

\textit{Step 1}. Let $\phi \in C^\infty_0((-1,1)^d)$ be a smooth, positive, radial function with unit $L^1$ norm.  For $\zeta>0$, we define $\phi_{\zeta} u (t,x,v)$ by 
\begin{equation}\label{eq:mollifier:t}
\phi_{\zeta} u (t,x,v) = \int_{\mathbb{R}^d} u (t,x+\zeta^3 \epshalf x',v) \phi(x') \, dx' \, .  \notag
\end{equation}
% Using Jensen's inequality, we calculate that 
% \begin{align}
%   &\left\| \phi_\zeta u(t,x,v) - u(t,x,v) \right\|_{L^2((0,\espthird
%   )\times\mathbb{T}^d; L^2_\gamma)}^2 \notag\\
%   &\qquad= \iiint_{(0,\epsthird)\times\mathbb{R}^d\times \mathbb{T}^d} \left( \int_{\mathbb{T}^d} \phi(x')\left( u(x+\zeta^3x',v,t) - u(x,v,t) \right) \,dx' \right)^2 \,dx\,d\gamma(v)\,dt \notag\\
%   &\qquad\leq \iiiint_{(0,\epsthird)\times\mathbb{R}^d\times \mathbb{T}^d\times\mathbb{T}^d} \phi(x')\left( u(x+\zeta^3x',v,t) - u(x,v,t) \right)^2 \,dx'\,dx\,d\gamma(v) \, dt \notag\\
%   &\qquad = \iiint_{(0,\epsthird)\times\mathbb{R}^d\times \mathbb{T}^d\times\mathbb{T}^d} \phi(x') \zeta^2 \frac{1}{\zeta^2} \left( u(x+\zeta^3x',v,t) - u(x,v,t) \right)^2 \,dx'\,dx\,d\gamma(v)\,dt \notag\\
%   &\qquad \leq \int_{\mathbb{T}^d} \phi(x') \zeta^2 \left\| u \right\|_{\Qx}^2 \, dx'\, , \notag
% \end{align}
Analogously to Step 1 from the proof of Theorem~\ref{t.hormander}, we have that
\begin{equation}\label{eq:mollifying:estimate:t}
    \left\| \phi_{\zeta} u(t,x,v) - u(t,x,v) \right\|_{L^2 ((0,\epsthird)\times\mathbb{T}^d; L^2_\gamma)}^2 \leq \zeta^{2} \left\| u \right\|^2_{\Qx}\, .
\end{equation}
\smallskip

\textit{Step 2}.  Let 
\begin{equation}
f(\eta) = \left\| u(t+\eta^2,x+\eta^2 v, v) - u(t,x,v) \right\|_{L^2\left((0,\frac{\epsthird}{2})\times\mathbb{T}^d; L^2_\gamma\right)}^2\, . \notag
\end{equation}
We may write that 
\begin{align}
    f(\eta) &\lesssim \left\| \phi_{\delta \eta}u(t+\eta^2,x+\eta^2 v,v) - u(t+\eta^2,x+\eta^2 v,v) \right\|_{L^2\left((0,\frac{\epsthird}{2})\times\mathbb{T}^d; L^2_\gamma\right)}^2 \notag\\
    & \qquad + \left\| \phi_{\delta \eta}u(t+\eta^2,x+\eta^2 v,v) - \phi_{\delta \eta} u(t,x,v) \right\|_{L^2\left((0,\frac{\epsthird}{2})\times\mathbb{T}^d; L^2_\gamma\right)}^2\notag\\
    &\qquad + \left\| \phi_{\delta \eta}u(t,x,v) - u(t,x,v) \right\|_{L^2\left((0,\frac{\epsthird}{2})\times\mathbb{T}^d; L^2_\gamma\right)}^2 \, , \label{moll:split}
\end{align}
where the implicit constant is independent of $\eta$, $\delta$, and $u$. By Step 1 with $\zeta=\delta\eta$, the first and third terms are bounded by
$$ \delta^2 \eta^{2} \left\| u \right\|_{\Qx}^2 \, .$$
\smallskip

\textit{Step 3}. It remains to estimate the second term in~\eqref{moll:split}. For $\eta\in\left(0,\sqrt{\frac{\epsthird}{2}}\right)$ and $0\leq\tau\leq \eta^2$, consider
\begin{equation}\label{eq:Ftau:t}
    F(\tau) = \left\| \phi_{\delta \eta} u(t+\tau,x+\tau v,v) - \phi_{\delta \eta} u(t,x,v) \right\|^2_{L^2\left((0,\frac{\epsthird}{2})\times\mathbb{T}^d; L^2_\gamma\right)} \, .
\end{equation}
The term in question is $F(\eta^2)$. Since $F(0) = 0$, it suffices to estimate $ F'(\tau)$. We have that
\begin{align}
    F'(\tau) &= 2 {\iiint_{\left(0,\frac{\epsthird}{2}\right) \times\mathbb{R}^d\times\mathbb{T}^d}} \left( \phi_{\delta \eta}u(t+\tau,x+\tau v,v) - \phi_{\delta \eta} u(t,x,v) \right) \notag\\
    & \qquad \qquad \qquad \qquad \qquad \cdot D_t \left(\phi_{\delta \eta} u\right) (t+\tau,x+\tau v,v) \,dx\,d\gamma(v)\,dt \notag\\
    &= 2 \iiint_{\left(\tau,\frac{\epsthird}{2}+\tau\right)\times\mathbb{R}^d\times\mathbb{T}^d} \left(\phi_{\delta \eta}u(t,x,v) - \phi_{\delta \eta}u(t-\tau,x-\tau v,v) \right) \notag\\
    & \qquad \qquad \qquad \qquad \qquad \cdot D_t \left(\phi_{\delta \eta} u\right)(t,x,v)\, dx\,d\gamma(v)\,dt\, . \label{eq:Fprimetau}
\end{align}
From $[D_t,\phi_{\delta \eta}]u = \left[ \nabla_v, \phi_{\delta \eta} \right]u=0$, the assumption $\langle \partial_t u + v\cdot\nabla_x u \rangle_\gamma \equiv 0$, and our control of 
$$\left\|\partial_t u + v\cdot\nabla_x u \right\|_{L^2((0,\epsthird)\times\mathbb{T}^d; H^{-1}_\gamma)}\,, $$
we will achieve the desired estimate for $F'(\tau)$ if we can bound
$$ \nabla_v \left( \phi_{\delta \eta}u(t,x,v)-\phi_{\delta \eta}u(t-\tau,x-\tau v,v) \right) $$
in $L^2((\tau,\frac{\epsthird}{2}+\tau)\times\mathbb{T}^d;\ltwog)$. Notice that after obtaining these bounds, we apply the Cauchy-Schwarz inequality with a prefactor of $\varepsilon$ in front of one term and $\varepsilon^{-1}$ in front of the other in order to obtain \eqref{eq:interpolation:t}. The only non-trivial estimate comes when the $\nabla_v$ lands on the $x$ coordinate of the second term, which we may write out as
\begin{align}
    \int_{\mathbb{T}^d}& -\tau \nabla_x u \left(t-\tau, x+(\delta \eta)^3 \epshalf x'- \tau v,v\right) \phi(x')\,dx'\notag\\
    &= - \int_{\mathbb{T}^d} \frac{\tau}{(\delta \eta)^3 \epshalf } \nabla_{x'}u(t-\tau,x+(\delta \eta)^3 \epshalf x'-\tau v,v) \phi(x') \,dx'\notag\\
    &=\int_{\mathbb{T}^d}   \frac{\tau}{(\delta \eta)^3 \epshalf } u(t-\tau,x+(\delta \eta)^3 \epshalf x'-\tau v,v) \nabla_{x'} \phi(x') \,dx' \notag\\
    &=\int_{\mathbb{T}^d}   \frac{\tau}{(\delta \eta)^3 \epshalf } \left( u(t-\tau,x+(\delta \eta)^3 \epshalf x'-\tau v,v) - u(t-\tau,x-\tau v, v) \right) \nabla_{x'} \phi(x') \,dx' \, . \notag
\end{align}
But slight adjustments to the argument from Step 1 show that this is bounded in $L^2\left((\tau,\frac{\epshalf}{2}+\tau)\times\mathbb{T}^d;\ltwog\right)$ by a constant independent of $\delta$ times
$$ \frac{\tau}{(\delta \eta)^3 \epshalf } \delta\eta \left\| u \right\|_{\Qx} \leq \frac{1}{\delta^2 \epshalf } \left\| u \right\|_{\Qx}\, , $$
where here we have used the assumption that $\tau\leq \eta^2$. 
Using the Cauchy-Schwarz and Young inequalities to absorb the negative powers of $\varepsilon$ and $\delta$ with the $L^2_{t,x}H^{-1}_\gamma$ norm concludes the proof. \end{proof}
% Appealing to \eqref{eq:energy:3:alt} and the choice of $\delta=T^{\sfrac{1}{2}}\varepsilon^{\sfrac{1}{6}}$, we may now bound \eqref{eq:Fprimetau} by 
% \begin{align}
%     F'(\tau) &\lesssim \left\| D_t u \right\|_{L^2((\tau,\sfrac{1}{2}+\tau)\times\mathbb{T}^d;H^{-1}_\gamma)} \left(\left\| \nabla_v u \right\|_{L^2((\tau,\sfrac{1}{2}+\tau)\times\mathbb{T}^d;\mathbb{R}^d)} + \frac{T}{\delta^2} \left\| u \right\|_{\Qx} \right)  \notag\\
%     &\leq T\varepsilon \left\| \nabla_v u \right\|^2_{L^2((\tau,\sfrac{1}{2}+\tau)\times\mathbb{T}^d;\mathbb{R}^d)} + T\varepsilon \left\| \nabla_v u \right\|_{L^2((\tau,\sfrac{1}{2}+\tau)\times\mathbb{T}^d;\mathbb{R}^d)} \frac{T}{\delta^2} \left\| u \right\|_{\Qx} \notag\\
%     & \lesssim T\varepsilon \left\| \nabla_v u \right\|^2_{L^2((\tau,\sfrac{1}{2}+\tau)\times\mathbb{T}^d;\mathbb{R}^d)} + \frac{T\varepsilon\cdot T^2}{\delta^4} \left\| u \right\|_{\Qx}^2 \, ,  \notag
% \end{align}
% concluding the proof.

We may now state and prove the following proposition. As with the interpolation, in the case that $\langle \partial_t u + v\cdot\nabla_x u \rangle_\gamma \neq 0$, one could adjust the statement of the second inequality to include the necessary $L^2_{t,x}L^2_\gamma$ norm of $u$.
\begin{proposition}[H\"ormander inequality]\label{p.hormander.witht}
There exists $C(d) < \infty$ (not depending on $\ep$) such that for every smooth function $u$ satisfying $\langle \partial_t u + v\cdot\nabla_x u \rangle_\gamma \equiv 0$, we have
\begin{align}\label{e.hormander.t.eps}
    \left\| u \right\|_{Q_{\nabla_x}^{\sfrac{1}{3},\varepsilon}} &\leq C \left( \varepsilon^{\sfrac{1}{2}} \left\| \nabla_v u \right\|_{L^2\left((0,{\epsthird})\times\mathbb{T}^d;L^2_\gamma\right)} + \left\| u \right\|_{\Qvxteps} \right) \notag\\
    &\leq C \left( \varepsilon^{\sfrac{1}{2}}\left\| \nabla_v u \right\|_{L^2\left((0,{\epsthird})\times\mathbb{T}^d;L^2_\gamma\right)} + \varepsilon^{-\sfrac{1}{2}} \left\| \partial_t u + v\cdot\nabla_x u \right\|_{L^2\left((0,{\epsthird})\times\mathbb{T}^d;H^{-1}_\gamma\right)}  \right) \, .
\end{align}
\end{proposition}
\begin{proof}[Proof of Proposition~\ref{p.hormander.witht}]
Set $g(t,x,v)=f(t,x,v)\gamma^{\sfrac{1}{2}}(v)$, and choose $\eta^2\in(0,\varepsilon^{-\sfrac{1}{3}}]$ and $x'\in\mathbb{S}^{d-1}$. Then we may write that
\begin{align} \notag
    &\left\| f(t,x+\epshalf\eta^3x',v) - f(t,x,v) \right\|_{L^2\left((0,\frac{\epsthird}{2})\times\mathbb{T}^d;L^2_\gamma\right)} \\
    &\qquad = \left\| g(t,x+\epshalf\eta^3x',v) - g(t,x,v) \right\|_{L^2\left((0,\frac{\epsthird}{2})\times\mathbb{T}^d;L^2(\mathbb{R}^d)\right)} \, , \notag
\end{align}
and
\begin{align}
   g(t,x+\eta^3\varepsilon^{\sfrac{1}{2}}&x',v) - g(t,x,v)\label{e.hormander.string.t} \\
    &= g(t,x+\eta^3 \varepsilon^{\sfrac{1}{2}} x',v) - g(t,x+\eta^3\varepsilon^{\sfrac{1}{2}}x',v-\eta\varepsilon^{\sfrac{1}{2}}x') \notag\\
    &\qquad+ g(t,x+\eta^3\varepsilon^{\sfrac{1}{2}}x',v-\eta\varepsilon^{\sfrac{1}{2}}x') \notag\\
    &\qquad \qquad - g(t+\eta^2,x+\eta^3\varepsilon^{\sfrac{1}{2}}x'+\eta^2(v-\varepsilon^{\sfrac{1}{2}}\eta x'),v-\eta\varepsilon^{\sfrac{1}{2}} x') \notag\\
    &\qquad + g(t+\eta^2,x+\eta^2v,v-\eta\varepsilon^{\sfrac{1}{2}} x') - g(t+\eta^2,x+\eta^2v,v) \notag\\
    &\qquad + g(t+\eta^2,x+\eta^2 v,v) - g(t,x,v) \, . \notag
\end{align}
Dividing by $\eta$, integrating in $L^2\left((0,\frac{\epsthird}{2})\times\mathbb{T}^d; L^2(\mathbb{R}^d)\right)$, and appealing to \eqref{e.weighted.difference} as in the time-independent case yields that
\begin{align}
    &\frac{1}{\eta}\left\| f (t,x+\epshalf\eta^3x',v)-f(t,x,v) \right\|_{L^2\left((0,{\epsthird})\times\mathbb{T}^d;L^2_\gamma\right)} \notag\\
    &\qquad \lesssim \varepsilon^{\sfrac{1}{2}} \left\| \nabla_v f \right\|_{L^2\left((0,{\epsthird})\times\mathbb{T}^d;L^2_\gamma\right)} +  \left\| f \right\|_{\Qvxt}\, .  \notag
\end{align}
For the other half of the time interval, it is easy to rewrite \eqref{e.hormander.string.t} with a backwards difference in the $\partial_t+v\cdot\nabla_x$ direction by first adding $\eta\epshalf x'$ in the $v$ variable and then subtracting $\eta^2$ in the $t$ variable and $\eta^2(v+\epshalf \eta x')$ in the $x$ variable. Arguing as for the forward differences produces an identical estimate. Then using Lemma~\ref{lem:interpolation:t} {and absorbing the $\left\| f \right\|_{\Qx}^2$ factor required to bound $\left\| f \right\|_{\Qvxt}$ from the right-hand side onto the left-hand side gives the result.} 
\end{proof}

\begin{remark}\label{rem.poincare.txv}
From the embedding $\Qx \hookrightarrow {L^2\left((0,{\epsthird})\times\mathbb{T}^d;L^2_\gamma\right)}$ for functions with vanishing $x$-mean $\langle u \rangle(t,v) = \int_{\T^d} u(t,x,v) \,dx $ (see, for example, \cite{abn21}), we obtain the following $\varepsilon$-dependent Poincar\'e inequality:
\begin{equation}\label{eq:poincare:t:eps}
    \left\| u \right\|_{{L^2\left((0,{\epsthird})\times\mathbb{T}^d;L^2_\gamma\right)}} \leq C \varepsilon^{-\sfrac{1}{6}} \left\| u \right\|_{\Qx} \, .
\end{equation}
Note that to obtain this inequality, we have rescaled out the factors of $\varepsilon$ used in the finite differences of the $\Qx$ norm and then appealed to an $\varepsilon$-independent function space embedding.
\end{remark}

\begin{remark}[Regularity in time]
\label{e.sometea}
By an interpolation argument, the result of Proposition~\ref{p.hormander.witht} implies some time regularity for a function $f\in H^1_\kin(V)$ for $V=(0,\epsthird)\times\mathbb{T}^d$. Indeed, by the definition of the norm~$\left\| \cdot\right\|_{H^1_{\kin}}$, we have that 
\begin{equation*}
\left\| f \right\|_{L^2\left((0,{\epsthird})\times\mathbb{T}^d;H^1_\gamma\right)}
\leq 
\left\| f \right\|_{H^1_{\kin}\left((0,\epsthird)\times\T^d\right)} \, .
\end{equation*}
By interpolation and~\eqref{e.hormander.t.eps}, for every $\theta \in [0,1]$ and $\alpha\in \left[0,\tfrac13\right)$,
\begin{equation*}
\left\| f \right\|_{L^2\left((0,\epsthird); H^{\theta\alpha} (\T^d;H^{1-2\theta}_\gamma)\right)}
\leq 
C  \left\| f \right\|_{H^1_{\kin}\left((0,\epsthird)\times\T^d\right)} \, .
\end{equation*}
We also have, by~\eqref{e.hormander.t.eps}, for any $\alpha \in \left[0,\tfrac13\right)$,
\begin{align*}
\lefteqn{
\left\| f \right\|_{H^1\left((0,\epsthird);H^{\alpha-1}(\T^d;H^{-1}_\gamma)\right)}
} \quad & 
\\ &
\leq
\left\| f \right\|_{L^2\left((0,\epsthird);H^{\alpha-1}(\T^d;H^{-1}_\gamma)\right)}
+
\left\| \partial_t f \right\|_{L^2\left((0,\epsthird);H^{\alpha-1}(\T^d;H^{-1}_\gamma)\right)}
\\ & 
\leq 
\left\| f \right\|_{L^2\left((0,\epsthird);L^2(\T^d;H^{-1}_\gamma)\right)}
+
\left\| \partial_t f - v\cdot \nabla_x f \right\|_{L^2\left((0,\epsthird);L^2(\T^d;H^{-1}_\gamma)\right)} \notag\\
&\qquad \qquad +
\left\| v\cdot \nabla_x f \right\|_{L^2\left((0,\epsthird);H^{\alpha-1}(\T^d;H^{-1}_\gamma)\right)}
\\ &
\leq 
C \left\| f \right\|_{H^1_{\kin}\left((0,\epsthird)\times\T^d\right)} \, .
\end{align*}
By interpolation of the previous two displays, we obtain, for any $\theta,\sigma\in [0,1]$ and $\alpha \in \left[0,\tfrac 13\right)$, 
\begin{equation}
\label{e.regularity.sometea}
\left\| f \right\|_{H^\sigma\left((0,\epsthird); H^{\theta\alpha - \sigma(1-\al + \theta\alpha)} (\T^d;H^{1-2(\theta+\sigma-\theta\sigma)}_\gamma)\right)}
\leq
C \left\| f \right\|_{{H^1_{\kin}\left((0,\epsthird)\times\T^d\right)}} \, .
\end{equation}
Each of the constants~$C$ above depends only on~$(\alpha,d)$. Note that all three exponents can be made simultaneously positive, for example taking $\alpha = \theta = \frac14$ and $\sigma = \frac1{32}$ yields 
\begin{equation}
\label{e.allthreepositive}
\left\| f \right\|_{H^{\sfrac{1}{32}}\left((0,\epsthird); H^{\sfrac{1}{32}} (\T^d;H_\gamma^{\sfrac{7}{16}})\right)}
\leq
C \left\| f \right\|_{H^1_{\kin}\left((0,\epsthird)\times\T^d\right)} \, .
\end{equation}
\end{remark}

By~\eqref{e.allthreepositive} and an argument very similar to the proof of Proposition~\ref{p.compactembed}, which we omit, we obtain the following compact embedding statement. 

\begin{proposition}[{Compact embedding of $H^1_{\kin}$ into $L^2$}]
For any bounded $C^1$ domain $V\subseteq \R\times \Rd$ or cylindrical product $I \times U$ where $U$ is a bounded $C^1$ domain,
the inclusion map $H^1_{\kin}(V) \hookrightarrow L^2(V;L^2_\gamma)$ is compact. 
\label{p.compactembed.witht}
\end{proposition}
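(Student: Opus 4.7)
The plan is to mimic the proof of Proposition~\ref{p.compactembed} step by step, replacing the operator $v\cdot\nabla_x$ by $L:=\partial_t - v\cdot\nabla_x$, the domain $U\subseteq\Rd$ by $V\subseteq\R\times\Rd$, and the ``hypoelliptic direction'' $v\cdot\n_U$ at the boundary by $\omv\cdot\n_V(t,x)$. Fix $\varepsilon>0$ and a bounded sequence $\{f_n\}\subset H^1_\kin(V)$; by a diagonal argument it suffices to extract a subsequence whose pairwise $L^2(V;L^2_\gamma)$ differences are eventually $\le\varepsilon$. The argument proceeds through three successive cutoffs followed by an interior compactness step.

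First, using the logarithmic Sobolev inequality in the form~\eqref{e.HolderandLSI} (applied on $V\times\Rd$ rather than $U\times\Rd$), choose $v_0$ large enough and $\delta>0$ small enough that, uniformly over the sequence,
\begin{equation*}
\left\| f \indc_{\{|v|\ge v_0\}} \right\|_{L^2(V;L^2_\gamma)}
+
\left\| f \indc_{\{|\omv\cdot \n_V|<\delta\}} \right\|_{L^2(V;L^2_\gamma)}
\leq \frac{2\varepsilon}{3}\left\| f \right\|_{H^1_{\kin}(V)},
\end{equation*}
exactly as in Steps~1--2 of the proof of Proposition~\ref{p.compactembed}. The second cutoff uses that the set $\{|\omv\cdot\n_V|<\delta\}$ has Orlicz norm $\to 0$ as $\delta\to 0$ (intersecting with $\{|v|\le v_0\}$ makes this elementary, but the $L_{F^*}$ estimate avoids any need for that intersection).

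The main obstacle is the \emph{channel flush} step, analogous to Step~3 in the proof of Proposition~\ref{p.compactembed}: showing that, for every $\delta>0$, there exists $\theta>0$ with
\begin{equation*}
\left\| f \indc_{\{|\omv\cdot \n_V|\ge \delta\}} \indc_{\{\dist((t,x),\partial V)<\theta\}} \right\|_{L^2(V;L^2_\gamma)}
\leq \frac{\varepsilon}{3}\left\| f \right\|_{H^1_{\kin}(V)}.
\end{equation*}
Here we extend $\n_V=(\n_V^{t},\n_V^{x})$ Lipschitz-continuously to a neighborhood of $\overline V$, define $\varphi(t,x):=-\eta(\dist((t,x),\partial V))$ with $\eta$ as in the hypoelliptic argument so that $L\varphi = -\omv\cdot\n_V$ in a $\theta$-neighborhood of $\partial V$, and set $\psi_\pm(t,x,v):=\chi((\omv\cdot\n_V(t,x))_\pm)$. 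Since $\varphi$ vanishes on $\partial V$, integration by parts in $L$ against $(t,x,v)\mapsto \varphi\psi_\pm f$ (using the density result Proposition~\ref{p.density.kin}) yields
\begin{equation*}
\int_{V\times\Rd} \varphi f \psi_\pm \, Lf \, dt\,dx\,d\gamma
= -\tfrac12\!\int_{V\times\Rd} f^2 \psi_\pm \, L\varphi \, dt\,dx\,d\gamma - \tfrac12\!\int_{V\times\Rd} f^2 \varphi \, L\psi_\pm \, dt\,dx\,d\gamma
\end{equation*}
with no boundary contribution. The left side is controlled by $C\theta\delta^{-1}\|f\|_{H^1_\kin(V)}^2$, as $\|\varphi f \psi_\pm\|_{L^2(V;H^1_\gamma)}\le C\theta\delta^{-1}\|f\|_{L^2(V;H^1_\gamma)}$ by the chain-rule computation of Lemma~\ref{l.trace} (Step~1), and $\|Lf\|_{L^2(V;H^{-1}_\gamma)}\le\|f\|_{H^1_\kin(V)}$. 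The $L\psi_\pm$-error introduces terms of order $|v|$ and $|v|^2$ (from $v\cdot\nabla_x(\omv\cdot\n_V)$), which are absorbed by~\eqref{e.extrav2integrab}. Isolating the main term involving $L\varphi=-\omv\cdot\n_V$ on $U_\theta$ and summing the $\pm$ contributions then gives the claimed bound after taking $\theta=c\varepsilon^2\delta^2$.

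Once these three cutoffs are in place, the problem is reduced to showing that the sequence is precompact in $L^2(V_\theta \times B_{v_0})$ where $V_\theta:=\{(t,x)\in V:\dist((t,x),\partial V)\ge\theta\}$. For this we multiply each $f_n$ by a smooth spatio-temporal cutoff supported slightly inside $V$ and by a velocity cutoff supported in $B_{v_0+1}$, verify directly that the $H^1_\kin(\R\times\Rd)$ norm of the product is controlled by $\|f_n\|_{H^1_\kin(V)}$ (using the same product-rule bounds as in Step~1 of Lemma~\ref{l.trace}), and then invoke the joint fractional regularity estimate~\eqref{e.allthreepositive}. The compact embedding of $H^{1/32}(\R;H^{1/32}(\Rd;H^{7/16}_\gamma(B_{v_0+1})))$ into $L^2$ on any bounded domain (see e.g.~\cite[Theorem~2.32]{AF}) then supplies the desired convergent subsequence, completing the proof.
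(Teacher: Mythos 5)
Your proposal is correct and is essentially the argument the paper intends: the paper omits the proof, saying only that it follows from \eqref{e.allthreepositive} together with an argument very similar to that of Proposition~\ref{p.compactembed}, and your three cutoffs (large velocity, near-degenerate set $\{|\omv\cdot\n_V|<\delta\}$, boundary channel flush) followed by the interior compactness step via \eqref{e.allthreepositive} fill in exactly that outline. The only slip is a harmless sign: with $\varphi=-\eta(\dist(\cdot,\partial V))$ one has $L\varphi=+\omv\cdot\n_V$ rather than $-\omv\cdot\n_V$ near $\partial V$, which does not affect the estimates since only the bound $|L\varphi|\ge\delta$ and the constancy of the sign of $L\varphi$ on $\supp\psi_\pm$ are used.
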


\subsection{Well-posedness of the Cauchy problem}

\begin{proposition}[Solvability of the kinetic Fokker-Planck equation]
\label{pro:solvabilitykinetic}
Let $T \in (0,+\infty]$,  $f_{\rm in} \in L^2_m$, and $g^*  \in L^2(\T^d \times (0,T);H^{-1}_\gamma)$. Under Assumption~\ref{a.conserv},
there exists a unique solution
\begin{equation}
    f \in C([0,T];L^2_m(\T^d \times \R^d)) \cap H^1_\kin((0,T)\times\T^d)
\end{equation}
to the kinetic Fokker-Planck equation \eqref{e.theotherpde} with initial data $f_{\rm in}$ and forcing term $g^*$.
\end{proposition}

\begin{proof}
 %Let $\R_+ = (0,+\infty)$.
 Let $T \in (0,+\infty]$. Let $f_{\rm in} \in L^2_m$ and $g^*  \in L^2\left((0,T);L^2_\sigma(\T^d;H^{-1}_\gamma)\right))$.
A function $g$ solves the kinetic Fokker-Planck equation if and only if $f(t,x,v) = g(t,x,v)e^t$ solves
\begin{equation}
    \label{eq:penalizedPDEkinetic}
    \p_t f  + (v \cdot \nabla_x + \b \cdot \nabla_v)f + f = f^* + \varepsilon (\Delta f - v \cdot \nabla_v f) \, ,
\end{equation}
where $f^* = e^t g^*$.
We solve~\eqref{eq:penalizedPDEkinetic} on $ (0,T)\times \T^d \times \R^d$ by applying Lemma~\ref{lem:lionslaxmilgram} with an appropriate functional setup:
\begin{enumerate}
\item the \emph{test function space}
\begin{equation}
    \label{eq:testfunctionspacekinetic}
    \Phi = C^\infty_0(\T^d \times \R^d \times [0,T))
\end{equation}
with inner product
\begin{equation}
    \label{eq:sameinnerproductkinetic}
    (\phi,\psi) = \int_0^T \int_{\T^d\times\R^d} \nabla_v \phi \cdot \nabla_v \psi \, dm \, dt + \int_0^T  \int_{\T^d\times\R^d} \phi \psi  \, dm \, dt \, ,
\end{equation}
    \item the \emph{solution space}
\begin{equation}\notag
    H = L^2(0,T;L^2_\sigma(\T^d;H^1_\gamma))
\end{equation}
with inner product~\eqref{eq:sameinnerproductkinetic},
\item the \emph{bilinear form}
\begin{align}\notag
    E(h,\phi) &= \varepsilon \int_0^T  \int_{\T^d\times\R^d } \nabla_v h \cdot \nabla_v \phi \, dm \, dt + \int_0^T  \int_{\T^d\times\R^d } h\phi \, dm \, dt \notag\\
    &\qquad - \int_0^T  \int_{\T^d\times\R^d } h (\p_t + v \cdot \nabla_x + \b \cdot \nabla_v) \phi \, dm \, dt \, , \notag
\end{align}
\item and the \emph{linear functional}
\begin{equation}\notag
    L\phi = \int_{\T^d \times \R^d} f_{\rm in} \phi(x,v,0) \, dm + g^*(\phi) \, .
\end{equation}
\end{enumerate}

As before, in the Kramers equation, one may verify that $E$ is continuous~\eqref{eq:bilinearformcont} on $H$ for each fixed $\phi \in \Phi$. We now verify coercivity~\eqref{eq:bilinearformcoer} and mention two essential new features: (i) the initial data $f_{\rm in}$ is built into the linear function $L$, and (ii) test functions $\phi \in \Phi$ vanish at $t = T$ but are not required to vanish at $t = 0$ (which is necessary for them to `detect' the initial data). After integrating by parts in all variables, we have
\begin{align}
    E(\phi,\phi) &= \varepsilon \int_0^T  \int_{\T^d\times\R^d } |\nabla_v \phi|^2 \, dm \, dt+ \int_0^T  \int_{\T^d\times\R^d } |\phi|^2 \, dm \, dt + \frac{1}{2} \int_{\T^d \times \R^d} |\phi(x,v,0) |^2 \, dm \,\notag\\
    &\geq \varepsilon (\psi,\psi)_H \, . \notag
\end{align}
Lemma~\ref{lem:lionslaxmilgram} generates a weak solution $f \in H$ to $E(f,\phi) = L\phi$ for all $\phi \in \Phi$. In particular, choosing $\phi \in \Phi$ that additionally vanish near $t=0$ guarantees that the PDE~\eqref{eq:penalizedPDEkinetic} is satisfied in the sense of distributions. From the PDE itself, we recover that $f \in H^1_\kin(\T^d \times (0,T))$ and, in particular, $f \in C([0,T];L^2_\sigma(\T^d;L^2_\gamma))$; see Lemma~\ref{l.continuityL2}. This is enough regularity to justify that the initial data is $f_{\rm in}$ and the basic energy estimate which guarantees uniqueness. \end{proof}
 % da thought a bit more coudl be added

We do not include a proof of the following statement in this paper, since the argument is a close adaptation of the one of Theorem~\ref{t.interior.regularity.both}. We denote~$V_r := (-r,r)\times B_r$ and by $\nabla_{t,x}$ the full gradient in~$t$ and~$x$, that is, $\nabla_{t,x} = (\partial_t,\nabla_x)$.

\begin{proposition}[Interior regularity, kinetic Fokker-Planck]
\label{p.interior.regularity.both.witht}
Let~$k \in\N$, $r \in (0,\infty)$, and $\b \in C^{k-1,1}(V_{r} \times \Rd;\Rd)$. There exists a constant~$C<\infty$ depending on
\begin{equation*}
\left(d,k,r, \left\| \b \right\|_{C^{k-1,1}(V_{r} \times \Rd;\Rd)} \right) 
\end{equation*}
such that, for every~$f \in H^1_\kin(V_r)$ and~$f^* \in L^2(V_r;H^{-1}_\gamma)$ satisfying
\begin{equation} 
\label{e.intreg.both.witht}
\partial_t f -\Delta_v f + v\cdot \nabla_v f + v\cdot \nabla_x f + \b \cdot \nabla_v f = f^* \quad \mbox{in} \ V_r \times \Rd \, ,
\end{equation}
the following holds: If $\p^\alpha f^* \in L^2(B_r;H^{-1}_\gamma)$ for all multi-indices $\alpha \in \N \times \N^d \times \N^d$ satisfying $|\alpha| \leq k$, then we have $\p^\alpha f\in H^1_{\rm kin}\left(V_{r/2}\right)$ and the estimate
\begin{equation*}
\left\| \p^\alpha f \right\|_{H^1_{\rm kin}\left(V_{r/2}\right)}
\leq 
C\left( 
\left\| f - \left(f\right)_{V_{r}} \right\|_{L^2 (V_{r};L^2_\gamma)}
+
\sum_{|\beta| \leq k}
\left\| \p^\beta \tilde{f}^* \right\|_{L^2(V_r;H^{-1}_\gamma)}
\right).
\end{equation*}
for all multi-indices $\alpha \in \N \times \N^d \times \N^d$ satisfying $|\alpha| \leq k$.
\end{proposition}

\subsection{Exponential decay in time}
\label{ss.decay}
For each bounded interval $I = (I_-,I_+)\subset \R$ and bounded $C^1$ domain $U$, we denote by $H^1_{\kin,||}(I\times U)$ the closure in $H^1_\kin(I\times U)$ of the set of smooth functions which vanish on $I\times \partial_\hyp U$. Note that in particular, we allow the trace of $f \in H^1_{\kin,||}(I\times U)$ on the initial time slice $\{I_-\}\times U$ to be non-zero. In this section, we show that a solution to the kinetic Fokker-Planck equation with zero right-hand side and belonging to $H^1_{\kin,||}(I\times U)$ decays to zero exponentially fast in time. We start with a preliminary classical lemma.
\begin{lemma}[continuity in $L^2$]
Every function in $H^1_{\kin,||}(I\times U)$ can be identified (up to a set of null measure) with an element of $C(\bar I;L^2(U;L^2_\ga))$.
\label{l.continuityL2}
\end{lemma}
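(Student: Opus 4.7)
The plan is a Lions--Magenes-style density-plus-energy-identity argument. By the definition of $H^1_{\kin,||}(I\times U)$, fix a sequence of smooth functions $(f_n) \subset C^\infty(\bar V \times \Rd)$ vanishing on $I \times \partial_\hyp U$ with $f_n \to f$ in $H^1_\kin(V)$. Each $f_n$ belongs to $C(\bar I; L^2(U;L^2_\ga))$ by smoothness, so it suffices to show that $(f_n)$ is Cauchy in this Banach space; the uniform limit will then furnish the required continuous representative of $f$.

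For the difference $g := f_n - f_m$, which is smooth and still vanishes on $I \times \partial_\hyp U$, the integration-by-parts formula from Proposition~\ref{p.ibp.kin} (applied in the spatial variable only) yields, for every $I_- \le s \le t \le I_+$,
\begin{equation*}
\|g(t)\|_{L^2(U;L^2_\ga)}^2 - \|g(s)\|_{L^2(U;L^2_\ga)}^2
= 2\int_s^t \la g, (\partial_\tau - v\cdot \nabla_x) g\ra_{H^1_\ga, H^{-1}_\ga}\, d\tau
- \int_s^t \int_{\{v\cdot\n_U<0\}} g^2 |v\cdot\n_U|\, dx\, d\ga\, d\tau.
\end{equation*}
The boundary integrand vanishes on $\{v\cdot\n_U > 0\}$ since $g$ does there, so the remaining boundary term is non-positive. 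Combined with Cauchy--Schwarz applied to the duality pairing, this produces the forward dissipative estimate
\begin{equation*}
\|g(t)\|_{L^2(U;L^2_\ga)}^2 \le \|g(s)\|_{L^2(U;L^2_\ga)}^2 + \|g\|_{H^1_\kin(V)}^2, \qquad I_-\le s\le t\le I_+.
\end{equation*}
Taking the essential infimum over $s\in[I_-, t]$ and using the trivial bound $\essinf_{s \in [I_-,t]} \|g(s)\|_{L^2(U;L^2_\ga)}^2 \le (t-I_-)^{-1}\|g\|_{L^2(V;L^2_\ga)}^2$ then gives
\begin{equation*}
\sup_{t\in[I_-+\delta,I_+]}\|g(t)\|_{L^2(U;L^2_\ga)}^2 \le \delta^{-1}\|g\|_{L^2(V;L^2_\ga)}^2 + \|g\|_{H^1_\kin(V)}^2
\end{equation*}
for every $\delta > 0$. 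Hence $(f_n)$ is Cauchy in $C([I_-+\delta, I_+]; L^2(U; L^2_\ga))$ for every $\delta > 0$, providing a continuous representative of $f$ on the half-open interval $(I_-, I_+]$.

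The main obstacle is extending continuity up to the left endpoint $t = I_-$, where the averaging bound in the third display above degenerates. The remedy is to refine the approximating sequence: instead of arbitrary smooth approximations, build each $f_n$ from a right-sided Steklov time average of $f$, suitably modified so as to preserve the vanishing on $I \times \partial_\hyp U$. With such a specific construction, the initial slices $(f_n(I_-, \cdot, \cdot))_n$ are Cauchy in $L^2(U;L^2_\ga)$ by a Bochner--Lebesgue differentiation argument applied to the map $s \mapsto f(I_- + s,\cdot,\cdot)$. Applying the forward estimate with $s = I_-$ then promotes the Cauchy property to all of $\bar I$, and a diagonal extraction yields the continuous representative in $C(\bar I; L^2(U;L^2_\ga))$ asserted in the lemma.
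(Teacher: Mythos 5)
Your Steps 1--3 are correct, and in one respect more careful than the paper's own proof: the energy identity only yields the \emph{one-sided} estimate $\|g(t)\|_{L^2(U;L^2_\ga)}^2\le\|g(s)\|_{L^2(U;L^2_\ga)}^2+2\|g\|_{L^2((s,t)\times U;H^1_\ga)}\|\partial_t g-v\cdot\nabla_x g\|_{L^2((s,t)\times U;H^{-1}_\ga)}$ for $s\le t$, because the lateral boundary term $\int_s^t\int_{\partial U\times\Rd}g^2\,(v\cdot\n_U)_-$ has a favorable sign only in the forward time direction; the paper asserts the corresponding two-sided bound with an absolute value on the left, which does not follow from dropping a nonnegative term. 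Your averaging over $s$ then correctly produces a continuous representative on $(I_-,I_+]$, with a constant that degenerates as $\delta\downarrow0$.

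The gap is Step 4, and it cannot be closed. The Bochner--Lebesgue differentiation theorem gives convergence of the Steklov averages $h^{-1}\int_{I_-}^{I_-+h}f(\tau,\cdot)\,d\tau$ only at Lebesgue points of $\tau\mapsto f(\tau,\cdot)$, hence at almost every $\tau$ but not necessarily at the prescribed endpoint $I_-$, so the claimed Cauchy property of the initial slices is unsubstantiated. More fundamentally, no refinement of the approximating sequence can succeed, because the initial trace is not controlled by the $H^1_\kin$ norm on the class of smooth functions vanishing on $I\times\partial_\hyp U$. Take $U$ convex, a point $x_*\in U$ with $\dist(x_*,\partial U)=3\eps$, set $v_*:=-\n_U(\bar x_*)$ for $\bar x_*$ the nearest boundary point, and let
\begin{equation*}
g_\eps(t,x,v):=a\,\phi\Bigl(\eps^{-1}\bigl(x+(t-I_-)v-x_*\bigr)\Bigr)\,\psi(v-v_*),
\end{equation*}
with $\phi$, $\psi$ fixed bumps and $\psi$ supported in a small ball. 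Then $g_\eps$ is smooth, satisfies $\partial_t g_\eps-v\cdot\nabla_x g_\eps=0$ exactly, vanishes on $I\times\partial_\hyp U$ (by convexity its lateral trace is supported in $\{v\cdot\n_U\le 0\}$), and its spatial support is transported out of $U$ by time $I_-+C\eps$. A direct computation gives $\|g_\eps\|_{H^1_\kin(I\times U)}^2\le C a^2\eps^{d+1}$ while $\|g_\eps(I_-,\cdot)\|_{L^2(U;L^2_\ga)}^2\ge c\,a^2\eps^{d}$, so the ratio blows up like $\eps^{-1}$ as $\eps\downarrow0$. Superposing countably many such bumps with disjoint supports and summable $H^1_\kin$ norms yields an element of $H^1_{\kin,||}(I\times U)$ with $\|f(t,\cdot)\|_{L^2(U;L^2_\ga)}\to\infty$ as $t\downarrow I_-$. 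The obstruction therefore lies in the statement at the closed left endpoint---and it equally affects the paper's argument, which rests on the unjustified two-sided bound---rather than merely in your method: what your argument establishes, and what appears to be actually provable here, is membership in $C((I_-,I_+];L^2(U;L^2_\ga))$.
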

\begin{proof}
If $f$ is a smooth function which vanishes on $I\times \partial_\hyp U$, then for every $t \in I$, we have
\begin{multline*}  %\label{e.}
\partial_t \|f(t,\cdot)\|_{L^2(U;L^2_\ga)}^2 + \int_{\partial U \times \Rd} f^2(t,x,v) (v \cdot \n_U(x))_+ \, dx \, d\gamma(v) 
\\
= 2 \int_{U\times \Rd} \Ll(f (\partial_t f + v\cdot \nabla_x f)\Rr)(t,x,v) \, dx \, d\ga(v) \, ,
\end{multline*}
where we recall that $(r)_+ := \max(0,r)$.
Since the second integral on the left side is nonnegative, we deduce that  for every $s , t \in I$,
\begin{equation*}  %\label{e.}
\Ll|\|f(t,\cdot)\|_{L^2(U;L^2_\ga)}^2 - \|f(s,\cdot)\|_{L^2(U;L^2_\ga)}^2\Rr| \le 2\|f\|_{L^2((s,t)\times U;H^1_\ga)} \, \|\partial_t f + v\cdot \nabla_x f\|_{L^2((s,t)\times U;H^{-1}_\ga)} \, ,
\end{equation*}
and thus, for a constant $C(I) < \infty$,
\begin{equation*}  %\label{e.}
\sup_{t \in \bar I} \|f(t,\cdot)\|_{L^2(U;L^2_\ga)}  \le C \|f\|_{H^1_{\kin}(I\times U)} \, .
\end{equation*}
For a general $f \in H^1_{\kin,||}(I\times U)$, there exists a sequence $(f_n)$ of smooth functions which vanish on $I\times \partial_\hyp U$ and such that $f_n$ converges to $f$ in $H^1_\kin(I\times U)$. It follows from the inequality above that $f_n$ converges to $f$ with respect to the $L^\infty(I;L^2(U;L^2_\ga))$ norm; in particular, $f \in C(\bar I;L^2(U;L^2_\ga))$. \end{proof}

We finally turn to the proof of Theorem~\ref{t.hypo.equilibrium}, which is restated in the following proposition. Notice that, by linearity, it suffices to prove the theorem in the case~$f^*=0$ and~$f_\infty=0$.

\begin{proposition}[Exponential decay to equilibrium]
\label{p.decay}
Let $U \subset \Rd$ be a bounded $C^1$ domain and $\b \in L^\infty(U\times \Rd)^d$. There exists $\lambda(\|\b\|_{L^\infty(U\times \Rd)},U,d) > 0$ such that, for every $T \in (0,\infty)$ and $f \in H^1_{\kin,||}((0,T)\times U)$ satisfying 
\begin{equation*}  %\label{e.}
\partial_t f -\Delta_v f + v \cdot \nabla_v f + v \cdot \nabla_x f + \b\cdot \nabla_v f = 0 \qquad \text{in } (0,T)\times U\times \Rd \, ,
\end{equation*}
we have, for every $t \in (0,T)$,
\begin{equation*}  %\label{e.}
\|f(t,\cdot)\|_{L^2(U;L^2_\ga)} \le 2\exp \Ll( -\lambda t \Rr)  \|f(0,\cdot)\|_{L^2(U;L^2_\ga)}  \, .
\end{equation*}
\end{proposition}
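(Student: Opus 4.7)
My plan is to prove this by imitating the classical dissipative estimate for the heat equation, with two key adjustments: replacing the time derivative by a finite difference (since $\partial_t f$ alone is not in a nice space), and replacing the elliptic Poincaré inequality by the kinetic one from Proposition~\ref{p.poincare.kin}. By linearity, it suffices to treat the case $f^* = 0$ and $f_\infty = 0$, so that $f \in H^1_{\kin,||}((0,T)\times U)$ solves the homogeneous equation with vanishing trace on $(0,T)\times \partial_\hyp U$. By Lemma~\ref{l.continuityL2}, the function $u(t) := \|f(t,\cdot)\|_{L^2(U;L^2_\ga)}^2$ is continuous on $[0,T]$, so everything below is legitimate.

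The first substantive step is the slab energy identity. For $0 \le s < s+h \le T$, set $V_h := (s,s+h)\times U$ and test the equation with $f$ on $V_h \times \Rd$. Using Lemma~\ref{l.continuityL2} to handle the time derivative, Gaussian integration by parts in $v$ for the dissipation, and integration by parts in $x$ for $-v\cdot \nabla_x f$ (which yields the nonnegative boundary term $-\tfrac 12 \int f^2 v\cdot \n_U \, dx\, d\gamma \ge 0$ since $f = 0$ on $\partial_\hyp U$), I obtain
\begin{equation*}
u(s+h) - u(s) + 2\|\nabla_v f\|_{L^2(V_h;L^2_\gamma)}^2 \le -2\int_{V_h\times \Rd} f\, \b\cdot \nabla_v f \, dx\, d\gamma\, dt.
\end{equation*}
Controlling the right-hand side by Young's inequality with a parameter $\delta > 0$ gives
\begin{equation*}
u(s+h) - u(s) + (2-\delta)\|\nabla_v f\|_{L^2(V_h;L^2_\gamma)}^2 \le \delta^{-1}\|\b\|_\infty^2\|f\|_{L^2(V_h;L^2_\gamma)}^2.
\end{equation*}

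The next step is to apply (an adaptation to thin cylinders of) the Poincaré inequality of Proposition~\ref{p.poincare.kin}(2), using that $f$ vanishes on the relatively open subset $(s,s+h)\times \partial_\hyp U$ of $\partial V_h \times \Rd$. This gives $\|f\|_{L^2(V_h;L^2_\gamma)} \le C_P\bigl(\|\nabla_v f\|_{L^2} + \|\partial_t f - v\cdot \nabla_x f\|_{L^2(V_h;H^{-1}_\gamma)}\bigr)$, where $C_P$ depends only on $U$ and $d$. Substituting the PDE $\partial_t f - v\cdot \nabla_x f = -\nabla_v^*\nabla_v f - \b\cdot \nabla_v f$ and using the elementary duality bound $\|\nabla_v^*\nabla_v f + \b\cdot \nabla_v f\|_{H^{-1}_\gamma} \le (1+\|\b\|_\infty)\|\nabla_v f\|_{L^2_\gamma}$ (obtained by expanding $\int\phi(\nabla_v^*\nabla_v f + \b\cdot \nabla_v f)\,d\gamma = \int \nabla_v f\cdot(\nabla_v\phi + \phi\b)\,d\gamma$), I arrive at the coercivity estimate
\begin{equation*}
\|\nabla_v f\|_{L^2(V_h;L^2_\gamma)}^2 \ge \mu\, \|f\|_{L^2(V_h;L^2_\gamma)}^2, \qquad \mu := \frac{1}{C_P^2(2+\|\b\|_\infty)^2}.
\end{equation*}

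Combining these two estimates with an appropriate choice of $\delta$ (optimally $\delta \sim \|\b\|_\infty\sqrt{\mu^{-1}}$) produces a constant $\lambda_0 = \lambda_0(\|\b\|_\infty,U,d) > 0$ such that
\begin{equation*}
u(s+h) - u(s) \le -\lambda_0 \int_s^{s+h} u(t)\, dt \qquad \text{for all } 0 \le s < s+h \le T.
\end{equation*}
This is the crucial ``differential inequality in integrated form''. I then conclude via a standard Gr\"onwall argument: define $\phi(t) := e^{\lambda_0 t}u(t)$, observe that the above estimate yields $\limsup_{h \downarrow 0}\,h^{-1}(\phi(s+h)-\phi(s)) \le 0$ for every $s$, and deduce from the continuity of $\phi$ that $\phi$ is non-increasing. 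Hence $u(t) \le u(0) e^{-\lambda_0 t}$, which is even stronger than the stated bound (the factor $2$ in~\eqref{e.expnn.conv} absorbs any looseness).

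The main obstacle is the absorption in the combining step: both the bad term $\delta^{-1}\|\b\|^2\|f\|^2$ and the good term $(2-\delta)\mu\|f\|^2$ have the same structure, and since $\mu \to 0$ as $\|\b\|_\infty \to \infty$ (because the PDE-based bound on $\|Pf\|_{H^{-1}_\gamma}$ carries a $(1+\|\b\|_\infty)$ factor), the choice of $\delta$ must be made carefully; the residual positive coefficient $\lambda_0$ will have an unflattering dependence on $\|\b\|_\infty$, but will remain strictly positive. A related technical point is justifying the Poincaré inequality on the non-$C^{1,1}$ cylindrical slab $V_h$; this should follow either by adapting the proof of Proposition~\ref{p.poincare.kin} directly to this geometry or by approximating by $C^{1,1}$ subdomains with a constant uniform in $h$.
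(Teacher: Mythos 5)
Your overall strategy---energy identity on a time slab plus a kinetic Poincar\'e inequality, with finite differences replacing $\partial_t$, then Gr\"onwall---is the same as the paper's, and the final Gr\"onwall step (upper Dini derivative of $e^{\lambda_0 t}u(t)$ nonpositive, hence nonincreasing) is sound. The genuine gap is the coercivity step: you apply Proposition~\ref{p.poincare.kin}(2) on the thin slab $V_h=(s,s+h)\times U$ and assert a constant $C_P$ depending only on $U$ and $d$, i.e.\ uniform as $h\downarrow 0$. This is not available. Proposition~\ref{p.poincare.kin} is proved only for $C^{1,1}$ domains (the paper explicitly states it does not know how to prove the Poincar\'e inequality for cylindrical domains $I\times U$ even at unit scale), and its constant depends on both the domain $V$ and the vanishing set $W$. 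Its proof passes through Lemma~\ref{l.est.L2.H-1}, i.e.\ solving $\nabla_{t,x}\cdot\f=h$ with $\f\in H^1_0(V)^{1+d}$ and $\|\f\|_{H^1(V)}\le C\|h\|_{L^2(V)}$, and the constant in that divergence problem degenerates as the domain flattens to a slab of height $h$; so neither ``adapting the proof directly to this geometry'' nor ``approximating by $C^{1,1}$ subdomains with a constant uniform in $h$'' can be expected to close the argument. The paper's way around this is to \emph{not} let the time window shrink: fix one $C^{1,1}$ domain $V$ with $\left[\tfrac14,\tfrac34\right]\times U\subset V\subset[0,1]\times U$, apply the Poincar\'e inequality on the translates $V_t=(t,0)+V$, and use the monotonicity of $t\mapsto\|f(t,\cdot)\|_{L^2(U;L^2_\ga)}$ together with the inclusion above to bound $\|f\|_{L^2(V_t;L^2_\ga)}^2$ from below by a constant times $\|f(t+1,\cdot)\|_{L^2(U;L^2_\ga)}^2$. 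This yields geometric decay along integer times, and monotonicity converts it to decay at all times at the price of the factor $2$ in the statement.

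A secondary point: even granting a uniform slab Poincar\'e inequality, your absorption does not produce a positive $\lambda_0$ for all $\b$. You need $(2-\delta)\mu>\delta^{-1}\|\b\|_\infty^2$ with $\mu=C_P^{-2}(2+\|\b\|_\infty)^{-2}$, and since $\delta(2-\delta)\le 1$ this forces $C_P(2+\|\b\|_\infty)\|\b\|_\infty<1$, i.e.\ smallness of $\|\b\|_\infty$; your claim that $\lambda_0$ ``will remain strictly positive'' does not follow from the inequalities you wrote. The cross term $\int f\,\b\cdot\nabla_v f$ should not be pitted against the Poincar\'e gain as a competing Young term; it is better fed into the $L^2(H^{-1}_\ga)$ control of $\partial_t f-v\cdot\nabla_x f$ via the equation (as in the derivation of~\eqref{e.no.cross.products}), so that the only quantity to be bounded below is $\|\nabla_v f\|^2$ itself.
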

\begin{proof}
For every $0 \le s < t$, we compute
\begin{equation*}  %\label{e.}
\frac 1 2 \Ll(\|f(t,\cdot)\|_{L^2(U;L^2_\ga)}^2 - \|f(s,\cdot)\|_{L^2(U;L^2_\ga)}^2 \Rr) \le - \|\nabla_v f\|_{L^2((s,t)\times U;L^2_\ga)}^2 \, .
\end{equation*}
In particular, 
\begin{equation}
\label{e.nonincr}
\mbox{the mapping $t \mapsto \|f(t,\cdot)\|_{L^2(U;L^2_\ga)}$ is nonincreasing.}
\end{equation}
Since 
\begin{equation*}  %\label{e.}
-\nabla_v^* \nabla_v f = \partial_t f + v\cdot \nabla_x f + \b\cdot \nabla_v f \, ,
\end{equation*}
we have
\begin{align*}
\lefteqn{
\|\partial_t f + v\cdot \nabla_x f \|_{L^2((s,t)\times U;H^{-1}_\ga)} 
} \qquad & 
\\ &
\leq \|\partial_t f + v\cdot \nabla_x f  + \b\cdot \nabla_v f\|_{L^2((s,t)\times U;H^{-1}_\ga)} + \|  \b\cdot \nabla_v f \|_{L^2((s,t)\times U;H^{-1}_\ga)} 
\\ &
\leq C \|\nabla_v f\|_{L^2((s,t)\times U;L^2_\ga)} \, ,
\end{align*}
and thus
\begin{align}  
\label{e.no.cross.products}
& -\Ll(\|f(t,\cdot)\|_{L^2(U;L^2_\ga)}^2 - \|f(s,\cdot)\|_{L^2(U;L^2_\ga)}^2\Rr) 
\\ & \qquad \notag
\geq 
\frac 1 C \Ll(\|\nabla_v f\|_{L^2((s,t)\times U;L^2_\ga)}^2 + \|\partial_t f + v\cdot \nabla_x f \|_{L^2((s,t)\times U;H^{-1}_\ga)}^2\Rr) \, .
\end{align}
We aim to appeal to Proposition~\ref{p.poincare.kin} to conclude. %Since we have only proved this proposition for Lipschitz domains, we find a Lipschitz domain $V \subset \R\times \Rd$ satisfying
We define
\begin{equation}  
\label{e.incl.V}
V := [0,1]\times U.
%\Ll[ \frac 1 4, \frac 3 4 \Rr] \times U \subset V \subset [0,1]\times U.
\end{equation}
For every $t \geq 0$, we write
\begin{equation*}  %\label{e.}
V_t := (t,0) + V = \{(t+s,x) \in \R\times \Rd \ : \ (s,x) \in V\}.
\end{equation*}
Inequality \eqref{e.no.cross.products} implies that, for every $t \ge 0$, 
\begin{align*}  %\label{e.}
& -\Ll(\|f(t+1,\cdot)\|_{L^2(U;L^2_\ga)}^2 - \|f(t,\cdot)\|_{L^2(U;L^2_\ga)}^2\Rr)
\\ & \qquad 
\geq 
\frac 1 C \Ll(\|\nabla_v f\|_{L^2(V_t;L^2_\ga)}^2 + \|\partial_t f - v\cdot \nabla_xf \|_{L^2(V_t; H^{-1}_\ga)}^2\Rr).
\end{align*}
Proposition~\ref{p.poincare.kin} yields that
\begin{equation*}  %\label{e.}
-\Ll(\|f(t+1,\cdot)\|_{L^2(U;L^2_\ga)}^2 - \|f(t,\cdot)\|_{L^2(U;L^2_\ga)}^2\Rr) \ge \frac 1C \|f\|_{L^2(V_t;L^2_\ga)}^2.
\end{equation*}
Using \eqref{e.nonincr} and \eqref{e.incl.V}, we deduce that 
\begin{equation*}  %\label{e.}
-\Ll(\|f(t+1,\cdot)\|_{L^2(U;L^2_\ga)}^2 - \|f(t,\cdot)\|_{L^2(U;L^2_\ga)}^2\Rr) \ge \frac 1C \|f(t+1,\cdot)\|_{L^2(U;L^2_\ga)}^2.
\end{equation*}
This implies exponential decay of the mapping $t \mapsto \|f(t,\cdot)\|_{L^2(U;L^2_\ga)}$ along integer values of~$t$, and we then obtain the conclusion of the proposition by using~\eqref{e.nonincr} once more. \end{proof}

\subsection{Enhancement}\label{ss.enhancement}

Finally, we prove Theorem~\ref{t.enhancement}. Recall that $f$ is assumed to be a solution to 
\begin{equation}\label{e.enhancement.redux}
    \partial_t f + v\cdot\nabla_x f = \varepsilon\left( \Delta_v f - v\cdot\nabla_v f \right)  \textnormal{  in }(0,\infty)\times\T^d\times\R^d \, .
    \end{equation}
\begin{proof}[Proof of Theorem~\ref{t.enhancement}]
After multiplying \eqref{e.enhancement.redux} by $f$ and integrating over $(0,\epsthird)\times\T^d\times\R^d$, we obtain the {a priori} estimates
\begin{align}
    \varepsilon \left\| \nabla_v f \right\|_{L^2(\left(0,\epsthird)\times\T^d\times\R^d\right)}^2 &\leq \left\| f_{\rm in} \right\|^2_{L^2(\T^d;L^2_\gamma)} - \left\| f(\varepsilon^{-\sfrac{1}{3}},\cdot,\cdot) \right\|_{L^2(\T^d;L^2_\gamma)}^2 \notag\\
    \varepsilon^{-1} \left\| \partial_t f + v\cdot\nabla_x f \right\|_{L^2(\left(0,\epsthird)\times\T^d(H^{-1}_\gamma)\right)}^2 &\lesssim \left\| f_{\rm in} \right\|^2_{L^2(\T^d;L^2_\gamma)} - \left\| f(\varepsilon^{-\sfrac{1}{3}},\cdot,\cdot) \right\|_{L^2(\T^d;L^2_\gamma)}^2 \, . \notag
\end{align}
Applying the inequality in \eqref{e.hormander.t.eps} from Proposition~\ref{p.hormander.witht}, which is justified since $\langle \partial_t f + v\cdot\nabla_x f \rangle_\gamma = \varepsilon\langle \Delta_v f - v\cdot\nabla_v f \rangle_\gamma \equiv 0$, we obtain that
\begin{equation}\notag
    \left\| f \right\|_{\Qx}^2 \lesssim \varepsilon \left\| \nabla_v f \right\|^2_{L^2\left((0,\epsthird)\times\T^d;L^2_\gamma\right)} \lesssim \left\| f_{\rm in} \right\|^2_{L^2(\T^d;L^2_\gamma)} - \left\| f(\varepsilon^{-\sfrac{1}{3}},\cdot,\cdot) \right\|_{L^2(\T^d;L^2_\gamma)}^2 \, .
\end{equation}
From \eqref{eq:poincare:t:eps} and the observation that the mean-zero in $x$ condition from \eqref{e.meanzero} is propagated forward in time, we then obtain that
\begin{align*}
    \left\| f \right\|^2_{L^2\left((0,\epsthird)\times\T^d;L^2_\gamma\right)} &\lesssim \varepsilon^{-\sfrac{1}{3}} \left\| f \right\|_{\Qx}^2 \notag\\
    &\lesssim \varepsilon^{\sfrac{2}{3}} \left\| \nabla_v f \right\|_{L^2\left((0,\epsthird)\times\T^d;L^2_\gamma\right)}^2 \notag\\
    &\lesssim \varepsilon^{-\sfrac{1}{3}} \left(\left\| f_{\rm in} \right\|^2_{L^2(\T^d;L^2_\gamma)} - \left\| f(\varepsilon^{-\sfrac{1}{3}},\cdot,\cdot) \right\|_{L^2(\T^d;L^2_\gamma)}^2 \right) \, .
\end{align*}
Translating in time and iterating this procedure yields exponential decay with rate $\exp(-c\varepsilon^{-\sfrac{1}{3}}t)$ along integer multiples of $\epsthird$, similarly to the proof of Proposition~\ref{p.decay}.  Applying \eqref{e.nonincr}, which holds as well for solutions to \eqref{e.enhancement.redux}, we obtain \eqref{e.enhancement}. \end{proof}

\begin{remark}
    \label{rmk:inprincipleonecan}
In principle, one can also incorporate a conservative $\b$ satisfying Assumption~\ref{a.conserv} into the enhancement estimate, since $[\b(x) \cdot \nabla_v , \p_{v_i}] = 0$ for all $i = 1, \hdots, d$.
\end{remark}

\subsection*{Acknowledgments} 
SA and JCM kindly thank Julia Brunken for pointing out their mistake in the first version of this paper.
DA was supported by NSF Postdoctoral Fellowship  Grant No. 2002023 and Simons Foundation Grant No. 816048.
SA was partially supported by NSF Grants DMS-1700329 and DMS-2000200. 
JCM was partially supported by the ANR grants LSD
(ANR-15-CE40-0020-03) and Malin (ANR-16-CE93-0003) and by the NSF grant DMS-1954357. SA and JCM were partially supported by a grant from the NYU-PSL Global Alliance. MN was partially supported by the NSF under Grant No. DMS-1928930 while participating in a program hosted by the Mathematical Sciences Research Institute during the spring 2021 semester, and by the NSF under Grant No. DMS-1926686 while a member at the Institute for Advanced Study.
\small
\bibliographystyle{abbrv}
\bibliography{hypoelliptic}

\end{document}